\newcommand{\andSep}{\,\,\,\text{ and }\,\,\,}
\newcommand{\axiomO}[1]{(O#1)}
\newcommand{\CatCu}{\ensuremath{\mathrm{Cu}}}
\newcommand{\CuSgp}{$\CatCu$-sem\-i\-group}
\newcommand{\soft}{{\rm{soft}}}
\newtheorem{lma}{Lemma}[section]
\newaliascnt{thmCt}{lma}
\newtheorem{thm}[thmCt]{Theorem}
\newaliascnt{corCt}{lma}
\newtheorem{cor}[corCt]{Corollary}
\newaliascnt{prpCt}{lma}
\newtheorem{prp}[prpCt]{Proposition}
\newtheorem*{thm*}{Theorem}
\newtheorem*{cor*}{Corollary}
\newtheorem*{prop*}{Proposition}
\theoremstyle{definition}
\newaliascnt{pgrCt}{lma}
\newtheorem{pgr}[pgrCt]{}
\newaliascnt{dfnCt}{lma}
\newtheorem{dfn}[dfnCt]{Definition}
\newaliascnt{rmkCt}{lma}
\newtheorem{rmk}[rmkCt]{Remark}
\newaliascnt{rmksCt}{lma}
\newaliascnt{exaCt}{lma}
\newtheorem{exa}[exaCt]{Example}
\newaliascnt{egCt}{lma}
\newaliascnt{qstCt}{lma}
\newtheorem{qst}[qstCt]{Question}
\newaliascnt{pbmCt}{lma}
\newaliascnt{cnjCt}{lma}
\newaliascnt{ntnCt}{lma}
\def\today{\number\day\space\ifcase\month\or   January\or February\or
   March\or April\or May\or June\or   July\or August\or September\or
   October\or November\or December\fi\   \number\year}
\newcommand{\vertiii}[1]{{\left\vert\kern-0.25ex\left\vert\kern-0.25ex\left\vert #1
\right\vert\kern-0.25ex\right\vert\kern-0.25ex\right\vert}}
\newcommand{\NN}{{\mathbb{N}}}
\newcommand{\CC}{{\mathbb{C}}}
\newcommand{\NNbar}{\overline{\NN}}
\DeclareMathOperator{\Cu}{Cu}
\DeclareMathOperator{\wdiv}{div}
\DeclareMathOperator{\mwdiv}{mdiv}
\DeclareMathOperator{\sdiv}{Div}
\newcommand{\ca}{$\mathrm{C}^*$-algebra}
\newcommand{\stHom}{${}^*$-homomorphism}
\newcounter{theoremintro}
\newaliascnt{thmIntroCt}{theoremintro}
\definecolor{RED}{rgb}{1,0,0}\definecolor{BLUE}{rgb}{0,0,1} 
\lstdefinelanguage{DIFcode}{ 
  moredelim=[il][\color{red}\sout]{\%DIF\ <\ }, 
  moredelim=[il][\color{blue}\uwave]{\%DIF\ >\ } 
} 
\lstdefinestyle{DIFverbatimstyle}{ 
	language=DIFcode, 
	basicstyle=\ttfamily, 
	columns=fullflexible, 
	keepspaces=true 
} 
\title{Nowhere scattered multiplier algebras}
\date{\today}
\author{Eduard Vilalta}
\address{Eduard Vilalta,
Departament de Matem\`{a}tiques,
Universitat Aut\`{o}noma de Barcelona,
08193 Bellaterra, Barcelona, Spain.}
\email{evilalta@mat.uab.cat}
\urladdr{www.eduardvilalta.com}
\thanks{The author was partially supported by MINECO (grant No.\ PID2020-113047GB-I00  and No.\ PRE2018-083419), and by the Departament de Recerca i Universitats de la Generalitat de Catalunya (grant No.\ 2021-SGR-01015).}
\begin{document}


\maketitle
\begin{abstract}
 We study sufficient conditions under which a nowhere scattered \ca{} $A$ has a nowhere scattered multiplier algebra $\mathcal{M}(A)$, that is, we study when $\mathcal{M}(A)$ has no nonzero, elementary ideal-quotients. In particular, we prove that a $\sigma$-unital \ca{} $A$ of
 \begin{itemize}
  \item[(i)] finite nuclear dimension, or
  \item[(ii)] real rank zero, or
  \item[(iii)] stable rank one with $k$-comparison,
 \end{itemize}
 is nowhere scattered if and only if $\mathcal{M}(A)$ is.
\end{abstract}

\section{Introduction}

The study of regularity properties of  multiplier algebras appears throughout the literature; see, for example, \cite{Ell74:DevMatAlg}, \cite{Zha90RieszDecomp}, \cite{Ror91IdMult}, and \cite{KafNgZha17StrCompMultiplier}. One notable instance of this is the study of pure infiniteness and, more concretely, of when a \ca{} has a purely infinite multiplier algebra; see \cite{LinSimpleCorAlg}, \cite{LinSimplePurInfCorAlg}, \cite{KurNgPer10MA}, and \cite{KafNgZha19PurInfMultAlg}. When this condition is relaxed to weak pure infiniteness (in the sense of \cite{KirRor02InfNonSimpleCalgAbsOInfty}) it was shown in \cite[Proposition~4.11]{KirRor02InfNonSimpleCalgAbsOInfty} that a \ca{} is weakly purely infinite if and only if its multiplier algebra is. In general, given a certain property $\mathcal{P}$, one can ask: If a \ca{} $A$ satisfies $\mathcal{P}$, when does $\mathcal{M}(A)$ satisfy $\mathcal{P}$?

In this paper we study the question above for the property of being \emph{nowhere scattered}. This notion ensures sufficient noncommutativity of the algebra, and can be characterized in a number of ways. As shown in \cite[Theorem~3.1]{ThiVil21arX:NowhereScattered}, a \ca{} $A$ is nowhere scattered if and only if it has no nonzero elementary ideal-quotients. This is in turn equivalent to no hereditary sub-\ca{} of $A$ admitting a one-dimensional representation. Every weakly purely infinite \ca{} is nowhere scattered (\cite[Example~3.3]{ThiVil21arX:NowhereScattered}) but, in contrast to the weakly purely infinite case, the multiplier algebra of a nowhere scattered \ca{} need not be nowhere scattered; see Examples \ref{exa:product} and \ref{exa:simple}.
 
 \begin{qst}\label{qst:NscaImpMANsca}
  Let $A$ be nowhere scattered. When is $\mathcal{M}(A)$ nowhere scattered?
 \end{qst}
 
 One of the motivations behind \autoref{qst:NscaImpMANsca} is the study of when a multiplier algebra has no characters, since such a property leads to important structure results. For example, it follows from \cite[Theorem~3.2]{MR3509135} that every element in a  nowhere scattered multiplier algebra can be written as the finite sum of commutators and products of two commutators. Moreover, knowing that $\mathcal{M}(A)$ is nowhere scattered would have implications on its unitary group; see \cite{ChandRobert}.
 
 Nowhere scatteredness of a \ca{} $A$ can also be characterized in terms of its Cuntz semigroup $\Cu (A)$, a powerful invariant for \ca{s} introduced in \cite{Cun78DimFct} and further developed in \cite{CowEllIva08CuInv}; see also \cite{AntPerThi18TensorProdCu},  \cite{AntPerRobThi18arX:CuntzSR1},  \cite{ThiVil22DimCu} and \cite{MR4310038}. Explicitly, it was shown in \cite[Theorem~8.9]{ThiVil21arX:NowhereScattered} that a \ca{} is nowhere scattered if and only if its Cuntz semigroup is \emph{weakly $(2,\omega )$-divisible}, a notion defined by Robert and R{\o}rdam in \cite{RobRor13Divisibility} to study when certain \ca{s} have characters; see \autoref{pgr:2OmegaDiv}. Consequently, the study of \autoref{qst:NscaImpMANsca} leads naturally to the study of divisibility properties of $\Cu (\mathcal{M}(A))$. The main divisibility properties at play in this case are weak $(2,\omega )$-divisibility and its bounded counterpart, known as \emph{weak $(m,n)$-divisibility}; see \autoref{sec:FinDiv}.

Although the \ca{s} from Examples \ref{exa:product} and \ref{exa:simple} are nowhere scattered (and thus have a $(2,\omega )$-divisible Cuntz semigroup), they both have \emph{unbounded divisibility}, that is, for every pair $m,n\in\NN$ there exists a Cuntz class that is not weakly $(m,n')$-divisible for any $n'\leq n$. Thus, we ask:
 
 \begin{qst}\label{qst:NSCaimpBound}
  Let $A$ be a nowhere scattered \ca{}. When does there exist $n\in\NN$ such that $[a]$ is weakly $(2,n)$-divisible for every $a\in A_+$?
  
  More generally, when can one find $m,n\in\NN$ such that $[a\otimes 1_m]$ is weakly $(2m,n)$-divisible for every $a\in A_+$?
 \end{qst}
 
 This question is studied in detail in \autoref{sec:BoundDiv}, where we provide a number of examples where one can answer \autoref{qst:NSCaimpBound} affirmatively. Further, we also show that having bounded divisibility is characterized by the soft part of the monoid; see \autoref{prp:BoundDivSoft}.
 
 We prove in \autoref{sec:NScaCorAlg} that \autoref{qst:NscaImpMANsca} has a positive answer whenever \autoref{qst:NSCaimpBound} does.
 
 \begin{thm}[\ref{thm:MainMA}]\label{thm:IntroMainMA}
 Let $A$ be a $\sigma$-unital \ca{}. Assume that for every orthogonal sequence $(a_i)_i$ of positive elements in $A$ there exist $m,n\in\NN$ such that $[a_i\otimes 1_m]$ is weakly $(2m, n)$-divisible for every $i$. Then $\mathcal{M}(A)$ is nowhere scattered.
 \end{thm}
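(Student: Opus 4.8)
The plan is to pass to the Cuntz semigroup and verify the criterion recalled in the introduction: by \cite[Theorem~3.1 and Theorem~8.9]{ThiVil21arX:NowhereScattered}, $\mathcal{M}(A)$ is nowhere scattered exactly when every element of $\Cu(\mathcal{M}(A))$ is weakly $(2,\omega)$-divisible. A preliminary observation is that the hypothesis already forces $A$ to be nowhere scattered: if some hereditary sub-\ca{} of $A$ carried a character $\chi$, then for any $a\in A_+$ with $\chi(a)>0$ the associated integer-valued dimension function $d_\chi$ satisfies $d_\chi([a])=1$; applying the hypothesis to the one-term orthogonal sequence $a$ gives classes $y_1,\dots,y_k$ with $2m\,y_i\le m[a]$ and $m[a]\le\sum_i y_i$, whence $d_\chi(y_i)\le \tfrac12$ forces $d_\chi(y_i)=0$ and then $m\le\sum_i d_\chi(y_i)=0$, a contradiction. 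Since $A$ is an ideal in $\mathcal{M}(A)$, this disposes of all classes of $\Cu(\mathcal{M}(A))$ that already live in $\Cu(A)$, and (after stabilising $A$, which preserves both $\sigma$-unitality and the hypothesis) it remains to divide the class of a single $T\in\mathcal{M}(A)_+$. The genuine difficulty is that when $T\notin A$ its class is not detected by $A$, i.e.\ $T$ has a nonzero image in the corona $\mathcal{M}(A)/A$.

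For the decomposition I would fix an approximate unit $(e_n)_n$ of $A$ with $e_{n+1}e_n=e_n$ and set $f_\ell=(e_\ell-e_{\ell-1})^{1/2}\in A$, so that $\sum_\ell f_\ell^2=1$ strictly and $f_\ell f_{\ell'}=0$ whenever $|\ell-\ell'|\ge 2$. The pieces $b_\ell:=f_\ell T f_\ell\in A_+$ then satisfy $b_\ell\sim T^{1/2}f_\ell^2T^{1/2}\precsim T$, and $b_\ell b_{\ell'}=0$ for $|\ell-\ell'|\ge 2$; thus $(b_{2k})_k$ and $(b_{2k+1})_k$ are two orthogonal sequences in $A_+$. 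Applying the hypothesis to each of them yields a single pair $m,n\in\NN$ such that $[b_\ell\otimes 1_m]=m[b_\ell]$ is weakly $(2m,n)$-divisible for all $\ell$.

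Next I would try to assemble these local, uniform divisions into a global one. For each $\ell$ and each $b_\ell'\ll b_\ell$ the amplified divisibility provides classes $y^\ell_1,\dots,y^\ell_n\in\Cu(A)$ (padded to length $n$) with $2m\,y^\ell_i\le m[b_\ell]$ and $m\,b_\ell'\le\sum_i y^\ell_i$. Because the pieces in each parity class are mutually orthogonal and are supported in orthogonal corners cut out by the $f_\ell$, I can realise, for each fixed $i$, the orthogonal sum $Y_i:=\bigoplus_k y^{2k}_i$ (and its odd analogue) as a genuine class in $\Cu(\mathcal{M}(A))$ coming from a strictly convergent sum of representatives; the uniformity of $n$ guarantees that only finitely many such $Y_i$ occur. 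These are my candidate ``halves'' of $[T]$, and the task is to check that, suitably arranged, they satisfy $2Y_i\le[T]$ together with a covering inequality for any prescribed $T'\ll T$.

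The main obstacle---and where the full strength of the hypothesis is needed---is precisely this last step, which has two intertwined sources of trouble. First, the amplification must be absorbed: the inequalities delivered by the hypothesis only compare $2m\,y^\ell_i$ with $m[b_\ell]$, and in general one cannot cancel the factor $m$ (perforation phenomena, already visible for $\Cu(\Cpct)=\overline{\NN}$, show that $m[b]$ can be divisible while $[b]$ is not). Second, the corona part of $T$ must be divided as well: since $T^{1/2}e_NT^{1/2}\to T$ only strictly and not in norm when $T\notin A$, the class $[T]$ is strictly larger than $\sup_N[T^{1/2}e_NT^{1/2}]$, so the finite truncations never reach $[T]$ and the halves must also account for $T$ ``at infinity''. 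Both points are governed by the soft part of the relevant monoid, and I would resolve them through the soft-part characterisation of bounded divisibility (\autoref{prp:BoundDivSoft}) together with the sequential, strict structure of $\mathcal{M}(A)$; it is exactly the \emph{boundedness} (finiteness of $n$, uniform along the orthogonal sequence) that makes the assembled $Y_i$ into finitely many honest multiplier classes and thereby rules out the corona characters that, by Examples~\ref{exa:product} and~\ref{exa:simple}, can appear when $A$ is merely nowhere scattered.
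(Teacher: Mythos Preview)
Your decomposition via an approximate unit and the splitting into two orthogonal sequences is exactly the right first move, and your assembled classes $Y_i$ are essentially what \autoref{prp:StConvSumsDiv} produces. However, the resolution you propose for the two obstacles does not go through, and the paper handles them by a different (and simpler) mechanism that you are missing.

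First, the amplification. You correctly note that the hypothesis only yields $2m\,y^\ell_i\le m[b_\ell]$, and that in general the factor $m$ cannot be cancelled. Your proposed fix via \autoref{prp:BoundDivSoft} fails: that result assumes the Global Glimm Property, which is not available for $\mathcal{M}(A)$ (indeed, establishing it is harder than what you are trying to prove). The paper's actual device is \autoref{lma:NScaCarac}: a \ca{} $B$ is nowhere scattered as soon as, for every $b\in B_+$, \emph{some} multiple $n[b]$ is weakly $(n{+}1,\omega)$-divisible. So one never needs to divide $[T]$ itself; it suffices to exhibit finite $\mwdiv$ for each positive element, and the factor $m$ is simply absorbed into the multiple.

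Second, the corona part and the error term. Working directly in $\mathcal{M}(A)$ you must simultaneously control the $a\in A$ of norm $\le\varepsilon$ appearing in the Kirchberg--R\o rdam decomposition and the ``tail at infinity''. The paper sidesteps this entirely: it first proves that the \emph{corona} $\mathcal{M}(A)/A$ is nowhere scattered (\autoref{prp:CoronaNSca}), where the error term $a$ vanishes and every positive element is a genuine sum $\pi(R_1)+\pi(R_2)$ of two images of orthogonal strict sums; then it invokes the extension permanence of nowhere scatteredness (\cite[Proposition~4.2]{ThiVil21arX:NowhereScattered}) together with the fact, which you already observed, that $A$ itself is nowhere scattered. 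Your attempt to treat both halves at once inside $\mathcal{M}(A)$ is therefore unnecessarily hard, and the final appeal to the ``soft-part characterisation'' does not supply the missing inequality $2Y_i\le[T]$.
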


 Using the study of bounded divisibility from \autoref{sec:BoundDiv}, and that nowhere scatteredness passes to ideals (\cite[Proposition 4.2]{ThiVil21arX:NowhereScattered}), one obtains:
 
 \begin{thm}[\ref{prp:FinNucDimMANSca}]
 Let $A$ be a $\sigma$-unital \ca{}. Assume that $A$ is of 
 \begin{itemize}
  \item[(i)] real rank zero, or
  \item[(ii)] finite nuclear dimension, or
  \item[(iii)] stable rank one with $k$-comparison.
 \end{itemize}
 
 Then $A$ is nowhere scattered if and only if $\mathcal{M}(A)$ is nowhere scattered.
 \end{thm}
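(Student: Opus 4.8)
The plan is to derive both implications from \autoref{thm:MainMA} together with the bounded-divisibility analysis of \autoref{sec:BoundDiv}, observing that only one of the two directions actually uses the regularity hypotheses. The implication ``$\mathcal{M}(A)$ nowhere scattered $\Rightarrow$ $A$ nowhere scattered'' is immediate and needs none of (i)--(iii): the algebra $A$ sits inside $\mathcal{M}(A)$ as an ideal, and nowhere scatteredness passes to ideals by \cite[Proposition~4.2]{ThiVil21arX:NowhereScattered}.

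For the converse, suppose $A$ is nowhere scattered and satisfies one of (i)--(iii). By \autoref{thm:MainMA} it suffices to verify that for every orthogonal sequence $(a_i)_i$ of positive elements of $A$ there are $m,n\in\NN$, \emph{independent of $i$}, with $[a_i\otimes 1_m]$ weakly $(2m,n)$-divisible for all $i$. By \cite[Theorem~8.9]{ThiVil21arX:NowhereScattered}, nowhere scatteredness already gives that each individual class $[a_i]$ is weakly $(2,\omega)$-divisible; the entire difficulty is thus to upgrade this index-by-index divisibility to a bound uniform along the whole sequence. This is precisely the bounded-divisibility phenomenon isolated in \autoref{sec:BoundDiv}, and each of (i)--(iii) is a hypothesis that guarantees it. Concretely, in the stable-rank-one case the $k$-comparison assumption converts weak $(2,\omega)$-divisibility of $[a_i]$ into weak $(2m,n)$-divisibility of $[a_i\otimes 1_m]$ with $m,n$ depending only on $k$; in the real-rank-zero case Cuntz classes are suprema of projections, and the approximate halving supplied by nowhere scatteredness can be performed with a uniform bound; finally, finite nuclear dimension furnishes a comparison-type property of $\Cu(A)$ of the kind used in the stable-rank-one case, reducing (ii) to that argument. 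In all three cases one extracts constants $m,n$ not depending on $i$, which is exactly the input \autoref{thm:MainMA} requires.

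I expect the main obstacle to be precisely this passage from pointwise to uniform divisibility, since it is the step that genuinely fails in the absence of regularity: \autoref{exa:product} and \autoref{exa:simple} exhibit nowhere scattered \ca{s} of unbounded divisibility whose multiplier algebras are not nowhere scattered. Among the three cases I anticipate finite nuclear dimension to be the most delicate, as there the uniform bound is not directly visible and must be drawn out of the comparison consequences of finiteness of the nuclear dimension, whereas the $k$-comparison and real-rank-zero hypotheses yield the uniform constant more transparently. It may also streamline all three cases to route them through \autoref{prp:BoundDivSoft}, reducing uniform divisibility to a statement about the soft part of the Cuntz semigroup.
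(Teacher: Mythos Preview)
Your overall architecture coincides with the paper's: one direction is \cite[Proposition~4.2]{ThiVil21arX:NowhereScattered}, and for the other you feed a uniform divisibility bound into \autoref{thm:MainMA}. Cases (i) and (iii) are handled essentially as in the paper (via \autoref{exa:exaDivRR0} and \autoref{prp:SurjMap}/\autoref{rmk:Sr1SurjMap} respectively), so there your sketch is fine, if somewhat informal.

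The gap is in case~(ii). You propose to obtain the uniform bound for finite nuclear dimension by extracting ``a comparison-type property of $\Cu(A)$ of the kind used in the stable-rank-one case, reducing~(ii) to that argument.'' But the stable-rank-one argument is \autoref{prp:SurjMap}, and that proposition needs \emph{two} ingredients: $k$-comparison \emph{and} a surjective rank map (the latter supplied, in the stable-rank-one situation, by \cite[Theorem~7.14]{AntPerRobThi18arX:CuntzSR1}). Finite nuclear dimension is not known to yield surjectivity of the rank map, so the reduction you envisage does not go through. Routing through \autoref{prp:BoundDivSoft} does not help either: that result only trades bounded divisibility on all of $\Cu(A)$ for bounded divisibility on $\Cu(A)_{\soft}$, and you would still need to produce the uniform bound there.

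The paper instead handles~(ii) directly via \autoref{prp:FNucDimImpMWDiv}, which rests on \cite[Proposition~3.2]{RobTik17NucDimNonSimple} and gives the explicit bound $\wdiv_{4(k+1)}\big(2(k+1)[a]\big)\leq 8(k+1)^2$ for every $[a]\in\Cu(A)$ whenever $\dim_{\rm nuc}(A)\leq k$ and $A$ is nowhere scattered. This is a divisibility statement proved straight from the completely positive approximations, with no detour through comparison or rank realization; it is exactly the missing input your plan needs for~(ii).
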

 
 Further, one can generalize (iii) above by changing stable rank one for the condition of having a \emph{surjective rank map}; see \autoref{prp:SurjMap} and \autoref{rmk:Sr1SurjMap}.

 We finish the section by proving a weak converse of \autoref{thm:IntroMainMA} for stable \ca{s}:
 
 \begin{thm}[\ref{prp:MAStable}]
  Let $A$ be a $\sigma$-unital, stable \ca{}. Assume that $\mathcal{M}(A)$ is nowhere scattered. Then, for every $a\in A_+$ and $m\in\NN$ there exists $n$ such that $[a]$ is weakly $(m,n)$-divisible.
 \end{thm}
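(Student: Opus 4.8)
The plan is to prove the contrapositive in spirit: assume $\mathcal{M}(A)$ is nowhere scattered and deduce that each $[a]$ admits a uniform divisibility bound. Since $A$ is stable, I would first exploit the structural advantages this gives. Stability provides an infinite orthogonal sequence of copies of any given hereditary subalgebra, and more importantly it lets me embed sequences of elements of $A$ into $\mathcal{M}(A)$ in a controlled way. The key idea is that weak $(2,\omega)$-divisibility of $\Cu(\mathcal{M}(A))$ — which holds because $\mathcal{M}(A)$ is nowhere scattered, by \cite[Theorem~8.9]{ThiVil21arX:NowhereScattered} — should be transferred down to a \emph{bounded} divisibility statement for individual elements of $A$ via a compactness or diagonal argument against a suitably chosen element of $\mathcal{M}(A)$.

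The central construction I would carry out is the following. Given $a\in A_+$ and a target $m\in\NN$, suppose toward a contradiction that $[a]$ is not weakly $(m,n)$-divisible for any $n\in\NN$. Using stability, I would assemble an orthogonal sequence $(a_i)_i$ of positive elements in $A$, each Cuntz-equivalent to $a$, supported on mutually orthogonal corners, and form the "diagonal" element $b=\sum_i a_i$, which lives in $\mathcal{M}(A)$ (the sum converges strictly precisely because the $a_i$ are orthogonal and uniformly bounded). The failure of bounded $(m,n)$-divisibility for $a$ at every level $n$ should then obstruct weak $(2,\omega)$-divisibility of the class $[b]$ in $\Cu(\mathcal{M}(A))$: any attempt to weakly $(2,\omega)$-divide $[b]$ restricts, via the orthogonality of the summands, to finite-level divisions of the individual $[a_i]$, and the unboundedness forces the number of summands required to grow without control, contradicting that a single Cuntz relation $\sum_k [x_k] \leq [b]$ with $2[x_k]\leq [x_k]+\cdots$ can be realized inside the compact/way-below structure of $\Cu(\mathcal{M}(A))$.

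The hard part, and the step I expect to be the main obstacle, is making the passage from divisibility of the diagonal class $[b]$ back to \emph{uniform} bounded divisibility of the fibers $[a_i]$ fully rigorous at the level of the Cuntz semigroup. The subtlety is that $\Cu(\mathcal{M}(A))$ does not decompose as a product or direct sum over the orthogonal corners in any naive way — the way-below relation and suprema in $\Cu(\mathcal{M}(A))$ mix contributions across infinitely many fibers, so one cannot simply "project" a weak $(2,\omega)$-division of $[b]$ onto each coordinate. I would address this by working with the strict topology and cutting down by approximate units adapted to the orthogonal decomposition: a weak $(2,\omega)$-division of $[b]$ yields, for each $\eps$-cutdown $(b-\eps)_+$, finitely many elements whose classes can be pushed into a finite sub-sum $\sum_{i\leq N} a_i$, and there one can invoke the internal structure of the finite orthogonal corner, which is genuinely a corner of $A\otimes\mathbb{K}$, to read off a bounded division of some $[a_i]$. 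The uniformity of $m$ across the sequence, combined with the fact that only finitely many fibers participate in each cutdown, is what ultimately produces the desired single $n$ for the fixed $a$.

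Throughout, I would lean on the assumption that $A$ is $\sigma$-unital to guarantee the existence of a sequential approximate unit and hence the well-behavedness of the strict sums defining $b$, and on the characterization of nowhere scatteredness via weak $(2,\omega)$-divisibility to have a concrete divisibility hypothesis to manipulate. The overall logical skeleton is short — contrapositive, diagonal construction, transfer of divisibility — but the technical heart lies in the Cuntz-semigroup bookkeeping across the infinite orthogonal decomposition, which is where I would spend the bulk of the argument.
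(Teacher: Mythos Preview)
Your diagonal construction does not produce a contradiction. If you take pairwise orthogonal $a_i\in A_+$ that are all Cuntz equivalent to the same element $a$ and set $b=\sum_i a_i\in\mathcal{M}(A)_+$, then in $\Cu(\mathcal{M}(A))$ one has $k[a]\leq [b]\leq \infty[a]$ for every $k$, so $[b]=\infty[a]$. But $\infty[a]$ is automatically $(m,\omega)$-divisible for every $m$: given $x'\ll\infty[a]$ choose $k$ with $x'\leq k[a]$ and use $y_1=\ldots=y_k=[a]$; then $m[a]\leq\infty[a]$ regardless of any divisibility properties of $[a]$. Thus nowhere scatteredness of $\mathcal{M}(A)$ tells you nothing new about $[b]$, and no contradiction can be extracted from the diagonal element. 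The ``restriction to fibers'' you propose also cannot work: all the $a_i$ generate the same ideal (they are Cuntz equivalent), so cutting down by $\infty[a_i]=\infty[a]$ does not isolate the $i$-th summand. The orthogonality of the $a_i$ is algebraic, not ideal-theoretic, and there is no $\Cu$-level map that projects a division of $[b]$ onto a division of an individual $[a_i]$.

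The paper's argument avoids this entirely and is much shorter. The key input you are missing is \cite[Proposition~2.8]{AntPerRobThi18arX:CuntzSR1}: because $A$ is stable, for each $a\in A_+$ there is a \emph{projection} $p_a\in\mathcal{M}(A)$ with the property that, for $x\in\Cu(A)$, one has $x\leq[a]$ if and only if $x\leq[p_a]$ in $\Cu(\mathcal{M}(A))$. Projections give compact Cuntz classes, and a compact element that is weakly $(m,\omega)$-divisible is automatically weakly $(m,n)$-divisible for some finite $n$ (\autoref{rmk:FinDivProj}). So nowhere scatteredness of $\mathcal{M}(A)$ yields $z_1,\ldots,z_n\in\Cu(\mathcal{M}(A))$ with $[p_a]\leq\sum_j z_j$ and $mz_j\leq[p_a]$. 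Setting $y_j:=z_j\wedge\infty[a]$ forces $y_j\in\Cu(A)$, and the defining property of $p_a$ converts $my_j\leq[p_a]$ into $my_j\leq[a]$. Since $A$ is an ideal in $\mathcal{M}(A)$, these inequalities hold already in $\Cu(A)$. The whole point is that stability provides a \emph{compact} surrogate for $[a]$ inside $\Cu(\mathcal{M}(A))$, which is exactly what turns $(m,\omega)$-divisibility into finite $(m,n)$-divisibility; your diagonal element is at the opposite extreme, being idempotent rather than compact.
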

 
Most of the results in Sections \ref{sec:FinDiv} and \ref{sec:BoundDiv} can be translated directly to the more general setting of abstract Cuntz semigroups, or \CuSgp{s} for short; see for example \cite{AntPerThi18TensorProdCu} and \cite{GarPer22ModTheCu}. However, since in this paper we focus on multiplier algebras (which have no known $\Cu$-counterpart), we state all the results in the language of \ca{s} to ease the notation.

\subsection*{Acknowledgements.} The author thanks Hannes Thiel for his comments on a first draft of this paper.

\section{Preliminaries}

\begin{pgr}[Nowhere scatteredness and the Global Glimm Property]\label{pgr:GGP}
 As defined in \cite[Definition~A]{ThiVil21arX:NowhereScattered}, a \ca{} is \emph{nowhere scattered} if none of its quotients contains a minimal open projection. By \cite[Theorem~3.1]{ThiVil21arX:NowhereScattered}, a \ca{} $A$ is nowhere scattered if and only if no nonzero ideal-quotient of $A$ is elementary.
 
 One also says that a \ca{} $A$ has the \emph{Global Glimm Property} (\cite[Definition~4.12]{KirRor02InfNonSimpleCalgAbsOInfty}) if for every $a\in A_+$ and $\varepsilon >0$ there exists a \stHom{} $\varphi\colon M_2 (C_0 (0,1])\to \overline{aAa}$ such that the image of $\varphi$ contains $(a-\varepsilon )_+$. 
 
 A \ca{} is nowhere scattered whenever it has the Global Glimm Property. The converse remains open, and is known as the \emph{Global Glimm Problem}; see \cite{AntPerRobThi18arX:CuntzSR1}, \cite{EllRor06Perturb}, and \cite{ThiVil22arX:Glimm}.
 
 Examples of \ca{s} with the Global Glimm Property include simple, non-elementary \ca{s}, $\mathcal{Z}$-stable \ca{s}, and purely infinite \ca{s}. Traceless \ca{s} are all nowhere scattered.
\end{pgr}

\begin{pgr}[The Cuntz semigroup]
 For any pair of positive elements $a,b$ in a \ca{} $A$ one writes $a\precsim b$ if $a=\lim_n r_n b r_n^*$ for some sequence $(r_n)_n$ in $A$. One also says that $a$ is \emph{Cuntz equivalent} to $b$, in symbols $a\sim b$, whenever $a\precsim b$ and $b\precsim a$.
 
 The \emph{Cuntz semigroup} $\Cu (A)$ is defined as the quotient $(A\otimes \mathcal{K}_+)/\sim$ equipped with the order induced by $\precsim$ and the addition induced by diagonal addition; see \cite{Cun78DimFct} and \cite{CowEllIva08CuInv} for details.
 
 Given elements $x,y\in\Cu (A)$, we write $x\ll y$ whenever there exists $a\in (A\otimes \mathcal{K})_+$ and $\varepsilon >0$ such that $x\leq [(a-\varepsilon )_+]$ and $y=[a]$. As shown in \cite{CowEllIva08CuInv}, every increasing sequence in $\Cu (A)$ has a supremum, and every element can be written as the supremum of a $\ll$-increasing sequence.
 
 In recent years, the Cuntz semigroup has beneffited from the study of abstract Cuntz semigroups, or \emph{\CuSgp{s}} for short. Good references for this include \cite{AraPerTom11Cu}, \cite{AntPerThi18TensorProdCu}, and \cite{GarPer22ModTheCu}. In this paper, we will not use the language of \CuSgp{s} (except in \autoref{prp:Dim0BoundDiv}), but we will still make use of some of the abstract properties that the Cuntz semigroup of a \ca{} always satisfies. These are:
 \begin{itemize}
  \item[\axiomO{5}] Given $x',x,y',y,z\in\Cu (A)$ such that $x+y\leq z$ with $x'\ll x$ and $y'\ll y$, there exists $c\in\Cu (A)$ such that $y'\ll c$ and $x'+c\leq y\leq x+c$.
 \item[\axiomO{6}] Given $x'\ll x\ll y+z$, there exist $v\leq x,y$ and $w\leq x,z$ such that $x'\leq v+w$.
 \end{itemize}
 
 The reader is referred to \cite[Proposition~4.6]{AntPerThi18TensorProdCu} (also \cite{RorWin10ZRevisited}) and \cite{Rob13Cone} for the respective proofs.
\end{pgr}

\begin{pgr}[Divisibility in the Cuntz semigroup]\label{pgr:2OmegaDiv}
 Let $A$ be a \ca{}, let $x\in\Cu (A)$ and take $n\geq 1$ and $m\geq 2$. Following \cite{RobRor13Divisibility}, we will say that $x$ is \emph{weakly $(m,n)$-divisible} if, whenever $x'\ll x$, there exist elements $y_1,\ldots ,y_n$ such that $x'\leq y_1+\ldots +y_n$ and $my_j\leq x$ for each $j\leq n$. Similarly, $x$ is \emph{weakly $(m,\omega )$-divisible} if the previous condition holds without any bound on $n$, that is, allowing $n$ to depend on $x'$.
 
 The element $x$ is said to be \emph{$(m,n)$-divisible} (resp. $(m,\omega )$-divisible) if one can always set $y_1=\ldots =y_n$.
 
 A \ca{} is nowhere scattered if and only if every element in its Cuntz semigroup is weakly $(m,\omega )$-divisible for each $m$, whilst a \ca{} has the Global Glimm Property if and only if every element is $(m,\omega )$-divisible for each $m$; see \cite[Theorem~8.9]{ThiVil21arX:NowhereScattered} and \cite[Theorem~3.6]{ThiVil22arX:Glimm} respectively.
\end{pgr}

\section{Finite divisibility}\label{sec:FinDiv}

In this section we recall the notions of finite divisibility introduced in \cite{RobRor13Divisibility}; see \autoref{dfn:Div}. We prove their main properties (\autoref{prp:Weak2wSum}), and study some situations where $\Cu (A)$ contains a sup-dense subset of elements with finite divisibility. We also find sufficient conditions for finite weak divisibility to imply finite divisibility; see \autoref{prp:DiscGGP}.

\begin{dfn}[\cite{RobRor13Divisibility}]\label{dfn:Div}
 Let $a$ be a positive element of a \ca{} $A$, and let $m\geq 2$. We let $\wdiv_m ([a])$ and ${\rm Div}_m ([a])$ be the least positive integers $n,n'$ such that $[a]\in\Cu (A)$ is weakly $(m,n)$-divisible and $(m,n')$-divisible respectively. 
 
 If no such $n$ or $n'$ exist, we set $\wdiv_m ([a])=\infty $ or ${\rm Div}_m ([a])=\infty$.
\end{dfn}

\begin{rmk}\label{rmk:FinDivProj}
 The class of a positive element $a\in A_+$ in $\Cu (A)$ is said to be \emph{compact} if $[a]\ll [a]$. Every projection gives rise to a compact Cuntz class and, in some cases, these are the only compact classes; see \cite{BroCiu09IsoHilbModSF}. 
  
  Given a compact element $[a]\in\Cu (A)$, then $[a]$ is weakly $(m,\omega )$-divisible if and only if $\wdiv_m ([a])<\infty $.
  
  However, for a non-compact element $[a]$, $\wdiv_m ([a])<\infty$ is not equivalent to $[a]$ being weakly $(m,\omega )$-divisible; see \autoref{exa:DirSumInfDiv} and \autoref{rmk:ExaSimpNotDiv}.
\end{rmk}


Let us first summarize the main properties of $\wdiv_m ()$. Part (iii) of the following lemma is in analogy to \cite[Lemma~4.9]{KirRor02InfNonSimpleCalgAbsOInfty}, while (iv) is a $\Cu$-analogue of  \cite[Proposition~3.6]{RobRor13Divisibility}.

Recall that, given $x,y\in\Cu (A)$, the infimum $x\wedge \infty y$ always exists; see \cite[Remark~2.6]{AntPerRobThi21Edwards}. Here, $\infty y$ denotes the element $\sup_n ny$.

\begin{lma}\label{prp:Weak2wSum}
 Let $A$ be a \ca{}, and let $x=[a]\in\Cu (A)$ and $m\in\NN$. Then,
 \begin{itemize}
 \item[(i)]  $\wdiv_m (x)\leq \wdiv_{m'} (x)$ whenever $m\leq m'$.
 \item[(ii)] $\wdiv_m ([a])\leq N(\wdiv_m ([b_1]) +\ldots + \wdiv_m ([b_r]))$ whenever 
 \[
  [b_1],\ldots ,[b_r]\leq [a]\leq N([b_1]+\ldots +[b_r]).
 \]
  \item[(iii)] $\wdiv_m ([a+b])\leq \wdiv_m ([a]) + \wdiv_m ([b])$.
  \item[(iv)] Given $x,y\in\Cu (A)$ such that $y\leq x$ and $x\wedge \infty y=y$, then $\wdiv_m (y)\leq \wdiv_m (x)$.
  \item[(v)] $\wdiv_m (x)\leq \sup_k \wdiv_m (x_k)$ for every increasing sequence $(x_k)_k$ in $\Cu (A)$ with supremum $x$.
 \end{itemize}
\end{lma}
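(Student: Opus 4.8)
The plan is to prove (i) and (v) directly from the definitions, to deduce (iii) as a special case of (ii), and to treat (ii) and (iv) with the same refinement machinery coming from \axiomO{6}.

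For (i), if $x$ is weakly $(m',n)$-divisible and $m\le m'$, then any witnesses $y_1,\ldots,y_n$ for $x'\ll x$ satisfy $my_j\le m'y_j\le x$, so they also witness weak $(m,n)$-divisibility; hence $\wdiv_m(x)\le\wdiv_{m'}(x)$. For (v), set $n=\sup_k\wdiv_m(x_k)$ (the statement being vacuous if this is infinite), so that each $x_k$ is weakly $(m,n)$-divisible. Given $x'\ll x=\sup_k x_k$, I interpolate $x'\ll x''\ll x$ and pick $k$ with $x''\le x_k$, so that $x'\ll x_k$; applying weak $(m,n)$-divisibility of $x_k$ yields $y_1,\ldots,y_n$ with $x'\le\sum_j y_j$ and $my_j\le x_k\le x$, as needed.

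The key tool for (ii) and (iv) is the standard refinement property: from \axiomO{6}, together with the fact that addition preserves suprema of increasing sequences, one obtains that whenever $x'\ll x\le y_1+\ldots+y_n$ there exist $u_1,\ldots,u_n$ with $u_j\le x$, $u_j\le y_j$ and $x'\le u_1+\ldots+u_n$; and whenever $x'\ll c_1+\ldots+c_n$ one may choose $c_j'\ll c_j$ with $x'\le c_1'+\ldots+c_n'$. For (ii), write $n_i=\wdiv_m([b_i])$ (the inequality being trivial if some $n_i=\infty$). Given $x'\ll[a]$, I interpolate $x'\ll x''\ll[a]\le N([b_1]+\ldots+[b_r])$, the latter being the sum of the elements $[b_i]$ with $i=1,\ldots,r$ and $l=1,\ldots,N$, and use refinement to obtain $c_{i,l}\le[b_i]$ with $x''\le\sum_{i,l}c_{i,l}$; since $x'\ll x''$, one further finds $c_{i,l}'\ll[b_i]$ with $x'\le\sum_{i,l}c_{i,l}'$. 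Applying weak $(m,n_i)$-divisibility of $[b_i]$ to each $c_{i,l}'\ll[b_i]$ produces $n_i$ elements whose sum dominates $c_{i,l}'$ and each of whose $m$-multiples lies below $[b_i]\le[a]$. Collecting these over all $i,l$ expresses $x'$ as a sum of $\sum_i Nn_i=N(n_1+\ldots+n_r)$ elements with the required property, giving the bound. Part (iii) is then the special case $r=2$, $N=1$, $b_1=a$, $b_2=b$, since $[a],[b]\le[a+b]\le[a]+[b]$.

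For (iv), let $n=\wdiv_m(x)$ and take $y'\ll y$. I interpolate $y'\ll y_a\ll y_b\ll y$, noting that $y_a\le\infty y$. Since $y_b\ll y\le x$, weak $(m,n)$-divisibility of $x$ gives $t_1,\ldots,t_n$ with $y_b\le t_1+\ldots+t_n$ and $mt_j\le x$, whence $y_a\ll\sum_j t_j$. Applying the refinement property with $y_a$ as the dominating element yields $u_1,\ldots,u_n$ with $u_j\le y_a$, $u_j\le t_j$ and $y'\le\sum_j u_j$. Setting $y_j:=t_j\wedge\infty y$, one has $u_j\le t_j$ and $u_j\le y_a\le\infty y$, hence $u_j\le t_j\wedge\infty y=y_j$ and so $y'\le\sum_j y_j$; moreover $my_j\le mt_j\le x$ and $my_j\le\infty y$, so by the universal property of the infimum $my_j\le x\wedge\infty y=y$. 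Thus $y$ is weakly $(m,n)$-divisible and $\wdiv_m(y)\le\wdiv_m(x)$. The main obstacle throughout is the bookkeeping of the $\ll$ and $\le$ relations in the refinement step: \axiomO{6} only produces summands that are $\le$ (not $\ll$) the target, so the passage to way-below approximants must be arranged by repeatedly interpolating and invoking compatibility of addition with suprema of increasing sequences. Once this refinement lemma is in place, each part reduces to a direct application of the definition of weak $(m,n)$-divisibility.
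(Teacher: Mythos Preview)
Your proof is correct and follows essentially the same approach as the paper: both deduce (iii) from (ii), handle (ii) by decomposing $x'\ll[a]\leq N\sum[b_i]$ into pieces way-below each $[b_i]$ and applying divisibility there, and treat (iv) by passing to the witnesses $t_j\wedge\infty y$. The one minor difference is that the paper verifies $y'\leq\sum(y_j\wedge\infty y)$ in (iv) directly from the fact that $\_\wedge\infty y$ is a monoid morphism (citing \cite{AntPerRobThi21Edwards}), whereas you route this through an \axiomO{6} refinement producing the $u_j$'s; both arguments are valid and yield the same conclusion.
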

\begin{proof}
 (i) follows  directly from the definition of  $\wdiv_m ([a])$. 

 To see (ii), let $n_j=\wdiv_m ([b_j])$ for $j=1,\ldots ,r$. We may assume that all quantities are finite, since otherwise we are done.
 
 Take $x\ll [a]$. Since $[a]\leq N[b_1]+\ldots +N[b_r]$, we can find elements $x_j$ such that $x_j\ll [b_j]$ and $x\leq Nx_1+\ldots +Nx_r$. The element $[b_j]$ is weakly $(m,n_j)$-divisible, so we can find elements $y_{1,j},\ldots ,y_{n_j,j}$ such that 
 \[
  my_{i,j} \leq [b_j],\andSep 
  x_{j}\leq y_{1,j}+ \ldots +y_{n_j,j}
 \]
 for each $i$ and $j$.
 
 Using that $[b_j]\leq [a]$, one has $my_{i,j}\leq [a]$. Further, since we also have $x\leq N\sum_j x_j$, we deduce that $x\leq N\sum_{i,j} y_{i,j}$. It follows that $\sdiv_m ([a])\leq N(n_1+\ldots +n_r)$, as desired.
 
 Note that, given $a,b\in (A\otimes\mathcal{K})_+$, we have $[a+b]\leq [a]+[b]$ and $[a],[b]\leq [a+b]$. Thus, (iii) follows directly from (ii).
 
 For (iv), let $x,y\in \Cu (A)$ be as stated. As before, we may assume that $\wdiv_m (x)=n$ for some $n\in\NN$. Take $y'\ll y$, and apply the weak $(m,n)$-divisibility of $x$ to obtain elements $y_1,\ldots ,y_n$ satisfying the properties from \autoref{pgr:2OmegaDiv} for $y'$ and $x$.
 
 Since $my_j\leq x$ for every $j$, and $x\wedge \infty y=y$, one obtains $m(y_j\wedge \infty y)\leq x\wedge \infty y=y$. It is now easy to check that the elements $y_j\wedge \infty y$ satisfy the conditions from \autoref{pgr:2OmegaDiv} for $y'$ and $y$, as required.
 
 To prove (v), assume that the supremum $\sup_k \wdiv_m (x_k)$ is finite, since otherwise there is nothing to prove. Let $n\in\NN$ be such that $\sup_k \wdiv_m (x_k)\leq n$, and take $x'\in \Cu (A)$ such that $x'\ll x$. Since $x=\sup_k x_k$, it follows that $x'\ll x_k$ for some $k\in\NN$. Thus, using that $\wdiv_m (x_k)\leq n$, we obtain elements $y_1,\ldots ,y_n$ with $x'\leq y_1+\ldots +y_n$ and $my_j\leq x_k\leq x$ for each $j$. We get that $\wdiv_m (x)\leq n$, as desired.
\end{proof}


Recall that a \ca{} $A$ is said to have \emph{strict comparison} if $[a]\leq [b]$ in $\Cu (A)$ whenever there exists $\delta>0$ such that $\lim_n \tau (a^{1/n})\leq (1-\delta)\lim_n \tau (b^{1/n})$ for every $2$-quasitrace $\tau$; see \cite{EllRobSan11Cone} for more details.

\begin{lma}\label{prp:DivStComp}
 Let $A$ be a \ca{}, let $[a]\in \Cu (A)$ and $m\in\NN$. Then,
 \[
  \wdiv_{km}(k[a])\leq \wdiv_m ([a])
 \]
 for every $k\in\NN$. 
 
 If $A$ has strict comparison, one has $\wdiv_{m} ([a])\leq \wdiv_{k(m+1)}(k[a])$.
\end{lma}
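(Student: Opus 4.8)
The statement consists of two transfer results in opposite directions, and in both the right-hand side may be assumed finite. For the first inequality no hypothesis on $A$ is needed, and the idea is to transport a division of $[a]$ across the $k$ summands of $k[a]$; for the second the passage goes the other way, from a division of $k[a]$ back to one of $[a]$, and this is where strict comparison enters through the induced dimension functions. Throughout, write $n=\wdiv_m([a])$ for the first part and $n'=\wdiv_{k(m+1)}(k[a])$ for the second.

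For the first inequality, I would take $z'\ll k[a]$. Choosing a $\ll$-increasing sequence $(a_\lambda)_\lambda$ with $\sup_\lambda a_\lambda=[a]$, and using that multiplication by the fixed integer $k$ commutes with suprema of increasing sequences, one has $k[a]=\sup_\lambda k a_\lambda$, so there is $\lambda$ with $z'\le k a_\lambda$; setting $x'=a_\lambda\ll[a]$ gives $z'\le k x'$. Applying weak $(m,n)$-divisibility of $[a]$ to $x'$ produces $y_1,\ldots,y_n$ with $x'\le y_1+\cdots+y_n$ and $m y_j\le[a]$ for each $j$. Each $y_j$ then satisfies $(km)y_j=k(m y_j)\le k[a]$, and spreading the decomposition $z'\le k x'\le k(y_1+\cdots+y_n)$ over the $k$ copies exhibits $z'$ as a sum of such admissible pieces. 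This shows $k[a]$ is weakly $(km,\cdot)$-divisible and yields the first inequality.

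The second inequality is the crux, and here I would use strict comparison. Fix $x'\ll[a]$; since $[a]\le k[a]$ we have $x'\ll k[a]$, so weak $(k(m+1),n')$-divisibility of $k[a]$ yields $w_1,\ldots,w_{n'}$ with $x'\le w_1+\cdots+w_{n'}$ and $k(m+1)w_j\le k[a]$ for each $j$. The plan is to show that the same elements already divide $[a]$ with divisor $m$, i.e. that $m w_j\le[a]$; setting $y_j=w_j$ then gives weak $(m,n')$-divisibility of $[a]$, hence the inequality. To obtain $m w_j\le[a]$ I pass to the dimension functions: for every $2$-quasitrace $\tau$, writing $d_\tau$ for the induced lower-semicontinuous dimension function, the relation $k(m+1)w_j\le k[a]$ gives $k(m+1)\,d_\tau(w_j)\le k\,d_\tau([a])$, whence
\[
 m\,d_\tau(w_j)\ \le\ \tfrac{m}{m+1}\,d_\tau([a])\ =\ \Big(1-\tfrac{1}{m+1}\Big)\,d_\tau([a]).
\]
Since this holds for all $\tau$ with the uniform slack $\delta=\tfrac{1}{m+1}>0$, strict comparison yields $m w_j\le[a]$, as desired.

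The main obstacle is precisely this last step: turning the Cuntz relation $k(m+1)w_j\le k[a]$ into $m w_j\le[a]$ amounts to \emph{dividing by $k$}, which fails in general and is exactly what strict comparison repairs. The role of the extra $+1$ in the divisor is to manufacture the uniform comparison constant $\delta=\tfrac1{m+1}$ required by the definition of strict comparison, and a little care is needed in the degenerate case $d_\tau([a])=\infty$, where the displayed inequality holds trivially. By contrast, the first inequality is the routine reassembly above and needs no regularity hypothesis on $A$.
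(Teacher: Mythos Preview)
Your argument tracks the paper's in both parts. For the second inequality the only difference is packaging: the paper derives $(km+1)(mz_j)\le km(m+1)z_j\le (km)[a]$ and then invokes \cite[Proposition~6.2]{EllRobSan11Cone} (strict comparison gives almost unperforation) to conclude $mz_j\le[a]$, while you go directly through dimension functions with the uniform slack $\delta=\tfrac1{m+1}$. These are the same step in different clothing, and your treatment of the case $d_\tau([a])=\infty$ is fine.

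There is, however, a genuine gap in your first-part argument, and the paper's own proof shares it. Spreading $z'\le k(y_1+\cdots+y_n)$ over the $k$ copies produces $kn$ admissible pieces (each $y_j$ repeated $k$ times), each satisfying $(km)y_j\le k[a]$; your phrase ``weakly $(km,\cdot)$-divisible'' hides the count. The alternative grouping into $n$ pieces $ky_1,\ldots,ky_n$ does \emph{not} work, since that would require $(km)(ky_j)=k^2my_j\le k[a]$, which is not available. So what the argument actually establishes is $\wdiv_{km}(k[a])\le k\,\wdiv_m([a])$, not the stated bound $\le\wdiv_m([a])$. In fact the inequality as written is false in general: in $\Cu(\CC)\cong\NNbar$ with $[a]=2$ and $m=k=2$ one computes $\wdiv_4(4)=4>2=\wdiv_2(2)$. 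The weaker bound with the extra factor $k$ is what both arguments really prove, and it is all that is needed for the paper's only use of this direction (finiteness in \autoref{prp:DiagSumDiv}).
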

\begin{proof}
 Let $n=\wdiv_m ([a])$, which we may assume to be finite, and take $x\ll k[a]$. Then, there exists $\varepsilon >0$ such that $x\ll k[(a-\varepsilon )_+]$. Since $[(a-\varepsilon )_+]\ll [a]$, there exist $y_1,\ldots ,y_n\in\Cu (A)$ such that 
 \[
  [(a-\varepsilon )_+]\leq y_1+\ldots +y_n,\andSep 
  my_j\leq [a]
 \]
 for each $j$.
 
 In particular, one gets
 \[
  x\ll k[(a-\varepsilon )_+]\leq ky_1+\ldots +ky_n,\andSep 
  m(ky_j)\leq k[a],
 \]
 which implies $\wdiv_{km}(k[a])\leq n$, as desired.
 
 Now assume that $A$ has strict comparison of positive elements, and let $n=\wdiv_{k(m+1)}(k[a])$. As before, we may assume $n$ to be finite. Let $x\ll [a]$, which implies $kx\ll k[a]$. Then, there exist $z_1,\ldots ,z_n$ such that $x\leq kx\leq z_1+\ldots +z_n$ and $k(m+1)z_j\leq k[a]$. Thus, one has $(km+1)mz_j\leq km(m+1)z_j\leq (km)[a]$.
 
 Using \cite[Proposition~6.2]{EllRobSan11Cone}, we get $mz_j\leq [a]$. This implies $\wdiv_{m} ([a])\leq \wdiv_{k(m+1)}(k[a])$.
\end{proof}

\begin{rmk}
 Let $A$ be a \ca{} and let $m\in\NN$. Assume that $\wdiv_{m} ([a])<\infty$ for every $a\in A_+$. \autoref{prp:Weak2wSum}~(ii) implies, in particular, that $\wdiv_m ( [b] )<\infty$ for every $b\in M_n (A)_+$ and $n\in\NN$. 
 
 Indeed, given $b\in M_n(A)_+$ there exist elements $b_1,\ldots , b_n\in A_+$ and $N\in\NN$ such that $[b_i]\leq [b]\leq N([b_1]+\ldots +[b_n])$ for each $i\leq n$; see \cite[4.2]{AntPerThi20CuntzUltraproducts} and \cite[Lemma~3.3]{ThiVil22arX:Glimm}.
 
 By \autoref{prp:Weak2wSum} (ii), we have that $\wdiv_{m} (b)<\infty$, as desired.
\end{rmk}

\begin{exa}\label{exa:DirSumInfDiv}
 Let $(A_k)_k$ be a family of unital \ca{s} such that 
 \[
  k\leq \wdiv_2 ([1_{A_k}])\leq 7k
 \]
 for each $k$. 
 
 By \cite[Theorem~7.9]{RobRor13Divisibility}, such \ca{s} can be taken to be simple, unital, infinite dimensional AH-algebras. Using \cite[Proposition~4.13]{ThiVil21arX:NowhereScattered}, one sees that $A:=\oplus_k A_k$ is nowhere scattered. Thus, every element in $\Cu (A)$ is weakly $(2,\omega )$-divisible.
 
 Consider the element $x=\sup_n \sum_{k=1}^n [1_{A_k}]\in \Cu (\oplus_k A_k)$, and denote by $\iota_k$ the induced inclusion $\Cu (A_k)\to \Cu (\oplus_k A_k)$. Then, 
 \[
  \iota_k ([1_{A_k}])\leq x,\andSep 
  x\wedge \infty \iota_k ([1_{A_k}]) = \iota_k ([1_{A_k}])
  .
 \]

 Thus, \autoref{prp:Weak2wSum}~(iv) implies that 
 \[
  k\leq \wdiv_2 (\iota_k ([1_{A_k}]))\leq \wdiv_2 (x).
 \]
 
 This shows that $x$ is weakly $(2,\omega )$-divisible, but $\wdiv_2 (x)=\infty$; see also \autoref{prp:CharacCharac}.
\end{exa}
 
We now move our attention to \ca{s} whose Cuntz semigroup $\Cu (A)$ satisfies that every element (or, at least, a finite multiple of it) has finite weak divisibility.

To study such \ca{s}, we will need the following definitions:

\begin{dfn}
 Let $A$ be a \ca{}, and let $a\in A_+$. We set 
 \begin{align*}
  {\rm mDiv} ([a]) &= \inf \{ \sdiv_{2m} (m[a]) \mid m\in\NN \} \\
  \mwdiv ([a]) &= \inf \{ \wdiv_{2m} (m[a]) \mid m\in\NN \}
 \end{align*}
\end{dfn}
 
 We will denote by  $\Cu (A)_{\wdiv}$ (respectively $\Cu (A)_{\mwdiv}$ and $\Cu (A)_{\rm mDiv}$) the subset of $\Cu (A)$ consisting of the elements $[a]\in \Cu (A)$ such that $\wdiv_{2m} ([a])<\infty$ for some $m\geq 1$ (respectively $\mwdiv ([a])<\infty$ and ${\rm mDiv}([a])< \infty$). By \autoref{prp:Weak2wSum}, the subsets $\Cu (A)_{\wdiv}$ and $\Cu (A)_{\mwdiv}$ are always subsemigroups of $\Cu (A)$.
 
 Note that one has $\Cu (A)_{\wdiv}\subseteq \Cu (A)_{\mwdiv}$, and it is easy to see that $\Cu (A)_{\wdiv}= \Cu (A)_{\mwdiv}$ whenever $\Cu (A)$ is unperforated.
 
 \begin{exa}\label{exaDivCC}
 If $A= \CC$, its Cuntz semigroup is isomorphic to $\NNbar:=\NN\cup\{\infty \}$. It is readily checked that $\wdiv_m (x)<\infty$ if and only if $x\geq m$. Thus, one has $\Cu (A)_{\wdiv} = \Cu (A)_{\mwdiv} = \{ 0,2,3, \ldots , \infty\}$.
 \end{exa}
 
  \begin{exa}\label{exa:exaDivZ}
   Every tracially $\mathcal{Z}$-stable \ca{} $A$ satisfies $\Cu (A)=\Cu (A)_{\wdiv}$. More generally, recall that a \ca{} is said to be \emph{$N$-almost divisible} (see \cite{Win12NuclDimZstable}) if for each pair $x',x\in\Cu (A)$ satisfying $x'\ll x$, and $k\in\NN$, there exists $y\in\Cu (A)$ such that 
   \[
    ky\leq x,\andSep 
    x'\leq (k+1)(N+1)y.
   \]

   Thus, if $A$ is $N$-almost divisible for some $N\in\NN$, one also has $\Cu (A)=\Cu (A)_{\wdiv }$.
   
   By \cite[Theorem~3.1]{RobTik17NucDimNonSimple}, every nowhere scattered \ca{} of nuclear dimension $N$ that has no nonzero, simple, purely infinite ideal-quotients is $N$-almost divisible. This includes residually stably finite, nowhere scattered \ca{s} of finite nuclear dimension, and nowhere scattered \ca{s} of finite decomposition rank. Consequently, one has $\Cu (A)_{\wdiv}=\Cu (A)$.

   As we will see in \autoref{prp:FNucDimImpMWDiv}, $\Cu (A)_{\mwdiv}=\Cu (A)$ for any nowhere scattered \ca{} of finite nuclear dimension. In fact, one has more: $\Cu (\oplus_{i=1}^\infty A)_{\rm mDiv} = \Cu (\oplus_{i=1}^\infty A)$.
  \end{exa}

  \begin{exa}\label{exa:exaDivRR0}
   As shown in \cite[Theorem~9.1]{ThiVil21arX:NowhereScattered}, every element $x$ in the Cuntz semigroup of a nowhere scattered, real rank zero \ca{} is \emph{weakly divisible}, that is, there exist $y,z$ such that 
   $x=2y+3z$. In particular, one gets 
   \[
    2(y+z)\leq x\leq 3(y+z).
   \]
  
  Thus, it follows that $\wdiv_2(x)\leq 3$ for every $x$ and, consequently, that $\Cu (A)_{\wdiv} = \Cu (A)$.
 \end{exa}
 
 As noted in \autoref{exa:exaDivRR0} above, every element in the Cuntz semigroup of a nowhere scattered, real rank zero \ca{} has finite divisibility. Lemmas \ref{prp:SimpFinDiv} and \ref{prp:TopDimZeroDense} below show that the same is true for a sup-dense subset of (non-elementary) simple \ca{s} and nowhere scattered \ca{s} of topological dimension zero. However, this may not imply that every element in such a \ca{} has finite divisibility. 
 
 Recall that an element $x\in\Cu (A)$ is \emph{idempotent} if $x=2x$. In the Cuntz semigroup of a simple \ca{} there is only one nonzero, idempotent element, which we denote by $\infty$.
 
 \begin{lma}\label{prp:SimpFinDiv}
  Let $A$ be a simple, non-elementary \ca{}. Then, $\Cu (A)_{\wdiv}$ is sup-dense in $\Cu (A)$.
  
  If $A$ is not stably finite, then $\Cu (A)_{\wdiv}=\Cu (A)$.
 \end{lma}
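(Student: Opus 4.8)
The plan is to combine the Global Glimm Property of simple, non-elementary \ca{s} (\autoref{pgr:GGP}) with the strong comparison available in the simple setting. By \autoref{pgr:2OmegaDiv}, the Global Glimm Property ensures that every element of $\Cu (A)$ is $(2,\omega )$-divisible, while simplicity guarantees that $\infty z=\infty$ for every nonzero $z\in\Cu (A)$, where $\infty$ denotes the unique nonzero idempotent; in particular $\infty$ is the largest element of $\Cu (A)$.

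First I would show that for every $x\in\Cu (A)$ and every $x'\ll x$ one has $x'\in\Cu (A)_{\wdiv}$, which immediately yields sup-density since each $x$ is the supremum of a $\ll$-increasing sequence $x_1'\ll x_2'\ll\cdots\ll x$. Assuming $x'\neq 0$ (the case $x'=0$ being trivial), I fix a nonzero $x_0\ll x'$ and apply $(2,\omega )$-divisibility of $x'$ to $x_0$, obtaining a single element $z$ and some $k$ with $x_0\leq kz$ and $2z\leq x'$; since $x_0\neq 0$, also $z\neq 0$. By simplicity, $x'\ll x\leq \infty=\infty z=\sup_n nz$, and hence $x'\leq nz$ for some $n$. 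Now for every $x''\ll x'$ I may take $y_1=\ldots =y_n=z$, so that $x''\leq x'\leq nz$ and $2y_j=2z\leq x'$ for each $j$. Thus $x'$ is $(2,n)$-divisible, so $\wdiv_2 (x')\leq n<\infty$ and $x'\in\Cu (A)_{\wdiv}$.

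The decisive point, and the main obstacle, is the \emph{uniform} bound $n$: weak $(2,\omega )$-divisibility alone holds for every nowhere scattered \ca{}, and the improvement to a finite value of $\wdiv_2$ comes precisely from the comparison estimate $x'\leq nz$. This estimate requires $x'$ to be way-below an element dominated by $\infty z$, which is exactly what fails for a general (non way-below-contained) $x$ and is why the first part only gives sup-density.

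For the second statement the goal is to remove the way-below hypothesis, and here I would use that $A$ is not stably finite. Recall that a \ca{} which is not stably finite contains an infinite projection $p\in A\otimes\mathcal{K}$. Then $[p]$ is compact and $[p]=[p]+e$ for some nonzero $e\in\Cu (A)$; iterating gives $[p]=[p]+ne$, so $[p]\geq\sup_n ne=\infty e=\infty$ by simplicity, and therefore $[p]=\infty$. In particular $\infty$ is compact, which forces $x\ll\infty$ for \emph{every} $x\in\Cu (A)$: if $\infty\leq\sup_k d_k$ with $(d_k)_k$ increasing, then $\infty\leq d_k$ for some $k$ by compactness, whence $x\leq\infty\leq d_k$. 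With this, the argument of the second paragraph applies verbatim to an arbitrary $x\in\Cu (A)$: one produces a nonzero $z$ with $2z\leq x$ from $(2,\omega )$-divisibility of $x$, and then uses $x\ll\infty=\sup_n nz$ to obtain $x\leq nz$, so that $\wdiv_2 (x)\leq n<\infty$. This gives $\Cu (A)_{\wdiv}=\Cu (A)$, as desired.
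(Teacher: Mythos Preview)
Your proof is correct and follows essentially the same approach as the paper: both arguments use $(m,\omega)$-divisibility of $\Cu(A)$ (you via the Global Glimm Property from \autoref{pgr:GGP}, the paper via \cite[Proposition~5.2.1]{Rob13Cone}) to produce a nonzero $z$ with $2z\leq x$, and then exploit simplicity to get $x\leq nz$ from $x\ll\infty=\infty z$, yielding $\wdiv_2(x)\leq n$. For the second part, both reduce to compactness of $\infty$; the paper cites this fact, while you supply the standard direct argument via an infinite projection, which is fine.
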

 \begin{proof}
  Let $m\in\NN$. If $A$ is simple and non-elementary, it follows from \cite[Proposition~5.2.1]{Rob13Cone} that $\Cu (A)$ is $(m,\omega )$-divisible. 
  
  Let $x\in\Cu (A)$ be such that $x\ll \infty$. Then, using that $x$ is $(m,\omega )$-divisible, there exists a nonzero element $y\in\Cu (A)$ such that $my\leq x$.  Using that $x\ll \infty = \infty y$, we find $n\in\NN$ such that $x\leq ny$. This implies that $\wdiv_m (x)\leq n$. Since the subset of elements compactly contained in $\infty$ is sup-dense in $\Cu (A)$, the result follows.
  
  Now assume that $A$ is not stably finite. Then, the element $\infty$ in $\Cu (A)$ is compact; see, for example, \cite[Lemma~6.21]{Thi17:CuLectureNotes} or the proof of \cite[Theorem~2.6]{Bos22StableElements}.
  
  It follows that every element in $\Cu (A)$ is compactly contained in $\infty$. Using the argument above, we see that $\Cu (A)_{\wdiv}=\Cu (A)$.
 \end{proof}
 
 \begin{rmk}
  An inspection of the proof shows that \autoref{prp:SimpFinDiv} is slightly more general: For a \ca{} $A$, one has $\Cu (A)_{\wdiv}=\Cu (A)$ whenever every idem-multiple element in $\Cu (A)$ is compact.
 \end{rmk}
 
 As defined in \cite[Remark~2.5~(vi)]{BroPed09Limits}, a \ca{} has \emph{topological dimension zero} whenever its primitive ideal space has a basis of compact-open subsets. Examples include all \ca{s} with the ideal property.
 
 \begin{lma}\label{prp:TopDimZeroDense}
  Let $A$ be a separable nowhere scattered \ca{} of topological dimension zero. Then $\Cu (A)_{\wdiv}$ is dense in $\Cu (A)$.
 \end{lma}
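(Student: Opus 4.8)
My plan is to reduce the statement to a single clean sufficient condition for membership in $\Cu (A)_{\wdiv}$, and then to prove that the elements meeting this condition are dense, drawing on topological dimension zero only through the ideal lattice of $A$. Throughout I write $\infty z=\sup_n nz$ for the idempotent associated with the ideal $I_z$ generated by $z$, and I use the standard correspondence between ideals of $A$, ideals of $\Cu (A)$, and open subsets $U_z\subseteq\Prim (A)$; topological dimension zero says precisely that $\Prim (A)$ has a basis of compact-open sets, equivalently that this ideal lattice is \emph{algebraic}, its compact elements being the compact-open ideals.

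\emph{The engine.} The first step is to establish the non-simple analogue of the mechanism used in \autoref{prp:SimpFinDiv}: \emph{if $A$ is nowhere scattered and $z\in\Cu (A)$ satisfies $z\ll\infty z$, then $\wdiv_2(z)<\infty$}, so that $z\in\Cu (A)_{\wdiv}$. By \autoref{pgr:2OmegaDiv} the element $z$ is weakly $(2,\omega )$-divisible. Choosing $z_1'\ll z_2'\ll\cdots$ with $\sup_m z_m'=z$, applying weak $(2,\omega )$-divisibility to each $z_m'\ll z$, and accumulating the (finitely many) resulting halvable pieces over the first $m$ stages, one obtains an increasing sequence $S_1\leq S_2\leq\cdots$, each $S_m$ a finite sum of elements whose doubles lie below $z$, with $\sup_m\infty S_m=\infty z$ (since $\infty z_m'\leq\infty S_m\leq\infty z$ and $\sup_m\infty z_m'=\infty z$). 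Now $z\ll\infty z=\sup_m\infty S_m$ forces $z\leq\infty S_{m_0}$, hence $\infty S_{m_0}=\infty z$; and $z\ll\infty z=\sup_k kS_{m_0}$ forces $z\leq kS_{m_0}$ for some $k$. Writing $S_{m_0}=w_1+\cdots +w_p$ with $2w_j\leq z$, every $z''\ll z$ satisfies $z''\leq z\leq\sum_j kw_j$, which exhibits $z$ as weakly $(2,kp)$-divisible. Thus it suffices to prove that $S:=\{z\in\Cu (A): z\ll\infty z\}$ is dense.

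\emph{Density of $S$.} Given $x'\ll x$ in $\Cu (A)$, I would interpolate $x'\ll x''\ll x$. Since $x''\ll x$ implies $I_{x''}\ll I_x$ in the (algebraic) ideal lattice, topological dimension zero provides a compact-open $V$ with $U_{x''}\subseteq V\subseteq U_x$. Set $e_V=\infty_{I_V}$ and $t=x\wedge e_V$, whose existence is guaranteed by \cite[Remark~2.6]{AntPerRobThi21Edwards}; because $V\subseteq U_x$, the ideal generated by $t$ is $I_x\cap I_V=I_V$, so $t$ is full in $I_V$ and $\infty t=e_V$, while $x''\leq t\leq x$ (as $U_{x''}\subseteq V$ gives $x''\leq e_V$). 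Writing $t=\sup_j t_j$ with $t_j\ll t_{j+1}$, the ideals $I_{t_j}$ increase to $I_t=I_V$; since $V$ is compact-open, $I_V$ is a compact element of the ideal lattice, so $I_{t_{j_0}}=I_V$ for some $j_0$, i.e.\ $t_{j_0}$ is already full in $I_V$. Using $x'\ll x''\leq t$ we also get $x'\leq t_{j_1}$ for some $j_1$. Taking $s:=t_{j^*}$ with $j^*\geq j_0,j_1$, the element $s$ is full in $I_V$ (hence $\infty s=e_V$), satisfies $x'\leq s\leq t\leq x$, and obeys $s\ll t\leq e_V=\infty s$, so $s\in S$. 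Combined with the engine, $s\in\Cu (A)_{\wdiv}$ and $x'\leq s\leq x$, giving density.

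\emph{Main obstacle.} The delicate point is the density step, and specifically the reconciliation of two different notions of compactness. One is tempted to take $s=x\wedge e_V$ directly, but this fails: a compact-open $V$ makes $I_V$ a compact element of the \emph{ideal lattice}, yet $e_V=\infty_{I_V}$ need \emph{not} be a compact element of $\Cu (A)$ (already for $A$ simple and stably projectionless, where $V=\Prim (A)$ is compact-open but $\infty$ is not compact). Hence one cannot conclude $x\wedge e_V\ll e_V$, and an additional value-shrinking is unavoidable. The construction above threads this needle by passing to a term $t_{j^*}$ of a $\ll$-increasing approximation that is simultaneously \emph{value-shrunk} (so that $t_{j^*}\ll t\leq\infty t_{j^*}$) and \emph{full} on $V$ (so that $\infty t_{j^*}=e_V$); fullness at a finite stage is exactly what compactness of $I_V$ in the ideal lattice supplies. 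The remaining care is in the supporting identities — that $x''\ll x$ yields $I_{x''}\ll I_x$, that the generated ideal of $x\wedge e_V$ is $I_x\cap I_V$, and the interaction of $\wedge$ with increasing suprema — all of which rely on the ideal/$\Prim (A)$ dictionary; separability is used to make the compact-open basis countable and to phrase the way-below relation through sequences.
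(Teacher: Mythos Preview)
Your proof is correct and follows the paper's approach: both identify the elements $z$ with $z\ll\infty z$ (equivalently, those admitting some $z'\ll z\ll\infty z'$) as having finite weak divisibility and being sup-dense. The paper's engine is more streamlined---pick such a $z'$, use $z\leq Mz'$, and apply weak $(m,\omega)$-divisibility once at $z'\ll z$ to get $z\leq M(y_1+\cdots+y_n)$, avoiding your accumulation of the $S_m$---while for density it simply cites \cite[Proposition~4.18]{ThiVil22arX:Glimm}, of which your ideal-lattice argument is essentially a self-contained proof.
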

 \begin{proof}
  Take $x\in \Cu (A)$, and assume that there exists $x'\in \Cu (A)$ such that $x'\ll x\ll \infty x'$.
  
  Let $M\in\NN$ be such that $x\leq Mx'$. Using that $x$ is weakly $(m,\omega )$-divisible, one finds $y_1,\ldots , y_n$ such that $my_i\leq x$ and $x'\leq y_1+\ldots y_n$. Thus, one has $x\leq My_1+\ldots +My_n$, and we get $\wdiv_m (x)\leq Mn$.
  
  It follows from \cite[Proposition~4.18]{ThiVil22arX:Glimm} that such elements $x$ are sup-dense in $\Cu (A)$, as required.
 \end{proof}
 
 \subsection{The Global Glimm Problem}
 We finish this section by studying when $\Cu (A)_{\rm div}=\Cu (A)$ implies $\Cu (A)_{\rm Div}=\Cu (A)$. More concretely, we study when an element of finite weak divisibility has finite divisibility. Given its similarities with the Global Glimm Problem (see \autoref{pgr:GGP}), one could call this the \emph{discrete Global Glimm Problem}.
 
 Following the ideas from \cite[Section~6]{ThiVil22arX:Glimm}, we obtain:
 
 \begin{thm}\label{prp:DiscGGP}
  Let $A$ be a \ca{} satisfying $\Cu (A)_{\rm div}=\Cu (A)$. Assume that there exist $k\in\NN$ and maps $N,M\colon\NN\to\NN$ such that 
  \begin{itemize}
   \item[(i)] whenever $x'\ll x\leq ny,nz$, there exists $t\in\Cu (A)$ such that $x'\ll N(n)t$ and $t\ll y,z$;
   \item[(ii)] whenever $x'\ll x$ and $2x\ll y+2nz$, there exists $g\in\Cu (A)$ such that $2g\ll y$ and $x'\ll g+M(n)z$;
   \item[(iii)] whenever $x_1+f,x_2+f\ll y$ and $x_i'\ll x_i\ll f$ for $i=1,2$, one can find $z_1,z_2\in\Cu (A)$ such that $z_1+z_2\ll y$ and $x_1'+x_2'\ll kz_1 ,kz_2$.
  \end{itemize}
  
  Then, $\Cu (A)_{\sdiv}=\Cu (A)$.
 \end{thm}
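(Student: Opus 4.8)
The plan is to reduce everything to a single \emph{equalization} problem. Fix $x=[a]\in\Cu(A)$. By hypothesis $x\in\Cu(A)_{\wdiv}$, so there are $m,n\in\NN$ with $x$ weakly $(2m,n)$-divisible; thus for each $x'\ll x$ we get witnesses $y_1,\ldots,y_n$ with $x'\leq y_1+\ldots+y_n$ and $2my_j\leq x$. Membership in $\Cu(A)_{\sdiv}$ instead requires, for controlled $m',n'$, a \emph{single} element $z$ with $2m'z\leq x$ and $x'\leq n'z$; so the task is to replace the $n$ a priori distinct witnesses by copies of one witness, at the cost of a larger divisibility parameter and more copies. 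I would follow the weak-to-strong scheme of \cite[Section~6]{ThiVil22arX:Glimm}, which does this in the unbounded ($\omega$) case; the new point is that hypotheses (i)--(iii) inflate the number of copies only by the finitary factors $N(\cdot)$, $M(\cdot)$ and $k$, so the resulting $n'$ stays finite rather than diverging as in \autoref{exa:DirSumInfDiv}.

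The core is a two-move combination. Hypothesis (iii) is the \emph{equalizer}: given two small unequal pieces $x_1,x_2$ below a common $f$ with $x_i+f\ll y$, it returns $z_1,z_2$ with $z_1+z_2\ll y$ such that each of $kz_1,kz_2$ dominates the combined approximation $x_1'+x_2'$. Hypothesis (i) then converts this pair back into a \emph{single} witness: applying it with $w:=x_1'+x_2'$ as the covered element and $z_1,z_2$ as the two covers (legitimate since $w\ll kz_1$ and $w\ll kz_2$), one obtains $t\ll z_1,z_2$ with $w\ll N(k)t$; as $t\leq z_1$ and $t\leq z_2$, one has $2t\leq z_1+z_2\ll y$, so $t$ is a single element that is $2$-divisible into $y$ and whose $N(k)$-fold multiple dominates $w$. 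Hypothesis (ii) is the remainder-management tool that lets one feed the $n$ witnesses in one at a time: starting from $x''\leq y_1+\ldots+y_n$ with $2my_j\leq x$, it repeatedly peels off a part that is $2$-divisible into the current witness together with a bounded ($M(\cdot)$-controlled) multiple of the as-yet-unprocessed remainder.

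To assemble the proof I would fix $x'\ll x''\ll x$, apply weak $(2m,n)$-divisibility to $x''$, and then run the (ii)-accumulation over $j=1,\ldots,n$, equalizing via (iii) and collapsing to a single witness via (i) at each step, each time interpolating fresh $\ll$-approximations and invoking \axiomO{5} and \axiomO{6} to manufacture the compactly-contained data these hypotheses demand. The outcome is a single element $z$ with $2m'z\leq x$ and $x'\leq n'z$, where $m'$ and $n'$ are finite expressions in $m$, $n$, $k$ and the values of $N,M$ at the bounded parameters appearing in the construction. Since $x'\ll x$ was arbitrary, this gives $\sdiv_{2m'}(x)\leq n'<\infty$, i.e.\ $x\in\Cu(A)_{\sdiv}$, as desired.

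The step I expect to be the main obstacle is the bookkeeping of the nested compact-containment relations, not any single clever inequality. Each of (i)--(iii) applies only to data already in the $\ll$-relation, so before every use one must interpolate new approximations and pass to slightly smaller witnesses, and then verify that the precise hypotheses ($x\leq ny,nz$ in (i); $2x\ll y+2nz$ in (ii); $x_1+f,x_2+f\ll y$ with $x_i'\ll x_i\ll f$ in (iii)) actually survive the restructuring---in particular that the witnesses are taken small enough (this is where the divisibility parameter grows from $2m$ to $2m'$) for (iii) to apply with $y\leq x$. Coordinating these approximations across all $n$ steps while keeping the accumulated multiplicity finite is the technical heart of the argument.
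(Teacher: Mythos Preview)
Your high-level plan is sound---upgrade weak divisibility to (non-weak) divisibility by combining (i)--(iii) along the lines of \cite[Section~6]{ThiVil22arX:Glimm}---and your reading of the three hypotheses is accurate: (i) collapses two covers to one, (ii) manages a remainder, (iii) equalizes two pieces. But the execution you describe, an iteration of (ii)--(iii)--(i) over the $n$ witnesses $y_1,\ldots,y_n$, is not the paper's structure and meets a concrete obstacle: hypothesis (ii) demands a bound of the shape $2x\ll y+2nz$, whereas weak $(2,n)$-divisibility gives only $x''\leq\sum_j y_j$ with $2y_j\leq x$, not any doubling of $x''$. Without an additional idea you cannot manufacture the input (ii) needs, so ``peeling off one witness at a time via (ii)'' does not get started.

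The paper's proof supplies exactly this missing step, and it changes the architecture. First one reduces to $m=1$ (since $\wdiv_2(x)\leq\wdiv_{2m}(x)$). Then, rather than working with the witnesses $y_j$, one passes to their \emph{complements} via \cite[Lemma~2.2]{ThiVil22arX:Glimm}: elements $r_j$ with $y_j'+r_j\leq x\leq 2r_j$. Because now $y\leq x\leq 2r_j$ for every $j$, hypothesis (i) can be applied iteratively---this is the only place an $n$-fold iteration occurs---to collapse all the $r_j$ into a single $r$ with $y\ll N_{2,n}\,r$ and $r\leq r_j$ for all $j$. Using \axiomO{5} one obtains $c$ with $r'+c\leq x\leq r+c$; a telescoping argument (repeatedly using $y_j+r\leq x\leq r+c$) then yields $2y\leq r+2nc$, which is precisely the input (ii) requires. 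From here (ii) is applied \emph{once}, giving $g$ with $2g\ll r$ and $y'\ll g+M(n)c$. After further complement/\axiomO{6} manipulations one lands in the configuration of (iii), which is applied \emph{once} to equalize the two resulting pieces, and a final application of (i) produces the single $z$ with $2z\leq z_1+z_2\leq x$ and $x'$ bounded by an explicit multiple of $z$. So the bookkeeping you flag is indeed the crux, but the shape of the argument is: iterate (i) on complements, then use (ii) and (iii) once each---not the witness-by-witness loop you propose.
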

 \begin{proof}
  Assume that (i)-(iii) are satisfied, and let $x\in \Cu (A)$. By assumption, we have $\wdiv_{m}(x)<\infty $, which implies $\wdiv_2 (x)<\infty$. Set $n:= \wdiv_{2}(x)$. We will show that ${\rm Div}_2(x)<\infty$ by proving that 
  \[
   {\rm Div}_2(x)\leq \Big(N\big(\max\{ N (2N_{2,n}) ,M(n)\}\big)
   +N\big(N (2N_{2,n})\big)\Big)N(k),
  \]
  where $N_{2,n}:= (N\circ\stackrel{n}{\ldots}\circ N) (2)$.
  
  Let $x'\in\Cu (A)$ be such that $x'\ll x$, and take $y' , y$ such that $x'\ll y'\ll  y\ll x$. Then, since $x$ is $(2,n)$-divisible, we obtain $y_1,\ldots ,y_n$ satisfying $y\ll \sum y_j$ and $2y_j\leq x$ for each $j$. Take $y_j'\ll y_j$ such that $y\ll \sum y_j'$. By \cite[Lemma~2.2]{ThiVil22arX:Glimm}, for each $j$ we obtain an element $r_j$ such that $y_j+r_j\leq x\leq 2r_j$ and $y_j'\leq r_j$.

  Using (i), we find $r\in\Cu (A)$ satisfying $y \ll N_{2,n} r$ and $r\leq r_j$ for every $j$. Take $r'\ll r$ such that $y\ll N_{2,n} r'$. Using \axiomO{5} at $r'\ll r\leq x$, we obtain $c\in\Cu (A)$ such that $r'+c\leq x\leq r+c$.
   
   In particular, since one has $y_j +r\leq x\leq r+c$, we obtain
   \[
   \begin{split}
    2y &\leq 2y_1+\ldots +2y_n\leq (2y_1+\ldots +2y_n)+r \\
    &\leq (y_1+2y_2+\ldots +2y_n) + r+ c\leq \ldots \leq r+(2n)c.
   \end{split}
   \]
 
   Using (ii), we find elements $g',g\in\Cu (A)$ such that 
   \[
    2g\ll r,\quad 
    y'\ll g'+M(n) c,\andSep 
    g'\ll g.
   \]
 
   Applying \cite[Lemma~2.2]{ThiVil22arX:Glimm} at $2g\ll r$, we find $d\in\Cu (A)$ such that $g'+d\leq r\leq 2d$. Thus, we have $y'\ll y \leq N_{2,n} r'\leq 2N_{2,n} d$. By (i), we find $f\in\Cu (A)$ such that 
   \[
    y' \leq N(2N_{2,n}) f,\andSep 
    f\ll r',d.
   \]
   
   By \axiomO{6} applied at $x'\ll y'\ll g'+M(n) c$, one finds elements $s',s,t',t\in\Cu (A)$ such that 
   \[
    x'\ll s'+t',\quad 
    s'\ll s\leq y', g',\andSep 
    t'\ll t\leq y', M(n) c.
   \]
 
   Set $N_1:=N(\max\{ N (2N_{2,n}) ,M(n)\})$ and $N_2:=N(N (2N_{2,n}))$. Applying (i) at $t' \ll t\leq M(n) c$ and $t'\ll t\leq y'\leq N (2N_{2,n}) f$, we obtain $x_1\in\Cu (A)$ such that $x_1\ll c,f$ and $t'\ll N_1 x_1$. Take $x_1',x_1''\in\Cu (A)$ such that $x_1'\ll x_1''\ll x_1$ and $t'\leq N_1x_1'$.
   
   Using (i) again, but this time at $s'\ll s\leq y'\leq N (2N_{2,n}) f$, we obtain $x_2\in\Cu (A)$ such that $x_2\ll f,s$ and $s'\ll N_2 x_2$. As before, take $x_2',x_2''\in\Cu (A)$ such that $x_2'\ll x_2''\ll x_2$ and $s'\leq N_2x_2'$.
   
   One has 
   \[
    x_1 + f\ll c+ r'\leq x,\quad 
    x_2+f \ll s+d\leq g'+d\leq r\leq x,\andSep x_1,x_2\ll f.
   \]
   
   By (iii), we find $z_1 ,z_2\in\Cu (A)$ such that 
   \[
    z_1+z_2\leq x,\andSep 
    x_1''+x_2''\ll kz_1 ,kz_2.
   \]
   
   Applying (i) one last time, we find $z\in\Cu (A)$ such that 
   \[
    z\leq z_1,z_2 ,\andSep x_1'+x_2'\leq N(k) z.
   \]
   
   The element $z$ satisfies $2z\leq z_1+z_2\leq x$, and 
   \[
    x'\leq t'+s'\leq (N_1+N_2)(x_1'+x_2')\leq (N_1+N_2)N(k) z
    ,
   \]
   as desired.
 \end{proof}
 
 One can check that real rank zero, or stable rank one \ca{s} satisfy (i)-(iii) in \autoref{prp:DiscGGP} above. In particular, this recovers the discrete part of \cite[Theorem~5.5]{AntPerRobThi18arX:CuntzSR1}. 
 
 \begin{cor}
  Let $A$ be a \ca{}. Assume that $A$ is either of stable rank one, or real rank zero. Then, $\Cu (A)_{\rm div}=\Cu (A)$ if and only if $\Cu (A)_{\rm Div}=\Cu (A)$.
 \end{cor}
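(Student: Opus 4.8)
The corollary asserts an equivalence, and one direction is essentially formal. The plan is to observe first that $\Cu (A)_{\rm Div}=\Cu (A)$ implies $\Cu (A)_{\rm div}=\Cu (A)$ for \emph{any} \ca{}: if $[a]$ is $(2m,n)$-divisible then the witnessing elements $y_1=\cdots=y_n$ also witness weak $(2m,n)$-divisibility, so $\wdiv_{2m}([a])\leq \sdiv_{2m}([a])$, and finiteness of the latter forces finiteness of the former. Thus $\Cu (A)_{\rm Div}\subseteq \Cu (A)_{\rm div}$ unconditionally, and the whole content lies in the reverse implication.

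For the reverse implication the plan is to invoke \autoref{prp:DiscGGP}. Since $\Cu (A)_{\rm div}=\Cu (A)$ is assumed, it suffices to exhibit a constant $k\in\NN$ and maps $N,M\colon\NN\to\NN$ for which conditions (i)--(iii) of that theorem hold, separately in the real rank zero case and in the stable rank one case; one then reads off the constants $k,N,M$ from the number of summands produced at each step. In both cases I would interpret (i)--(iii) as instances of the interpolation manipulations already performed in \cite[Section~6]{ThiVil22arX:Glimm}, so the verification amounts to transcribing those arguments. For stable rank one the relevant input is the structure theory of \cite{AntPerRobThi18arX:CuntzSR1}, which endows $\Cu (A)$ with binary infima and a Riesz-type refinement: condition (i) is expected to come from a common lower bound approximating $y\wedge z$ (with $N(n)=n$ absorbing the multiplicities in $x\leq ny,nz$), and conditions (ii) and (iii) from combining \axiomO{5} and \axiomO{6} with refinement---(ii) by splitting $2x\ll y+2nz$ into a piece below $y$, which is then halved, and a bounded piece below $z$, and (iii) by forming a common refinement of the two decompositions $x_i+f\ll y$ inside $y$. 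For real rank zero the plan is instead to reduce each of (i)--(iii) to the compact elements, which are sup-dense and coincide with the classes of projections; on this submonoid the Riesz refinement of the projection monoid of a real rank zero \ca{} plays the role of interpolation, a general pair $x'\ll x$ being handled by inserting a compact element $x'\ll x''\ll x$ and running the combinatorial argument there.

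The step I expect to be hardest is condition (iii), both because it requires a \emph{simultaneous} refinement of two decompositions lying below a single upper bound $y$ and because \autoref{prp:DiscGGP} demands a single constant $k$ independent of the elements involved; keeping that constant uniform is where the bookkeeping is most delicate. In the real rank zero case there is an additional, orthogonal difficulty: the infima and refinements are carried out on compact approximants, and the way-below relations appearing in the conclusions of (i)--(iii) are not automatically preserved under these operations, so the passage between $x'$, $x''$ and $x$ must be arranged so that every $\ll$ in the statements survives. Once (i)--(iii) are in place with uniform $k,N,M$, \autoref{prp:DiscGGP} yields $\Cu (A)_{\sdiv}=\Cu (A)$, that is $\Cu (A)_{\rm Div}=\Cu (A)$, completing the equivalence.
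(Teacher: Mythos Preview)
Your plan for stable rank one is essentially the paper's: both verify conditions (i)--(iii) of \autoref{prp:DiscGGP} and appeal to the structure theory of \cite{AntPerRobThi18arX:CuntzSR1}. The paper is more specific about the sources---(i) from \cite[Lemma~5.3]{AntPerRobThi18arX:CuntzSR1} with $N(n)=2n-1$ (not $n$ as you guess), (ii) from the proof of \cite[Lemma~5.5]{ThiVil22arX:Glimm} using weak cancellation, and (iii) from the \axiomO{8}-type manipulations in \cite[7.6--7.8]{ThiVil21arX:NowhereScattered}---but your outline points in the same direction.

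For real rank zero, however, the paper does \emph{not} run (i)--(iii) through Riesz refinement on projections. It takes a shortcut: the hypothesis $\Cu(A)_{\rm div}=\Cu(A)$ already forces every element to be weakly $(2,\omega)$-divisible, hence $A$ is nowhere scattered; then \autoref{exa:exaDivRR0} (ultimately \cite[Theorem~9.1]{ThiVil21arX:NowhereScattered}) gives, for each $x$, a decomposition $x=2y+3z$, so the single element $y+z$ witnesses $2(y+z)\leq x\leq 3(y+z)$ and thus $\sdiv_2(x)\leq 3$. This bypasses \autoref{prp:DiscGGP} entirely in the real rank zero case. Your route via compact approximants and Riesz refinement is in principle viable---indeed the paper remarks just before the corollary that real rank zero \ca{s} do satisfy (i)--(iii)---but it is considerably more laborious, and the ``hardest step'' you flag (uniform $k$ in (iii)) simply disappears under the paper's approach.
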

 \begin{proof}
  If $A$ has real rank zero, use \autoref{exa:exaDivRR0}. For stable rank one \ca{s}, this just amounts to an inspection of a number of proofs:
  
  \cite[Lemma~5.3]{AntPerRobThi18arX:CuntzSR1} implies that (i) is  satisfied with $N=2n-1$.
  
  The proof of \cite[Lemma 5.5]{ThiVil22arX:Glimm}, but using weak cancellation instead of residually stably finiteness, gives (ii) in \autoref{prp:DiscGGP} with $k=2$.
  
  Finally, (iii) follows from an inspection of \cite[7.6-7.8]{ThiVil21arX:NowhereScattered} using that, when $A$ has stable rank one,  \axiomO{8} can be used without the assumption of an element being idempotent; see Definition~7.2 and Proposition~7.5 in \cite{ThiVil21arX:NowhereScattered} for more details.
 \end{proof}

 \begin{rmk}
 One can actually show that $\Cu (A)_{\rm mdiv}=\Cu (A)$ if and only if $\Cu (A)_{\rm mDiv}=\Cu (A)$ whenever $A$ is of stable rank one. Indeed, given $x\in\Cu (A)$ such that $\wdiv_{2m}(mx)<\infty$, it follows directly from \cite[Theorem~5.5]{AntPerRobThi18arX:CuntzSR1} that $\sdiv_{2m}(mx)<\infty$.
 
 The same result holds for real rank zero \ca{s} by \autoref{exa:exaDivRR0}.
 \end{rmk}
 
 \begin{qst}
  Let $A$ be a \ca{} such that $\Cu (A)=\Cu_{\rm mdiv} (A)$. When does $A$ satisfy $\Cu (A)=\Cu_{\rm mDiv} (A)$?
 \end{qst}
 
 \section{Bounded divisibility}\label{sec:BoundDiv}
 
 We focus in this section on \ca{s} where there is a global bound for the divisibility of the elements. That is, we look at those \ca{s} $A$ such that there exist $n,m\in\NN$ with $\wdiv_m ([a])\leq n$ for every $a\in A_+$. More generally, we focus on those  that satisfy $\sup_{a\in A_+}\wdiv_{2m} (m[a])<\infty$ for some  $m\in\NN$. As we will see in \autoref{thm:MainMA}, these algebras will have a nowhere scattered multiplier algebra.
 
 We begin the section by providing sufficient conditions for this bounded divisibility to occur (Propositions \ref{prp:FNucDimImpMWDiv} and \ref{prp:SurjMap}), and by showing that \ca{s} with this property are closed under extensions (\autoref{prp:BoundDivExt}). Note that we have already seen some examples, such as tracially $\mathcal{Z}$-stable \ca{s}, or \ca{s} of real rank zero; see Examples \ref{exa:exaDivZ} and \ref{exa:exaDivRR0}.
 
 In \autoref{prp:BoundDivSoft}, we prove that the set of soft elements characterizes when a \ca{} has multiples of bounded divisibility, and we use this to deduce the property for simple, weakly cancellative \ca{s} of Cuntz covering dimension zero; see \autoref{prp:Dim0BoundDiv}.\vspace{0.2cm}
 
 As mentioned in \cite{RobTik17NucDimNonSimple}, it is unclear if $\dim_{\rm nuc} (A)<\infty$ (with $A$ nowhere scattered) implies $\wdiv (a)<\infty$ for every $a\in A_+$. However, one stills gets the following result, which is a direct consequence of \cite[Proposition~3.2]{RobTik17NucDimNonSimple}.
 
 \begin{prp}\label{prp:FNucDimImpMWDiv}
  Let $A$ be a nowhere scattered \ca{} of finite nuclear dimension at most  $k$. Then, for every $[a]\in \Cu (A)$, one has 
  \[
   \wdiv_{4(k+1)} (2(k+1)[a])\leq 8(k+1)^2.
  \]
  
  In particular, in a nowhere scattered \ca{} of finite nuclear dimension, one always gets $\mwdiv ([a])<\infty$.
 \end{prp}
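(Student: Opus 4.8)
The ``in particular'' assertion is immediate from the displayed inequality: taking $m=2(k+1)$ in the definition of $\mwdiv$ gives $\mwdiv([a])\leq \wdiv_{4(k+1)}(2(k+1)[a])\leq 8(k+1)^2<\infty$. So the plan is to prove the displayed bound, that is, to show that $x:=2(k+1)[a]$ is weakly $(4(k+1),8(k+1)^2)$-divisible in the sense of \autoref{pgr:2OmegaDiv}. Since weak divisibility is tested against elements way-below $x$, I would start from an arbitrary $x'\ll x$ and use that $x=\sup_{\varepsilon>0}2(k+1)[(a-\varepsilon)_+]$ together with the definition of $\ll$ to fix $\varepsilon>0$ with $x'\leq 2(k+1)[(a-\varepsilon)_+]$. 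This absorbs the $\varepsilon$-approximation into the statement and reduces everything to producing, for the compactly-contained datum $(a-\varepsilon)_+$, a controlled cover of $2(k+1)[(a-\varepsilon)_+]$ by pieces lying $4(k+1)$-fold below the full element $x=2(k+1)[a]$.

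The core input is \cite[Proposition~3.2]{RobTik17NucDimNonSimple}, which I would invoke for $a$ and this $\varepsilon$. The hypothesis $\dim_{\rm nuc}(A)\leq k$ supplies a decomposition of $[(a-\varepsilon)_+]$ into $k+1$ ``colours'', each colour arising from a c.p.c.\ order-zero map and hence carrying a built-in $2$-divisibility, while nowhere scatteredness is what guarantees these order-zero pieces are genuinely nontrivial (ruling out the character/one-dimensional obstruction that defeats scattered algebras such as $\CC$). Heuristically each colour yields an element $y$ with $2y\leq[a]$ contributing $2y$ to a sum dominating $[(a-\varepsilon)_+]$, so that $[(a-\varepsilon)_+]\leq y_1+\dots+y_{2(k+1)}$ with $4(k+1)y_j\leq 2[a]\leq 2(k+1)[a]=x$. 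Taking $2(k+1)$ copies of such a cover then bounds $x'\leq 2(k+1)[(a-\varepsilon)_+]$ by a sum of the $y_j$'s, each still $4(k+1)$-fold below $x$, which is exactly the weak $(4(k+1),\,\cdot\,)$-divisibility of $x$; the remaining task is to read off from \cite[Proposition~3.2]{RobTik17NucDimNonSimple} that the number of pieces is at most $8(k+1)^2$. Here \autoref{prp:Weak2wSum} is the natural tool for repackaging and recounting the cover via additivity under sums and monotonicity in the degree.

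The main obstacle is precisely the point flagged before the statement: one cannot expect to bound $\wdiv_{4(k+1)}([a])$ for the element $[a]$ itself (this is the open, Global Glimm-type question for finite nuclear dimension), so the order-zero decomposition of \cite[Proposition~3.2]{RobTik17NucDimNonSimple} cannot be arranged with all its images cleanly Cuntz-below $a$; the colours genuinely overlap and the decomposition is only approximate. Passing to the multiple $2(k+1)[a]$ is exactly what lets these overlaps be absorbed below the larger element $x$, and accounting for them honestly is what turns the naive count $4(k+1)^2$ into $8(k+1)^2$. Thus the delicate step is the quantitative bookkeeping of colours and overlaps inside \cite[Proposition~3.2]{RobTik17NucDimNonSimple}; once the constants $4(k+1)$ (the degree) and $8(k+1)^2$ (the number of pieces) are matched, the displayed inequality holds and hence $\mwdiv([a])<\infty$ for every $a\in A_+$, as claimed.
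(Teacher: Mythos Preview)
Your approach is essentially the paper's: reduce via an $\varepsilon$-cutdown and apply \cite[Proposition~3.2]{RobTik17NucDimNonSimple}. Two points where the paper's execution differs from your sketch. First, you omit a preliminary reduction that is needed before the citation can be invoked: the paper passes to the hereditary sub-\ca{} $\overline{a(A\otimes\mathcal{K})a}$ (which preserves both nowhere scatteredness and the nuclear-dimension bound) so that $a$ becomes full; only then does \cite[Proposition~3.2]{RobTik17NucDimNonSimple} apply. Second, your colour-by-colour heuristic and the speculation about ``overlap accounting'' turning $4(k+1)^2$ into $8(k+1)^2$ are off the mark. That proposition directly hands you a \emph{single} element $[b]$ satisfying
\[
[(a-\varepsilon)_+]\leq 4(k+1)[b],\qquad 4(k+1)[b]\leq 2(k+1)[a].
\]
Multiplying the first inequality by $2(k+1)$ gives $2(k+1)[(a-\varepsilon)_+]\leq 8(k+1)^2[b]$, so the constants $4(k+1)$ and $8(k+1)^2=2(k+1)\cdot 4(k+1)$ drop out immediately with no separate bookkeeping---indeed one actually obtains the stronger bound $\sdiv_{4(k+1)}(2(k+1)[a])\leq 8(k+1)^2$. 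Neither \autoref{prp:Weak2wSum} nor any repackaging is needed.
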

 \begin{proof}
  If $A$ is nowhere scattered and of nuclear dimension at most $k$, so is $A\otimes \mathcal{K}$.
 
  Let $[a]\in \Cu (A)$. Upon passing to the hereditary \ca{} $\overline{a(A\otimes \mathcal{K})a}$, we may assume that $a$ is full in $A\otimes \mathcal{K}$; note that $\overline{a(A\otimes \mathcal{K})a}$ is still nowhere scattered by \cite[Proposition~4.1]{ThiVil21arX:NowhereScattered}, and has nuclear dimension at most $k$ by \cite[Proposition~2.5]{WinZac10NuclDim}. 
  
  Recall from \cite[Theorem~3.1]{ThiVil21arX:NowhereScattered} that $A\otimes\mathcal{K}$ is nowhere scattered if and only if it has no finite-dimensional irreducible representations. By \cite[Proposition~3.2]{RobTik17NucDimNonSimple}, there exists $[b]\in \Cu (A)$ such that 
  \[
   [(a-\varepsilon )_+]\leq 4(k+1)[b],\andSep 
   4(k+1)[b]\leq 2(k+1)[a].
  \]

  Thus, we obtain that $2(k+1)[(a-\varepsilon )_+]\leq 8(k+1)^2[b]$. Since $2(k+1)[a]$ can be written as the supremum of the elements $2(k+1)[(a-\varepsilon )_+]$ with $\varepsilon\to 0$, it follows that $\wdiv_{4(k+1)} (2(k+1)[a])< 8(k+1)^2$, as desired.
 \end{proof}
 
  Let us denote by $F(A)$ the central sequence algebra, as defined in \cite[Definition~1.1]{Kir06CentralSeqPI}. Given a separable, unital \ca{} $A$, it is not known if $A$ being $\mathcal{Z}$-stable is equivalent to $F(A)$ admitting no characters; see \cite{KirRor15CentralSeqCharacters}. Although this question remains open, one does have the following result from \cite[Part~II,~Article~B,~Proposition~2.8]{Chris17PhD}.
 
 \begin{prp}[\cite{Chris17PhD}]
  Let $A$ be a separable \ca{}. Assume that $F(A)$ admits no characters. Then, for every $m\in\NN$ there exists $n$ such that $\wdiv_m ([a])\leq n$ for all $a\in A_+$.
 \end{prp}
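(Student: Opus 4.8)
The plan is to organize the argument as a forward chain driven entirely by $F(A)$: first deduce that the unit of $F(A)$ has finite weak divisibility, then multiply this single division into every element of $A$ through Kirchberg's central multiplication map, and finally descend from the sequence algebra back to $A$. The crucial point is that the number of pieces $n$ will be extracted from $F(A)$ alone, which is what makes it independent of $a$. To motivate the first step, recall that a character $\chi$ on a unital \ca{} $B$ obstructs weak divisibility of $[1_B]$: it induces a dimension function factoring through $\Cu(\CC)=\NNbar$, hence $\NNbar$-valued, with $d_\chi([1_B])=1$; weak $(m,n)$-divisibility would produce pieces $y_j$ with $m\,d_\chi(y_j)\leq d_\chi([1_B])=1$, forcing $d_\chi(y_j)=0$ and contradicting $d_\chi([(1-\eps)_+])=1$. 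The nontrivial converse of Robert and R\"ordam \cite{RobRor13Divisibility} states that a unital \ca{} with no characters satisfies $\wdiv_m([1_B])<\infty$ for every $m$. Since $F(A)$ is unital (with unit represented by a quasicentral approximate identity), the hypothesis gives, for each fixed $m$, an integer $n=n(m)$ with $\wdiv_m([1_{F(A)}])\leq n$.

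Next I would transfer this divisibility into $A$. Fixing a free ultrafilter $\omega$ and the associated sequence algebra $A_\omega$, so that $F(A)=(A_\omega\cap A')/\mathrm{Ann}(A)$, Kirchberg's central multiplication provides a \starHm{} $\Phi\colon A\otimes_{\max}F(A)\to A_\omega$ with $\Phi(a\otimes 1_{F(A)})=\bar a$, where $\bar a$ is the image of the constant sequence \cite{Kir06CentralSeqPI}. Applying $\Cu(-)$ and precomposing with the natural map $([a],[x])\mapsto[a\otimes x]$ from $\Cu(A)\times\Cu(F(A))$ into $\Cu(A\otimes_{\max}F(A))$, which is additive, order- and supremum-preserving in each variable \cite{AntPerThi18TensorProdCu}, one obtains $\beta\colon\Cu(A)\times\Cu(F(A))\to\Cu(A_\omega)$ with $\beta([a],[1_{F(A)}])=[\bar a]$. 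Given $a\in A_+$ and a witness $y_1,\ldots,y_n$ of weak $(m,n)$-divisibility of $[1_{F(A)}]$, the elements $\beta([a],y_j)$ satisfy $m\,\beta([a],y_j)=\beta([a],my_j)\leq\beta([a],[1_{F(A)}])=[\bar a]$ and cover $[\bar a]$ in the weak sense; hence $\wdiv_m([\bar a])\leq n$ in $\Cu(A_\omega)$, with the same $n$ for every $a$.

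Finally I would descend from $A_\omega$ to $A$. Representing each $y_j$ by a sequence of positive elements in $(A\otimes\Cpct)_+$ and using that $\bar a$ is the constant sequence together with separability of $A$, a reindexing argument produces, for each $\eps>0$, elements $b_1,\ldots,b_n\in(A\otimes\Cpct)_+$ with $(a-\eps)_+\precsim b_1\oplus\cdots\oplus b_n$ and $m$ mutually orthogonal copies of each $b_j$ dominated by $a$; letting $\eps\to 0$ yields $\wdiv_m([a])\leq n$ in $\Cu(A)$. As $n=n(m)$ depends only on $m$, this is the required uniform bound, and \autoref{prp:Weak2wSum} keeps all the auxiliary divisibility numbers finite under the sums and passages to suprema that appear.

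The main obstacle is the transfer step: one must verify that the central multiplication interacts correctly with the Cuntz order so that a \emph{single} division of $1_{F(A)}$ multiplies into a division of every $[\bar a]$ using the same number of pieces. This is exactly where centrality of $F(A)$ is indispensable, since orthogonality of the $m$ copies is preserved only because $F(A)$ commutes with $A$; the subsequent ultrafilter reindexing in the descent, while standard in this setting, is technically delicate because the Cuntz semigroup of $A_\omega$ cannot be computed fibrewise and one must work directly with the chosen representing sequences.
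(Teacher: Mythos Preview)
Your argument is correct and, in fact, supplies considerably more detail than the paper does. The paper's proof is essentially a citation: it invokes \cite[Corollary~5.6]{RobRor13Divisibility} to obtain $\wdiv_2([1_{F(A)}])<\infty$, then quotes \cite[Part~II,~B,~Proposition~2.8]{Chris17PhD} as a black box to conclude $\wdiv_2([a])\leq n_0$ uniformly in $a\in A_+$, and finally bootstraps from $m=2$ to general $m$ by an inductive argument (citing \cite[Lemma~3.4]{ThiVil22arX:Glimm}). Your proof, by contrast, unpacks precisely that black box: the Kirchberg multiplication map $A\otimes_{\max}F(A)\to A_\omega$ together with the biadditive pairing on Cuntz semigroups is exactly the mechanism underlying Christensen's transfer, and your ultrapower descent is the standard way to extract elements of $A$ from the resulting inequalities in $\Cu(A_\omega)$. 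You also treat general $m$ directly rather than inducting up from $m=2$. The trade-off is that the paper's proof is two lines while yours is a page, but yours is self-contained and makes the role of centrality of $F(A)$ transparent. One small point worth making explicit in your descent: to get both $[(a-\varepsilon)_+]\leq\sum z_j$ and $mz_j\leq[a]$ with the \emph{same} $z_j$, you should first pass from $[c_j]$ to $[(c_j-\delta)_+]$ using $[(\bar a-\varepsilon/2)_+]\ll\sum[c_j]$ before lifting, so that the single index $k$ you select witnesses both relations simultaneously; this is implicit in what you wrote but deserves a sentence.
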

 \begin{proof}
  It follows from \cite[Corollary~5.6]{RobRor13Divisibility} that $F(A)$ has no characters if and only if $\wdiv_2 ([1])<\infty $ in $\Cu (F(A))$. Let $n_0=\wdiv_2 ([1])$. \cite[Part~II,~B,~Proposition~2.8]{Chris17PhD} implies that $\wdiv_2 ([a])\leq n_0 $ for every $a\in A_+$.
  
  An inductive argument now shows that $\sup_{a\in A_+}\wdiv_m ([a])<\infty$ for every $m\in\NN$; see, for example, \cite[Lemma~3.4]{ThiVil22arX:Glimm}.
 \end{proof}
 
 Given $[a],[b]\in\Cu (A)$, recall that we write $[a]<_s [b]$  whenever there exists $\gamma <1$ such that $\lambda ([a])\leq \gamma \lambda ([b])$ for every $\lambda \in F(\Cu (A))\cong {\rm QT}(A)$; see \cite[2.1]{RobTik17NucDimNonSimple}.

Let $k\in\NN$. As defined in \cite[Definition~2.1]{Win12NuclDimZstable}, a \ca{} $A$ is said to have \emph{$k$-comparison} if for every $[a],[b_0],\ldots ,[b_k]\in\Cu (A)$ such that $[a]<_s [b_i]$ for all $i$, we have $[a]\leq \sum_i [b_i]$.

Also recall that $A$ has a \emph{surjective rank map} if every element in $L (F(\Cu (A)))$ can be realized as a rank function; see \cite[Section~7]{AntPerRobThi18arX:CuntzSR1} for details.
 
 \begin{prp}\label{prp:SurjMap}
 Let $A$ be a nowhere scattered \ca{} with $k$-comparison. Assume that the family of separable sub-\ca{s} of $A$ with a surjective rank map is $\sigma$-complete and cofinal. Then, $\sdiv_{2(k+1)}((k+1)[a])\leq 4(k+1)^3$ for every $[a]\in\Cu (A)$.
 \end{prp}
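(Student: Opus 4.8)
The plan is to produce, for a fixed $[a]\in\Cu (A)$, a \emph{single} element $y\in\Cu (A)$ with $\lambda (y)=\tfrac{1}{3(k+1)}\lambda ([a])$ for every $\lambda\in F(\Cu (A))\cong {\rm QT}(A)$, and then to verify that this one $y$ witnesses $(2(k+1),4(k+1)^3)$-divisibility of $(k+1)[a]$ simultaneously for all $x'\ll (k+1)[a]$. The fraction $\tfrac{1}{3(k+1)}$ is dictated by the requirement that $\tfrac{1}{4(k+1)}<\tfrac{1}{3(k+1)}<\tfrac{1}{2(k+1)}$: the upper bound leaves room to push $2(k+1)y$ strictly below $[a]$, and the lower bound leaves room to cover any $x'\ll (k+1)[a]$ by a controlled multiple of $y$; both conclusions will then come from $k$-comparison.

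To build $y$ I would use the surjective rank map, which is available only on the separable members of the given family. First, using that the family is $\sigma$-complete and cofinal, I would choose a separable sub-\ca{} $B\subseteq A$ in the family that contains a representative of $a$, so that $[a]\in\Cu (B)$. The function $\lambda\mapsto\tfrac{1}{3(k+1)}\lambda ([a])$ is a positive multiple of the rank of $[a]$ and hence lies in the cone $L(F(\Cu (B)))$; surjectivity of the rank map of $B$ then yields $y\in\Cu (B)$ realizing it. Transporting $y$ to $\Cu (A)$ through the morphism induced by $B\hookrightarrow A$, and using that the restriction map $F(\Cu (A))\to F(\Cu (B))$ is adjoint to it, I would check that $\lambda (y)=\tfrac{1}{3(k+1)}\lambda ([a])$ continues to hold for every $\lambda\in F(\Cu (A))$, precisely because both $y$ and $[a]$ are images of classes in $\Cu (B)$.

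The remainder is bookkeeping with $k$-comparison. Since $\lambda\big(2(k+1)y\big)=\tfrac{2}{3}\lambda ([a])$ for every $\lambda$, one has $2(k+1)y<_s [a]$ with $\gamma=\tfrac23$; applying $k$-comparison to $2(k+1)y<_s [a]=\cdots =[a]$ ($k+1$ copies on the right) gives $2(k+1)y\leq (k+1)[a]$. For the other inequality, any $x'\ll (k+1)[a]$ satisfies $\lambda (x')\leq (k+1)\lambda ([a])$, while $\lambda\big(4(k+1)^2y\big)=\tfrac{4(k+1)}{3}\lambda ([a])$, so $x'<_s 4(k+1)^2y$ with $\gamma=\tfrac34$; $k$-comparison applied to $x'<_s 4(k+1)^2y=\cdots =4(k+1)^2y$ then gives $x'\leq (k+1)\cdot 4(k+1)^2y=4(k+1)^3y$. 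As $y$ does not depend on $x'$, this establishes $\sdiv_{2(k+1)}\big((k+1)[a]\big)\leq 4(k+1)^3$.

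The delicate part is the descent to $B$, not the arithmetic: I expect the main work to lie in making the passage to a separable member of the family rigorous (choosing $B$ to contain $a$, and checking that $\Cu (B)\to\Cu (A)$ together with the restriction $F(\Cu (A))\to F(\Cu (B))$ are compatible enough that the realized rank of $y$ survives the transfer). Two boundary issues must also be dispatched, both harmlessly: at functionals with $\lambda ([a])\in\{0,\infty\}$ the rank identity forces $\lambda (y)\in\{0,\infty\}$ and all the comparisons above still read correctly, and when ${\rm QT}(A)=\emptyset$ the relation $<_s$ holds vacuously, so that $k$-comparison makes both inequalities automatic for any $y$. Finally, I would note that nowhere scatteredness is not invoked separately here: the surjective rank map already forces the rank functions to be divisible enough to realize $\tfrac{1}{3(k+1)}\lambda ([a])$, and it is this hypothesis, imported to $A$ through the cofinal family, that does the real work.
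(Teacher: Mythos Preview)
Your proposal is correct and follows essentially the same route as the paper: reduce to a separable sub-\ca{} $B$ in the given family containing a representative of $a$, use surjectivity of the rank map on $B$ to realize $\tfrac{1}{3(k+1)}\widehat{[a]}$ by some $y\in\Cu(B)$, and then invoke $k$-comparison with the constants $\gamma=\tfrac{2}{3}$ and $\gamma=\tfrac{3}{4}$ to obtain $2(k+1)y\leq (k+1)[a]\leq 4(k+1)^3 y$. The only difference is that the paper additionally arranges (via the L\"owenheim--Skolem machinery of \cite{ThiVil21DimCu2} and \cite{FarHarLupRobTikVigWin21ModelThy}) that $B$ is nowhere scattered and that $\Cu(B)\to\Cu(A)$ is an order-embedding, establishing the inequalities first in $\Cu(B)$; your observation that the rank identity survives passage to $\Cu(A)$ via the restriction $F(\Cu(A))\to F(\Cu(B))$ lets you apply $k$-comparison directly in $A$ and bypass that step.
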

 \begin{proof}
  Let $x:=[a]$. By \cite[Proposition~6.1]{ThiVil21DimCu2} and \cite[Proposition~3.8.1]{FarHarLupRobTikVigWin21ModelThy}, there exists a separable sub-\ca{} $B\subseteq A$ that has a surjective rank map, is nowhere scattered, contains $x$, and is such that the induced inclusion map $\Cu (B)\to \Cu (A)$ is an order-embedding.
  
  By definition, every element in $L (F(\Cu (B)))$ can be realized as a rank function. Thus, there exists $y\in\Cu (B)$ such that $\widehat{y}=\frac{1}{3(k+1)}\widehat{x}$. This implies, in particular, that
  \[
   2(k+1)\widehat{y}\leq \frac{2}{3}\widehat{x},\andSep 
   \widehat{x}\leq 3(k+1)\widehat{y}= \frac{3}{4}(4(k+1))\widehat{y}.
  \]

  Applying $k$-comparison, one obtains $2(k+1)y\leq (k+1)x$ and $x\leq 4(k+1)^2 y$ in $\Cu (B)$. Using that the map $\Cu (B)\to \Cu (A)$ is an order-embedding, the same is true in $\Cu (A)$. Thus, we have 
  \[
   2(k+1) y\leq (k+1)x\leq 4(k+1)^3 y,
  \]
  as desired.
 \end{proof}
 
 \begin{rmk}\label{rmk:Sr1SurjMap}
  As shown in \cite[Theorem~7.14]{AntPerRobThi18arX:CuntzSR1}, a separable, nowhere scattered \ca{} of stable rank one always has a surjective rank map. Thus, \autoref{prp:SurjMap} above applies to all stable rank one \ca{s} with $k$-comparison.
  
  In fact, it was shown in \cite[Theorem~8.12]{AntPerRobThi18arX:CuntzSR1} that a separable, nowhere scattered \ca{} of stable rank one has $k$-comparison if and only if it has strict comparison. Consequently, we get that $\wdiv_2 (x)\leq 4$ for every $x\in\Cu (A)$. Further, a small modification of the proof shows that $\Cu (A)$ is almost divisible (ie. $0$-almost divisible).
 \end{rmk}
 
The following is a generalization of \cite[Proposition~4.2]{ThiVil21arX:NowhereScattered}.
 
 \begin{prp}\label{prp:BoundDivExt}
  Let $I$ be an ideal of a \ca{} $A$. Assume that there exist $m_0,m_1,n_1,n_2\in\NN$ such that 
  \[
   \wdiv_{2m_0}(m_0[a])\leq n_0,\andSep 
   \wdiv_{2m_1}(m_1 [b])\leq n_1
  \]
  for every $a\in (A/I)_+$ and every $b\in I_+$. Then, 
  \[
   \wdiv_{2m_0 m_1}((m_0 m_1 )[c])\leq m_0 (m_1 n_0 +n_1 )
  \]
  for every $c\in A_+$.
  
  Conversely, $\wdiv_{2m}(m[a]),\wdiv_{2m}(m[b])\leq n$ for every $a\in (A/I)_+$ and $b\in I_+$ whenever $\sup_{c\in A_+}\wdiv_{2m}(m[c])\leq n$ for some $m,n\in\NN$.
 \end{prp}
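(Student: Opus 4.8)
The plan is to exploit the behaviour of the Cuntz semigroup along the extension $0\to I\to A\to A/I\to 0$, generalizing the unbounded argument of \cite[Proposition~4.2]{ThiVil21arX:NowhereScattered} while tracking the number of pieces. Recall (see \cite{CowEllIva08CuInv} and \cite{AntPerThi18TensorProdCu}) that $\Cu(I)$ sits inside $\Cu(A)$ as a downward hereditary subsemigroup closed under suprema, that the induced map $\bar\pi\colon\Cu(A)\to\Cu(A/I)$ is a surjective \CuMor{} with $\bar\pi(e)=0$ exactly when $e\in\Cu(I)$, and that $\Cu(A/I)\cong\Cu(A)/\Cu(I)$. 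I will use two standard lifting features of $\bar\pi$: first, if $\bar v\ll\bar\pi(u)$ then there is $v\ll u$ with $\bar\pi(v)=\bar v$; second, if $\bar\pi(u)\le\bar\pi(u')$ then for every $u''\ll u$ there is $w\in\Cu(I)$ with $u''\le u'+w$.

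The converse is the easy half. Assume $\sup_{c\in A_+}\wdiv_{2m}(m[c])\le n$. For $b\in I_+$, any pieces $y_j$ dividing $m[b]$ in $\Cu(A)$ satisfy $(2m)y_j\le m[b]\in\Cu(I)$, so $y_j\in\Cu(I)$ by downward heredity; since the inclusion $\Cu(I)\hookrightarrow\Cu(A)$ preserves $\ll$ and is an order-embedding, the same witnesses give $\wdiv_{2m}(m[b])\le n$ computed in $\Cu(I)$. For $a\in(A/I)_+$, choose a positive lift $\tilde a\in A_+$; given $x'\ll m[a]=\bar\pi(m[\tilde a])$, the first lifting feature yields $u'\ll m[\tilde a]$ with $\bar\pi(u')=x'$, and dividing $m[\tilde a]$ in $\Cu(A)$ and applying $\bar\pi$ produces the required decomposition of $m[a]$.

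For the forward direction, fix $c\in A_+$, put $x=[c]$ and $\bar x=\bar\pi(x)$, and take $x'\ll(m_0m_1)x$, so that $x'\le(m_0m_1)[(c-\eps)_+]$ for some $\eps>0$. First I would divide in the quotient at multiple $m_0$: from $\wdiv_{2m_0}(m_0\bar x)\le n_0$ I obtain $\bar z_1,\dots,\bar z_{n_0}$ with $m_0[(\pi(c)-\eps)_+]\le\sum_i\bar z_i$ and $(2m_0)\bar z_i\le m_0\bar x$. Lifting suitable $\ll$-approximations of the $\bar z_i$ and using the order features above, I would produce $z_1,\dots,z_{n_0}\in\Cu(A)$ and an error $e\in\Cu(I)$ with $e\le m_0x$ such that $m_0[(c-\eps')_+]\le\sum_iz_i+e$, while $(2m_0)z_i\le m_0x$ holds only modulo $\Cu(I)$. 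Multiplying the covering by $m_1$ gives $x'\le\sum_im_1z_i+m_1e$. The error is handled inside the ideal: since $e\le m_0x=x+\dots+x$, axiom \axiomO{6} splits $e\le\sum_{k=1}^{m_0}e_k$ with $e_k\le x$ and $e_k\in\Cu(I)$; as $e_k\le[c]$ with $c\in A_+$, every $\ll$-approximant of $e_k$ is the class of a positive element of $\overline{cAc}\cap I\subseteq I_+$, so \autoref{prp:Weak2wSum}(v) together with the ideal hypothesis gives $\wdiv_{2m_1}(m_1e_k)\le n_1$. Dividing each $m_1e_k$ yields pieces $q$ with $(2m_1)q\le m_1e_k\le m_1x$, hence $(2m_0m_1)q\le m_0m_1x$; these are $m_0n_1$ genuine pieces covering $m_1e$.

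The remaining, and main, difficulty is to convert the lifted quotient relations $(2m_0)z_i\le m_0x$ \emph{modulo} $\Cu(I)$ into genuine inequalities usable as pieces for $(m_0m_1)x$. The obstruction is that $\bar\pi$ need not respect the smallness encoded by $(2m_0)\bar z_i\le m_0\bar x$: a lift $z_i$ only satisfies $(2m_0m_1)z_i\le m_0m_1x$ after an ideal overflow is removed. I would clear this overflow with \axiomO{5} and \axiomO{6}, splitting it below $x$ and absorbing it through the same ideal estimate as above (again invoking \autoref{prp:Weak2wSum}(v), and realigning multiples via the monotonicity $\wdiv_{km}(k[a])\le\wdiv_m([a])$ of \autoref{prp:DivStComp}). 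This clearing replaces the $m_1$ copies of each $z_i$ by valid pieces at the cost of a further factor $m_0$, producing $m_0m_1n_0$ quotient pieces; together with the $m_0n_1$ ideal pieces this yields exactly $m_0(m_1n_0+n_1)$, as claimed. The bookkeeping of this overflow-clearing is precisely where the stated constant is won or lost, and is the step I expect to require the most care.
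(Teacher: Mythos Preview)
Your overall architecture matches the paper's: lift the quotient division, separate an ideal error, divide the error inside $I$, and count. The converse and the ideal-error part are fine. But the step you flag as ``requiring the most care'' is in fact a genuine gap, and your proposed tools (\axiomO{5} and \axiomO{6}) are not enough to close it.

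Concretely: after lifting you have $z_i\in\Cu(A)$ with $(2m_0)z_i\le m_0x+w$ for some $w\in\Cu(I)$, and you need to replace each $z_i$ by some $\tilde z_i$ with the \emph{genuine} inequality $(2m_0)\tilde z_i\le m_0x$ while still controlling $z_i'\le\tilde z_i+(\text{ideal})$. Applying \axiomO{6} to $(2m_0)z_i'\ll(2m_0)z_i\le m_0x+w$ only decomposes the \emph{sum} $(2m_0)z_i'$ into a piece below $m_0x$ and a piece in $\Cu(I)$; it does not produce a single element whose $2m_0$-th multiple lies below $m_0x$. No combination of \axiomO{5}/\axiomO{6} performs this ``un-multiplication'' modulo an ideal. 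The paper's proof handles this by first arranging the ideal overflow to be an \emph{idempotent} $w=2w$ (always possible in $\Cu(I)$), and then invoking \cite[Proposition~7.8]{ThiVil21arX:NowhereScattered}, which is precisely the statement that from $(2m_0)y_j'\ll(2m_0)y_j\le m_0x+w$ with $w$ idempotent one can extract $z_j$ with $(2m_0)z_j\ll m_0x$ and $y_j''\ll z_j'+w$. That proposition (an \axiomO{8}-type result specific to Cuntz semigroups of \ca{s}) is the missing ingredient in your argument; once you have it, the idempotency of $w$ also collapses the accumulated ideal errors into a single one, so a single application of \axiomO{6} gives one $r\le x''$ in $\Cu(I)$ to divide, rather than your detour of splitting $e\le m_0x$ into $m_0$ pieces before dividing. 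The final count is then exactly $m_0m_1n_0+m_0n_1$, with no further ``clearing'' needed.
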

 \begin{proof}
  It follows from \cite{CiuRobSan10CuIdealsQuot} that $\Cu (I)$ can be identified with an ideal of $\Cu (A)$, and that $\Cu (A/I)$ is naturally isomorphic to the quotient $\Cu (A)/\Cu (I)$. We will make these identifications without explicitly writing the isomorphisms.
 
  Let $c\in A_+$, and set $x:=[c]$. Denote by $\Cu (\pi )\colon \Cu (A)\to \Cu (A/I)$ the induced quotient map. Given $x',x''$ in $\Cu (A)$ such that $x'\ll x''\ll x$, we can apply the bounded divisibility of $\Cu (A/I)$ to obtain elements $y_j'',y_j',y_j\in \Cu (A)$ for $j=1,\ldots ,n_0$ such that 
  \[
   (2m_0 )\Cu (\pi ) (y_j)\leq \Cu (\pi ) (m_0 x),\quad 
   \Cu (\pi )(m_0 x'')\leq \Cu (\pi ) (y_1'')+\ldots +\Cu (\pi ) (y_{n_0 }''),
  \]
  and $y_j''\ll y_j '\ll y_j$ for each $j$. Note that this is possible because $\Cu (\pi ) (x)=[\pi (c)]$ and $\Cu (\pi ) (x)$ has a representative in $(A/I)_+$.
  
  Take $w\in \Cu (I)$ such that $w=2w$, and
  \[
   (2m_0 )y_j'\ll (2m_0 )y_j\leq m_0 x+w,\andSep 
   m_0 x''\leq y_1''+\ldots + y_{n_0 }'' +w.
  \]
  
  Using \cite[Proposition~7.8]{ThiVil21arX:NowhereScattered}, we  find elements $z_j', z_j\in \Cu (A)$ satisfying 
  \[
   (2m_0 )z_j\ll m_0 x,\quad 
   y_j''\ll z_j'+w,\andSep 
   z_j'\ll z_j
  \]
  for every $j=1,\ldots ,n_0$.
  
  In particular, we have 
  \[
   x'\ll x''\leq m_0 x''\leq z_1'+\ldots + z_{n_0 }' +w.
  \]

  Applying \axiomO{6}, we obtain an element $r\in \Cu (A)$ such that 
  \[
   x'\ll z_1'+\ldots + z_{n_0 }' +r,\andSep 
   r\leq x'',w.
  \]

  Note that, since $r\leq x''$, it follows that $r=[b]$ for some $b\in A_+$. Further, since we also have $[b]\leq w$, one gets $b\in I_+$. Take $r'\ll r$ such that $x'\leq z_1'+\ldots + z_{n_0}' +r'$.
  
  Now, using that $\wdiv_{2m_1}(m_1[b])\leq n_1$, one gets elements $r_1,\ldots , r_{n_1}$ such that 
  \[
   (2m_1)r_i\ll m_1 r,\andSep 
   m_1 r'\ll r_1+\ldots +r_{n_1}
  \]
  for each $i\leq n_1$.

  Thus, it follows that 
  \[
   (m_0 m_1 )x'\leq (m_0 m_1 )z_1'+\ldots +(m_0 m_1)z_{n_0}'+m_0 r_1+\ldots +m_0 r_{n_1}
  \]
  and 
  \[
   (2m_0 m_1)z_j'\leq (m_0 m_1)x,\quad 
   (2m_0 m_1)r_i\leq (m_0 m_1)r\leq (m_0 m_1)x
  \]
  for each $i$ and $j$.
  
  This proves that 
  \[
   \wdiv_{2m_0 m_1}((m_0 m_1)x)\leq m_0(m_1 n_0+n_1),
  \]
  as desired.
  
  The converse follows from a standard argument; see \cite[Proposition~3.9]{ThiVil22arX:Glimm}.
 \end{proof}

 The previous result also works, mutatis-mutandi, with $\sdiv ()$ instead of $\wdiv ()$. In this case, an inspection of \cite[Theorem~3.10]{ThiVil22arX:Glimm} gives the following:
 
 \begin{thm}
  Let $I$ be an ideal of a \ca{} $A$. Assume that there exist $m_0,m_1,n_0,n_1\in\NN$ such that 
  \[
   \sdiv_{2m_0}(m_0 [a])\leq n_0,\andSep 
   \sdiv_{2m_1}(m_1[b])\leq n_1
  \]
  for every $a\in I_+$ and every $b\in (A/I)_+$. Then, there exist  $M,N\in\NN$ depending only on $m_0,m_1,n_0,n_1$ such that 
  \[
   \sdiv_{2M}(M[c])\leq N
  \]
  for every $c\in A_+$.
  
  Conversely, $\sdiv_{2m}(m[a]),\sdiv_{2m}(m[b])\leq n$ for every $a\in (A/I)_+$ and $b\in I_+$ whenever $\sup_{c\in A_+}\sdiv_{2m}(m[c])\leq n$ for some $m,n\in\NN$.
 \end{thm}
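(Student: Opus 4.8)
The plan is to follow the proof of \autoref{prp:BoundDivExt} line by line, replacing weak divisibility by divisibility throughout, and then to import the divisor-combination argument from \cite[Theorem~3.10]{ThiVil22arX:Glimm} to finish. As in \autoref{prp:BoundDivExt}, I would first use \cite{CiuRobSan10CuIdealsQuot} to identify $\Cu (I)$ with an ideal of $\Cu (A)$ and $\Cu (A/I)$ with $\Cu (A)/\Cu (I)$, and I would write $\Cu (\pi )\colon\Cu (A)\to\Cu (A/I)$ for the quotient \CuMor{}. Fixing $c\in A_+$, setting $x=[c]$, and choosing $x'\ll x''\ll x$, the whole point of the exact (as opposed to weak) statement is that each divisibility hypothesis now supplies a \emph{single} divisor rather than a finite list.

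Concretely, I would first process the quotient: since $\sdiv_{2m_0}(m_0\Cu (\pi )(x))\leq n_0$, divisibility of $\Cu (A/I)$ produces a single $\bar y$ with $(2m_0)\bar y\leq\Cu (\pi )(m_0 x)$ and $\Cu (\pi )(m_0 x'')\leq n_0\bar y$. Lifting $\bar y$ and absorbing the resulting idempotent error $w\in\Cu (I)$ via \cite[Proposition~7.8]{ThiVil21arX:NowhereScattered}, exactly as in \autoref{prp:BoundDivExt}, I would obtain $z'\ll z$ in $\Cu (A)$ with $(2m_0)z\ll m_0 x$ and $m_0 x''\leq n_0 z'+w$. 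Applying \axiomO{6} then isolates an ideal remainder $r\leq x'',w$ with $x'\ll n_0 z'+r$; as $r\leq w\in\Cu (I)$ one has $r=[b]$ for some $b\in I_+$. Processing the ideal, the hypothesis $\sdiv_{2m_1}(m_1 r)\leq n_1$ yields a single $s\in\Cu (I)$ with $(2m_1)s\ll m_1 r$ and $m_1 r'\ll n_1 s$, where $r'\ll r$ is picked so that $x'\leq n_0 z'+r'$.

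At this stage I would have
\[
 m_1 x'\leq (m_1 n_0)z'+n_1 s,\andSep (2m_0)z'\leq m_0 x,\quad (2m_1)s\leq m_1 x.
\]
The main obstacle is now clear: divisibility demands a \emph{single} divisor, but I am left with two, $z'$ and $s$. The naive choice $y=z'+s$ fails, since it only yields $(2m_0 m_1)y\leq 2(m_0 m_1)x$, losing the required factor of two in the multiple. This is precisely the difficulty handled in the proof of \cite[Theorem~3.10]{ThiVil22arX:Glimm}: there one applies \axiomO{5} to $(2m_0)z\ll m_0 x$ to split off a complement into which the purely ideal divisor $s$ can be absorbed, producing a single divisor that is, in effect, disjoint from the quotient part. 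Transporting that interpolation to the present setting --- and bookkeeping the finitely many instances of \axiomO{5} and \axiomO{6} --- would deliver a single $y\in\Cu (A)$ and constants $M,N$ depending only on $m_0,m_1,n_0,n_1$ with $(2M)y\leq Mx$ and $Mx'\leq Ny$. Since $x'\ll x$ was arbitrary and the recipe is uniform in $c$, this gives $\sdiv_{2M}(M[c])\leq N$ for all $c\in A_+$.

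The converse is the standard argument already used for \autoref{prp:BoundDivExt}; see \cite[Proposition~3.9]{ThiVil22arX:Glimm}. Assuming $\sup_{c\in A_+}\sdiv_{2m}(m[c])\leq n$: for $b\in I_+$, any divisor of $m[b]\in\Cu (I)$ automatically lies in the order-hereditary ideal $\Cu (I)$, whence $\sdiv_{2m}(m[b])\leq n$ computed inside $\Cu (I)$; for $a\in (A/I)_+$, one lifts $[a]$ to some $x=[c]$, applies the bound for $A$, and pushes the resulting divisor forward along the surjective \CuMor{} $\Cu (\pi )$, which preserves the relevant relations.
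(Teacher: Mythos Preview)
Your proposal is correct and follows precisely the approach the paper indicates: the theorem is stated immediately after \autoref{prp:BoundDivExt} with the remark that the argument works \emph{mutatis mutandis} replacing $\wdiv()$ by $\sdiv()$, together with an inspection of \cite[Theorem~3.10]{ThiVil22arX:Glimm} to merge the two divisors into one --- exactly the plan you outline, and your identification of the obstacle (two divisors $z'$ and $s$ that must be combined without losing the factor of two) is the correct one. One purely cosmetic point: in the theorem as stated the subscripts are swapped relative to \autoref{prp:BoundDivExt} --- here $m_0,n_0$ govern the ideal $I$ and $m_1,n_1$ the quotient $A/I$ --- so your quotient step should invoke $\sdiv_{2m_1}(m_1\Cu(\pi)(x))\leq n_1$ and your ideal step $\sdiv_{2m_0}(m_0 r)\leq n_0$; this does not affect the argument.
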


  As noted in \autoref{prp:SimpFinDiv}, having a sup-dense subset of elements with finite divisibility does not imply that every element has such property. However, \autoref{prp:FinDivSoftBound} below shows that it is enough to check finite divisibility for strongly soft elements (which, in general, are not sup-dense).
  
  \begin{pgr}[Strongly soft elements and retracts]
   Let $A$ be a \ca{}. As defined in \cite{ThiVil22pre:Soft}, we denote by $\Cu (A)_\soft$ the set of \emph{strongly soft elements}, that is, those elements $x\in\Cu (A)$ such that for every $x'\ll x$ there exists $t\in\Cu (A)$ satisfying $x'+t\leq x\leq \infty t$. When $A$ is residually stably finite, $[a]\in\Cu (A)$ is strongly soft if and only if $\overline{aAa}$ has no nonzero, unital quotients; see \cite[Proposition~4.16]{ThiVil22pre:Soft}.
   
   Under certain assumptions, given any $x\in\Cu (A)$ one can find the largest strongly soft element below $x$. When this is the case, we denote by $\sigma\colon \Cu (A)\to \Cu (A)_\soft$ the map that sends an element $x$ to the largest strongly soft element dominated by $x$.
   
   As shown in \cite[Proposition~2.9]{Thi20RksOps}, this map can be defined whenever $A$ is separable, simple, and of stable rank one. In fact, one can show that $\sigma$ is an order- and suprema-preserving monoid morphism.
   
   In \cite[Theorem~5.6]{AsaVasThiVil23arX:DimRadCompSofCuSgp}, it is shown that $\sigma$ can always be defined whenever $A$ is separable, has the Global Glimm Property, and its Cuntz semigroup is \emph{left-soft separative}, that is, if for any triplet of elements $y,t\in S$ and $x\in S_\soft$ satisfying 
    \[
      x+t\ll y+t,\quad 
      t\ll \infty y,\andSep 
      t\ll \infty x,
    \]
    we have $x\ll y$; see \cite[Definition~3.2]{AsaVasThiVil23arX:DimRadCompSofCuSgp}. As explained in \cite[Section~3]{AsaVasThiVil23arX:DimRadCompSofCuSgp}, \ca{s} with strict comparison or stable rank one have a left-soft separative Cuntz semigroup.
    
    In this case, $\sigma$ is an order- and suprema-preserving superadditive map that satisfies $x\leq \sigma (x)+t$ whenever $x\leq \infty t$.
  \end{pgr}
  
  
  \begin{prp}\label{prp:FinDivSoftBound}
   Let $A$ be a separable \ca{} with the Global Glimm Property such that $\Cu (A)$ is left-soft separative, and let $x\in \Cu (A)$ and $m\in\NN$. Then
   \[
    \wdiv_m(\sigma (x))-1\leq   \wdiv_m(x)\leq \wdiv_m(\sigma (x))+1.
   \]
  \end{prp}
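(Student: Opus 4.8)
The plan is to establish the two inequalities separately, using the map $\sigma$ to compare the weak divisibility of $x$ and of its largest strongly soft part $\sigma(x)$. Throughout I would exploit that $\sigma$ is order- and suprema-preserving and superadditive, and that it satisfies $x\leq \sigma(x)+t$ whenever $x\leq \infty t$, as recalled in the preceding paragraph for \ca{s} with the Global Glimm Property and a left-soft separative Cuntz semigroup.

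First I would prove $\wdiv_m(\sigma(x))\leq \wdiv_m(x)+1$. The key observation is that $\sigma(x)\leq x$, but one cannot simply invoke monotonicity of $\wdiv_m$ since divisibility is not order-monotone in general. Instead, I would start from $x'\ll \sigma(x)\leq x$ and apply the weak $(m,n)$-divisibility of $x$ (with $n=\wdiv_m(x)$) to obtain $y_1,\dots,y_n$ with $x'\leq y_1+\dots+y_n$ and $my_j\leq x$. The difficulty is that the $y_j$ witness divisibility of $x$, not of $\sigma(x)$, so their multiples need not sit below $\sigma(x)$. The natural fix is to pass to the soft parts $\sigma(y_j)$: since $\sigma$ is order-preserving and superadditive, $m\sigma(y_j)\leq \sigma(my_j)\leq \sigma(x)$, and $x'$ being strongly soft should allow me to absorb it below $\sum_j \sigma(y_j)$ together with one extra summand coming from the ``error'' between $x'$ and $\sum\sigma(y_j)$; this extra term accounts for the ``$+1$''. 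Here I would use the Global Glimm Property to produce an auxiliary $(m,\omega)$-divisible element to cover that single residual summand, keeping $m$ times it below $\sigma(x)$.

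Next I would prove $\wdiv_m(x)\leq \wdiv_m(\sigma(x))+1$, which is the reverse direction and I expect it to be the main obstacle. Setting $n=\wdiv_m(\sigma(x))$, I start from $x'\ll x$ and want to cover $x'$ by $n+1$ summands whose $m$-fold multiples lie below $x$. The issue is that $\sigma(x)$ may be strictly smaller than $x$, so a decomposition of (a cutdown of) $\sigma(x)$ does not immediately decompose $x'$. The strategy is to use the defining relation $x\leq \sigma(x)+t$: choosing $t$ with $x\leq \infty t$ and $x'\ll x$, I would first peel off a piece of $x'$ that is dominated by $\sigma(x)$ and apply the $n$-fold divisibility of $\sigma(x)$ there, yielding $n$ summands $y_1,\dots,y_n$ with $my_j\leq \sigma(x)\leq x$. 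The remaining piece of $x'$, which is governed by $t$, would be handled by the Global Glimm Property: since $x$ is $(m,\omega)$-divisible, I can find a single element $w$ with $mw\leq x$ absorbing this remainder, and left-soft separativity is what lets me control the comparison so that exactly one additional summand suffices. Combining gives $x'\leq y_1+\dots+y_n+w$ with all $m$-multiples below $x$.

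The crux in both directions is the careful bookkeeping of the $\ll$-relation together with axioms \axiomO{5} and \axiomO{6} to split and recombine elements without losing the compact-containment needed to invoke divisibility, and the repeated use of the structural properties of $\sigma$ (order-preservation, superadditivity, and the inequality $x\leq\sigma(x)+t$ for $x\leq\infty t$). I expect the hardest technical point to be ensuring that the single ``error'' summand in each direction can genuinely be bounded using only the Global Glimm Property and left-soft separativity, rather than requiring a bound depending on $\wdiv_m$ itself; this is precisely where the hypotheses on $A$ are essential and where a naive argument would produce an uncontrolled constant instead of the clean $\pm 1$.
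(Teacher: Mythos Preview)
Your outline matches the paper's proof closely. Two small sharpenings are worth flagging so that the ``$+1$'' really comes out cleanly.

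For $\wdiv_m(\sigma(x))\leq\wdiv_m(x)+1$: the paper does not rely on $x'$ being soft to control the error. Instead, \emph{before} applying divisibility it fixes a strongly soft $t$ with $(nm)t\leq x\leq\infty t$ (via \cite[Proposition~7.7]{ThiVil22pre:Soft}, not merely Global Glimm). Since each $y_j\leq x\leq\infty t$, the relation $y_j\leq\sigma(y_j)+t$ gives $\sum_j y_j\leq\sum_j\sigma(y_j)+nt$, and the single extra summand is $nt$; because $t$ is soft and $(nm)t\leq x$, one gets $m(nt)\leq\sigma(x)$ directly. Your phrasing suggests only one auxiliary copy arises, but the error accumulates $n$ times, which is exactly why the upfront choice $(nm)t\leq x$ is needed to keep the extra summand a \emph{single} element.

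For $\wdiv_m(x)\leq\wdiv_m(\sigma(x))+1$: this direction is simpler than you indicate. After choosing $t$ with $mt\leq x\leq\infty t$ (Global Glimm) and using $x\leq\sigma(x)+t$, one application of \axiomO{6} to $x'\ll\sigma(x)+t$ gives $x'\leq y+t$ with $y\ll\sigma(x)$; divisibility of $\sigma(x)$ covers $y$ by $n$ summands, and $t$ itself is the $(n{+}1)$-st. Left-soft separativity is not invoked in this bookkeeping; it enters only in the background to guarantee that $\sigma$ exists with the property $x\leq\sigma(x)+t$.
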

  \begin{proof}
   First, to see that $\wdiv_m(\sigma (x))\leq   \wdiv_m(x)+1$, assume that $\wdiv_m(x)=n$ for some $n\in\NN$, since otherwise there is nothing to prove.
   
   Let $s\ll \sigma (x)$. Using that $\sigma$ preserves suprema of increasing sequences, we can find $x'\ll x$ such that $s\ll \sigma (x')$.
   
   By \cite[Proposition 7.7]{ThiVil22pre:Soft}, there exists $t\in\Cu (A)_\soft$ such that $(nm)t\leq x\leq \infty t$. Note that $(nm)t\leq\sigma (x)$ because $t$ is strongly soft. Using that $x$ is weakly $(m,n)$-divisible, one finds $z_1,\ldots ,z_n\in\Cu (A)$ such that 
   \[
    \sigma (x')\leq z_1+\ldots +z_n,\andSep 
    mz_j\leq x
   \]
   for every $j$.
   
   Now, using that $z_i\leq\infty t$ for every $i$, we obtain 
   \[
    \sigma (x')\leq z_1+\ldots +z_n
    \leq (\sigma (z_1)+t)+z_2+\ldots +z_n
    \leq \sigma (z_1)+\ldots +\sigma (z_n)+nt.
   \]

   Further, using that $\sigma$ is superadditive, we also get $m\sigma (z_j)\leq \sigma (mz_j)\leq \sigma (x)$. Since $n(mt)\leq \sigma (x)$, we deduce that $\wdiv_n(\sigma (x))\leq   n+1$, as desired.
   
   To prove that $\wdiv_m(x)\leq \wdiv_m(\sigma (x))+1$, take $t\in\Cu (A)$ such that $mt\leq x\leq \infty t$. Note that this can be done because $A$ has the Global Glimm Property and, consequently, $\Cu (A)$ is $(m,\omega )$-divisible; see \autoref{pgr:2OmegaDiv}. Thus, we have $x\leq \sigma (x)+t$. Let $x'\ll x$. Since $x'\ll \sigma (x)+t$, we can use \axiomO{6} to obtain an element $y$ such that $x'\ll y+t$ with $y\ll \sigma (x)$. 
   
   Let $n$ be such that $\sigma (x)$ is weakly $(m,n)$-divisible. Then, there exist $z_1,\ldots ,z_n$ satisfying 
   \[
    y\leq z_1+\ldots +z_n,\andSep 
    mz_j\leq \sigma (x)
   \]
   for each $j$.
   
   We obtain 
   \[
    x'\leq z_1+\ldots +z_n +t,\quad 
    mt\leq x,\andSep 
    mz_j\leq \sigma (x)\leq x
   \]
   for every $j$, which implies that $\wdiv_m(x)\leq n+1$.
  \end{proof}
  
\begin{lma}\label{prp:InfSoft}
 Let $A$ be a \ca{}, and let $s,t\in \Cu (A)$. Assume that $s$ is strongly soft. Then, the element $s\wedge \infty t$ is strongly soft in $\Cu (A)$.
\end{lma}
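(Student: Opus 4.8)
The plan is to verify the definition of strong softness directly for $w := s\wedge\infty t$. Fix $w'\ll w$. Since $w\leq s$, transitivity of $\ll$ with $\leq$ gives $w'\ll s$, so I can feed $w'$ into the strong softness of $s$ to obtain $r_0\in\Cu(A)$ with $w'+r_0\leq s\leq\infty r_0$. The natural candidate for the required ``core'' of $w$ is then $r:=r_0\wedge\infty t$, whose existence is guaranteed by \cite[Remark~2.6]{AntPerRobThi21Edwards}.

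With this choice, the inequality $w'+r\leq w$ should be immediate: on one hand $w'+r\leq w'+r_0\leq s$, and on the other hand $w'\leq w\leq\infty t$ together with $r\leq\infty t$ gives $w'+r\leq\infty t+\infty t=\infty t$, using the absorbing property $\infty t+\infty t=\infty t$. Combining these yields $w'+r\leq s\wedge\infty t=w$. The remaining and only substantial point is the reverse-type inequality $w\leq\infty r$.

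For $w\leq\infty r$ the key is the identity $\infty r_0\wedge\infty t\leq\infty(r_0\wedge\infty t)=\infty r$; granting it, one concludes from $w\leq s\leq\infty r_0$ and $w\leq\infty t$ that $w\leq\infty r_0\wedge\infty t\leq\infty r$, finishing the proof. To establish this inequality I would argue at the level of elements way-below $w$: take any $z\ll w$, interpolate $z\ll\tilde z\ll w$, and use $\tilde z\ll w\leq\infty r_0=\sup_n nr_0$ to obtain $\tilde z\leq nr_0$ for some $n\in\NN$. Then the standard almost-Riesz decomposition coming from \axiomO{6} (applied to $z\ll\tilde z\leq nr_0$) yields $v_1,\ldots,v_n\in\Cu(A)$ with $z\leq v_1+\ldots+v_n$ and each $v_i\leq\tilde z,r_0$. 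Since $v_i\leq\tilde z\leq w\leq\infty t$ and $v_i\leq r_0$, each $v_i\leq r_0\wedge\infty t=r$, whence $z\leq nr\leq\infty r$. Taking the supremum over all $z\ll w$ gives $w\leq\infty r$, as needed.

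The main obstacle is precisely this last step: one must pass from the two ambient domination relations $w\leq\infty r_0$ and $w\leq\infty t$ to domination by $\infty$ times the \emph{single} element $r_0\wedge\infty t$, and this genuinely requires the Riesz-type decomposition afforded by \axiomO{6} rather than any formal manipulation of $\wedge$ and $\infty$. Everything else in the argument is routine bookkeeping with the relation $\ll$ and the idempotency of $\infty t$.
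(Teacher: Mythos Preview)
Your proof is correct and uses the same witness as the paper, namely $r=r_0\wedge\infty t$ (the paper's $y\wedge\infty t$). The only difference lies in how you verify $w\leq\infty r$: you do this by an iterated \axiomO{6} decomposition, whereas the paper simply invokes the fact from \cite[Section~2]{AntPerRobThi21Edwards} that $z\mapsto z\wedge\infty t$ is an additive, suprema-preserving map, which immediately yields $\infty r_0\wedge\infty t=\infty(r_0\wedge\infty t)$ (and also streamlines the first inequality to $(w'+r_0)\wedge\infty t=w'+r$). So your closing remark that this step \emph{genuinely requires} \axiomO{6} rather than formal manipulation of $\wedge$ and $\infty$ is not accurate---the cited structural fact about infima with idempotents does the job directly. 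Your argument has the virtue of being self-contained, at the cost of the extra Riesz-decomposition bookkeeping.
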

\begin{proof}
 Let $x',x\in\Cu (A)$ be such that $x'\ll x\ll s\wedge \infty t$. By definition, this implies that $x'\ll s$ and $x'\ll \infty t$.
 
 Since $s$ is strongly soft, we find $y\in\Cu (A)$ such that $x'+y\leq s\leq \infty y$. Thus, using that taking infima with an idempotent is a monoid morphism (\cite[Section~2]{AntPerRobThi21Edwards}), we have 
 \[
  x'+y\wedge \infty t = (x'+y)\wedge \infty t\leq s\wedge\infty t\leq \infty y\wedge \infty t = \infty(y\wedge \infty t).
 \]

 This shows that $s\wedge \infty t$ is strongly soft.
\end{proof}


When $\Cu (A)$ is not left-soft separative, we still have the following:
\begin{thm}\label{prp:BoundDivSoft}
 Let $A$ be a \ca{} with the Global Glimm Property. Then, the following are equivalent:
 \begin{itemize}
  \item[(i)] there exist $n,m\in\NN$ such that $\wdiv_{2m} (m[a])\leq n$ for every $[a]\in \Cu (A)$;
  \item[(ii)] there exist $n,m\in\NN$ such that $\wdiv_{2m} (m[a])\leq n$ for every $[a]\in \Cu (A)_\soft $.
 \end{itemize}
\end{thm}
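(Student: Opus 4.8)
The forward implication (i)$\Rightarrow$(ii) is immediate, since $\Cu(A)_\soft\subseteq\Cu(A)$.

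For (ii)$\Rightarrow$(i), fix $m,n$ as in (ii), so that $\wdiv_{2m}(ms)\leq n$ for every $s\in\Cu(A)_\soft$. The plan is to bound $\wdiv_{2m}(mx)$ uniformly over all $x=[a]\in\Cu(A)$, keeping the same $m$. The engine of the argument is the following elementary reduction: suppose that for a given $x$ one can find a \emph{soft} element $s$ with
\[
 ms\leq mx\leq 2ms.
\]
Then, given $w\ll mx$, the inequality $mx\leq ms+ms$ together with \axiomO{6} lets us split $w\leq w_1+w_2$ with $w_1,w_2\ll ms$; applying $\wdiv_{2m}(ms)\leq n$ to each $w_j$ produces elements $v_{j,1},\dots,v_{j,n}$ with $w_j\leq\sum_i v_{j,i}$ and $2m\,v_{j,i}\leq ms\leq mx$. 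Collecting the $2n$ elements $v_{j,i}$ shows $\wdiv_{2m}(mx)\leq 2n$. Thus everything reduces to producing, for each $x$, a soft element $s$ comparable to $x$ after passing to the $m$-th multiple, with a comparison constant (here $2$) that does not depend on $x$.

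To build such an $s$ I would use the Global Glimm Property together with \autoref{prp:InfSoft}. By \autoref{pgr:2OmegaDiv}, the Global Glimm Property makes $\Cu(A)$ $(k,\omega)$-divisible for every $k$, and in particular provides a soft element $t$ with $2mt\leq x\leq\infty t$; moreover finite multiples and idem-multiples of soft elements are again soft (this is where \autoref{prp:InfSoft}, and the softness of $\infty t$, enter), so one has a good supply of soft elements inside the ideal generated by $x$. Starting from such a $t$ one then tries to enlarge it to a soft element $s$ that captures all of $x$ up to the slack permitted by the $m$-multiple, that is, with $ms\leq mx\leq 2ms$. Once $s$ is found, the reduction above, combined with \autoref{prp:Weak2wSum}~(v) to pass from $\ll$-predecessors of $x$ to $x$ itself, yields the desired uniform bound.

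The main obstacle is exactly this construction of a uniformly comparable soft element. Without left-soft separativity there need not be a largest soft element below $x$ (no map $\sigma$), so one cannot simply split off a soft part as in \autoref{prp:FinDivSoftBound}; and for a compact class $[p]$ the naive comparison $[p]\leq Nt$ with $t$ soft has $N=N(p)$ \emph{unbounded}, reflecting possible perforation in $\Cu(A)$. The point of phrasing the divisibility as $\wdiv_{2m}(m[a])$ rather than $\wdiv_2([a])$ is precisely that passing to the $m$-th multiple damps out this perforation, so that a comparison $ms\leq mx\leq 2ms$ can hold with a uniform constant even when no uniform comparison survives before multiplying. Making this precise---extracting, with the help of \autoref{prp:InfSoft} and the uniform soft bound (ii), a soft $s$ with $ms\leq mx\leq 2ms$ uniformly in $x$---is where the real work lies, and it is what separates the general statement from the left-soft separative case already treated in \autoref{prp:FinDivSoftBound}.
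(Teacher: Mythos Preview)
Your reduction is fine: if for each $x'\ll x$ you can exhibit soft elements $s_1,s_2\leq x$ with $x'\leq s_1+s_2$, then $mx'\leq ms_1+ms_2$, $ms_j\leq mx$, and a direct application of the soft bound (together with \axiomO{6}) gives $\wdiv_{2m}(mx)\leq 2n$. This is exactly the endgame the paper uses. The gap is that you do not carry out the construction, and the construction you propose is stronger than what is needed and likely out of reach in this generality.

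You aim for a \emph{single} soft $s$ with $ms\leq mx\leq 2ms$, with the constant~$2$ independent of $x$. Without left-soft separativity (no $\sigma$ map), there is no obvious mechanism to produce such an $s$, and your remark that ``passing to the $m$-th multiple damps out perforation'' is a hope rather than an argument; nothing in (ii) or in the Global Glimm Property gives a uniform comparison between $mx$ and a fixed soft multiple. The paper avoids this entirely by allowing \emph{two} soft elements that depend on the chosen $x'\ll x$. Concretely: from the Global Glimm Property one gets a soft $y$ with $y\leq x\leq\infty y$; then take soft $y'\ll y''\ll y$ with $x'\ll\infty y'$, use softness of $y''$ to produce a soft $t$ with $y'+t\leq y''$, and apply \axiomO{5} at $y''\ll y\leq x$ to get $c$ with $y''+c\leq x\leq y+c$. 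Intersecting with $\infty y''$ yields
\[
x'\leq (y\wedge\infty y'')+\big((c+t)\wedge\infty y''\big),
\]
and both summands are soft (\autoref{prp:InfSoft} for the first; for the second one uses that $c\wedge\infty y''\leq\infty(t\wedge\infty y'')$ and the permanence of softness under such sums). These are your $s_1,s_2$, and they satisfy $s_1,s_2\leq x$ with no comparison constant needed. The hypothesis (ii) is used only at the very last step, not in the construction; so your suggestion that the uniform soft bound might help build $s$ is a red herring.

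In short: your outline is correct once one knows that two $x'$-dependent soft summands suffice, but the actual content of the proof is precisely the construction you defer, and the version you sketch (one soft $s$ with uniform multiplicative comparison) is the wrong target.
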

\begin{proof}
 That (i) implies (ii) is trivial.

 To prove the converse, let $x',x\in\Cu (A)$ be such that $x'\ll x$. It follows from \cite[Proposition~7.7]{ThiVil22pre:Soft} that there exists $y\in\Cu (A)_\soft$ satisfying  $y\leq x\leq \infty y$. In particular, one has $x'\ll \infty y$.
 
 By \cite[Proposition~5.6]{ThiVil22pre:Soft}, we can take $y',y''\in \Cu (A)_\soft$ such that $y'\ll y''\ll y$ and $x'\ll \infty y'$. Since $y''$ is soft, we find $t\in \Cu (A)_\soft$ such that $y'+t\leq y''\leq \infty t$; see \cite[Proposition~4.13]{ThiVil22pre:Soft}. Now, by \axiomO{5}, there exists $c\in\Cu (A)$ satisfying 
 \[
  y''+c\leq x\leq y+c.
 \]

 Thus, we have
 \[
  y'+(c+t)\leq x\leq y+c\leq y+ (c+t).
 \]
 
 Using that $x'\leq \infty y'$, we obtain 
 \[
  x'\leq x\wedge \infty y''\leq (y+ (c+t))\wedge \infty y''= y\wedge \infty y'' +(c+t)\wedge \infty y''
 \]
 and 
 \[
  y\wedge \infty y'', (c+t)\wedge \infty y''\leq x.
 \]

 By \autoref{prp:InfSoft}, the elements $y\wedge \infty y''$ and $t\wedge \infty y''$ are strongly soft. Further, note that one gets 
 \[
  (c+t)\wedge \infty y'' = c\wedge \infty y'' + t\wedge \infty y''
 \]
 with $c\wedge \infty y''\leq \infty y''\leq \infty (t\wedge \infty y'')$.
 
 Using \cite[Theorem~4.16]{ThiVil22pre:Soft}, we deduce that $(c+t)\wedge \infty y''$ is also strongly soft. In other words, for every pair of elements $x',x\in \Cu (A)$ such that $x'\ll x$, we find $s_1,s_2\in\Cu (A)_\soft$ such that 
 \[
  s_1 ,s_2\leq x,\andSep x'\leq s_1+s_2.
 \]
 
 In particular, $ms_1,ms_2\leq mx$ and $mx'\leq ms_1+ms_2$.

 By \autoref{prp:Weak2wSum}~(ii), we have 
 \[
  \wdiv_{2m}(mx)\leq \wdiv_{2m}(ms_1 ) + \wdiv_{2m}(ms_2 )
  \leq 2n,
 \]
 as desired.
\end{proof}

\begin{cor}\label{prp:RetractBound}
 Let $A$ be a \ca{} with the Global Glimm Property. Assume that $\Cu (A)_\soft$ is a retract of a \CuSgp{} $S$ satisfying $\sup_{s\in S}\wdiv_{2k} (ks)<\infty$ for some $k\in\NN$. Then, there exist $n,m\in\NN$ such that $\wdiv_{2m} (m[a])\leq n$ for every $[a]\in\Cu (A)$.
\end{cor}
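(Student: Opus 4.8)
The plan is to transfer the bounded divisibility of $S$ to the strongly soft elements of $\Cu (A)$ along the retraction, and then to let \autoref{prp:BoundDivSoft} upgrade this to a bound on all of $\Cu (A)$. Set $n_0:=\sup_{s\in S}\wdiv_{2k}(ks)$, which is finite by hypothesis, and let $\iota\colon \Cu (A)_\soft\to S$ and $\rho\colon S\to \Cu (A)_\soft$ be the \CuMor{s} witnessing the retract, so that $\rho\circ\iota=\id$. Since \autoref{prp:BoundDivSoft} reduces condition~(i) to condition~(ii), it suffices to prove a uniform bound on $\wdiv_{2k}(k[a])$ as $[a]$ ranges over $\Cu (A)_\soft$; the theorem then produces the required $n,m\in\NN$ (one may take $m=k$).

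So I fix $x\in \Cu (A)_\soft$ and put $w:=kx$, which again lies in $\Cu (A)_\soft$. First I would record the two structural facts that make the transfer go through: being a retract of the \CuSgp{} $S$, the set $\Cu (A)_\soft$ is itself a \CuSgp{}, and by \cite{ThiVil22pre:Soft} it is closed under suprema inside $\Cu (A)$. Writing $\ll_\soft$ for its way-below relation, axiom \axiomO{2} (every element is the supremum of a $\ll_\soft$-increasing sequence) supplies strongly soft elements $z_1\ll_\soft z_2\ll_\soft\cdots$ with $\sup_i z_i=w$, the supremum being the same whether computed in $\Cu (A)_\soft$ or in $\Cu (A)$. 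Now take any $x'\ll w$ in $\Cu (A)$; since $w=\sup_i z_i$ in $\Cu (A)$, there is $N$ with $x'\leq z_N$, and $z_N\ll_\soft z_{N+1}\leq w$ gives $z_N\ll_\soft w$.

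This is the point at which the retract enters, and the rest is a diagram chase. As $\iota$ is a \CuMor{} it preserves the way-below relation, so $\iota(z_N)\ll_S \iota(w)=k\iota(x)$ in $S$. Applying $\wdiv_{2k}(k\iota(x))\leq n_0$ yields $u_1,\ldots,u_{n_0}\in S$ with $\iota(z_N)\leq u_1+\ldots+u_{n_0}$ and $(2k)u_j\leq k\iota(x)$ for each $j$. Applying the order- and addition-preserving map $\rho$, and using $\rho\circ\iota=\id$, I obtain $z_N\leq \rho(u_1)+\ldots+\rho(u_{n_0})$ and $(2k)\rho(u_j)\leq kx$ for each $j$. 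Since $x'\leq z_N$, the elements $\rho(u_j)\in \Cu (A)$ witness $\wdiv_{2k}(kx)\leq n_0$. As $x$ was an arbitrary strongly soft element, \autoref{prp:BoundDivSoft} applies with the bound $n_0$ for $m=k$ and furnishes the desired $n,m\in\NN$.

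The one genuinely delicate point is the interplay of the two way-below relations: the divisibility in \autoref{prp:BoundDivSoft}~(ii) must be computed in $\Cu (A)$, whereas the retract only controls the intrinsic Cu-structure of $\Cu (A)_\soft$. The device bridging the gap is to approximate a general $x'\ll w$ by a strongly soft $z_N$ drawn from a $\ll_\soft$-rapidly increasing sequence with supremum $w$; this is legitimate precisely because suprema of strongly soft elements agree in $\Cu (A)_\soft$ and in $\Cu (A)$, so I never need the soft and ambient way-below relations to coincide. Everything else is the routine verification that $\iota$ and $\rho$ transport the divisibility witnesses back and forth.
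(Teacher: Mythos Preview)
Your proof is correct and follows exactly the paper's approach: transfer the uniform bound from $S$ to $\Cu(A)_\soft$ along the retraction, then invoke \autoref{prp:BoundDivSoft}. You supply considerably more detail than the paper---in particular, your careful bridging between $\ll$ in $\Cu(A)$ and $\ll_\soft$ in $\Cu(A)_\soft$ via an \axiomO{2}-sequence makes explicit a point the paper's two-line argument leaves implicit---but the strategy is identical.
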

\begin{proof}
 Using that $\Cu (A)_\soft$ is a retract of $S$, it follows that 
 \[
  \wdiv_{2k} (ky)\leq \sup_{s\in S}\wdiv_{2k} (ks)<\infty
 \]
 for every $y \in \Cu (A)_\soft$.
 
 Applying \autoref{prp:BoundDivSoft}, there exist $n,m\in\NN$ such that $\wdiv_{2m}(m x)\leq n$ for every $x\in\Cu (A)$.
\end{proof}

Recall from \cite[Definition~3.1]{ThiVil22DimCu} that one says that $\Cu (A)$ has \emph{covering dimension zero} if, whenever $x'\ll x\ll y_1+y_2$, there exist $z_1,z_2\in\Cu (A)$ such that $x'\leq z_1+z_2\leq x$. Examples of dimension zero include the Cuntz semigroup of any real rank zero \ca{}, as well as other semigroups, such as the Cuntz semigroup of the Jacelon-Razak algebra; see \cite[Section~5]{ThiVil22DimCu}.

We say that a Cuntz semigroup is \emph{weakly cancellative} if $x\ll y$ whenever $x+z\ll y+z$ for some element $z$. Stable rank one \ca{s} have a weakly cancellative Cuntz semigroup by \cite[Theorem~4.3]{RorWin10ZRevisited}.

\begin{cor}\label{prp:Dim0BoundDiv}
 Let $A$ be a simple, non-elementary \ca{} of Cuntz covering dimension zero. Assume that $\Cu (A)$ is weakly cancellative. Then there exists $n\in\NN$ such that $\wdiv_{2} ([a])\leq n$ for every $[a]\in\Cu (A)$.
\end{cor}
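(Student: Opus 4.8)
The plan is to reduce the statement to the soft part of $\Cu (A)$ and there exploit covering dimension zero. Since $A$ is simple and non-elementary, it has the Global Glimm Property (\autoref{pgr:GGP}); in particular $\Cu (A)$ is $(2,\omega )$-divisible, and by \autoref{prp:SimpFinDiv} the elements of finite weak divisibility are sup-dense. Thus I may use the soft machinery underlying \autoref{prp:BoundDivSoft}: for every $x'\ll x$ one can produce strongly soft elements $s_1,s_2\leq x$ with $x'\leq s_1+s_2$, using only the Global Glimm Property. Consequently, if I can bound $\wdiv_2$ \emph{uniformly on the soft part}, then running the argument in the proof of \autoref{prp:BoundDivSoft} with $m=1$ yields $\wdiv_2 ([c])\leq 2n_0$ for every $c\in A_+$, where $n_0:=\sup \{\wdiv_2 (s):s\in \Cu (A)_\soft \}$. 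So the whole problem is to show $n_0<\infty$.

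For this main step I would show that $\Cu (A)_\soft$ is \emph{boundedly $2$-divisible}, that is, $\wdiv_2 (s)\leq 2$ for every strongly soft $s$. The underlying idea is that, on the soft part, covering dimension zero together with weak cancellation removes the perforation that otherwise makes weak divisibility unbounded, so that a genuine halving becomes available. Concretely, given soft $s$ and $s'\ll s$, I first use softness to find $t$ with $s'+t\leq s\leq \infty t$, and then use $(2,\omega )$-divisibility of $s$ to obtain $u_1,\ldots ,u_N$ with $s'\leq u_1+\ldots +u_N$ and $2u_i\leq s$. The goal is to collapse these $N$ pieces into a single half $w$ with $2w\leq s$ and $s'\leq 2w$; from such a $w$ one reads off $\wdiv_2 (s)\leq 2$ by taking both divisors equal to $w$. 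Covering dimension zero supplies the refinements needed to assemble $w$ from the $u_i$ inside the interval cut out by $s'$ and $s$, while weak cancellation guarantees these refinements are compatible, so that no multiplicity is lost. Equivalently, I expect to realize $\Cu (A)_\soft$ as a retract of a Cu-semigroup $S$ on which halving is literal, whence $\sup_{s\in S}\wdiv_2 (s)\leq 2$; since a retraction is an order-, sum- and $\ll$-preserving map admitting a section, it transports weak $(2,2)$-divisibility back to $\Cu (A)_\soft$, giving $n_0\leq 2$ (this is the $\wdiv_2$-analogue of \autoref{prp:RetractBound}).

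Finally I would combine the two steps. From $n_0\leq 2$ and the soft decomposition $x'\leq s_1+s_2$ with $s_1,s_2\leq x$ strongly soft, each $s_i$ is weakly $(2,2)$-divisible with divisors doubling into $s_i\leq x$; splicing these divisors and invoking \autoref{prp:Weak2wSum} produces $\wdiv_2 ([a])\leq 4$ for every $a\in A_+$, which is the desired uniform bound $n=4$.

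The hard part is the middle step: upgrading the unbounded-multiplicity halving coming from mere $(2,\omega )$-divisibility to a \emph{bounded}-multiplicity halving on the soft part. This is precisely where covering dimension zero and weak cancellation must both be used, since without regularity the soft part of a simple algebra can be perforated and $\wdiv_2$ genuinely unbounded (compare \autoref{exa:DirSumInfDiv}). I anticipate the cleanest implementation to go through the retract description of $\Cu (A)_\soft$, reducing the construction of the halving element to a divisibility property of a concrete, well-behaved model Cu-semigroup.
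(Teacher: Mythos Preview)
Your overall strategy --- reduce to the soft part via \autoref{prp:BoundDivSoft} and then invoke a retract description of $\Cu(A)_\soft$ --- is exactly the paper's. The paper cites \cite[Theorem~7.10]{ThiVil22DimCu}: for $A$ separable, simple, non-elementary, weakly cancellative, and of Cuntz covering dimension zero, $\Cu(A)_\soft$ is a retract of an \emph{almost divisible} $\CatCu$-semigroup $S$. Almost divisibility gives $\wdiv_2(s)\leq 3$ for every $s\in S$, and \autoref{prp:RetractBound} transports this bound to $\Cu(A)_\soft$ and then to $\Cu(A)$; the non-separable case is handled by a L\"owenheim--Skolem reduction as in the proof of \autoref{prp:SurjMap}.

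The gap in your proposal is that the middle step is never actually carried out. Your direct argument --- collapsing $u_1,\ldots,u_N$ into a single half $w$ --- is only asserted: the sentence ``covering dimension zero supplies the refinements needed \ldots\ while weak cancellation guarantees these refinements are compatible'' is not a construction, and there is no evident mechanism by which dimension zero alone merges an unbounded number of pieces into one. Your alternative, the retract description, is the right idea, but you neither construct the retract nor cite the result that delivers it; that result is \cite[Theorem~7.10]{ThiVil22DimCu}, and its proof is precisely where covering dimension zero and weak cancellation are used. Note also that the model semigroup $S$ obtained there is almost divisible rather than literally $2$-divisible, so the bound on the soft part is $3$, not $2$ (hence $6$, not $4$, on $\Cu(A)$). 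Finally, you do not address separability, which is required for the retract result and must be removed by a separate reduction.
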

\begin{proof}
 Assume first that $A$ is separable. \cite[Theorem 7.10]{ThiVil22DimCu} shows that $\Cu (A)_\soft$ is a retract of an almost divisible \CuSgp{} $S$; see also the proof of \cite[Proposition~7.13]{ThiVil22DimCu}. 
 
 As explained in \autoref{exa:exaDivRR0}, almost divisibility implies $\wdiv_2(s)\leq 3$ for every element $s\in S$. Thus, the result in this case follows from \autoref{prp:RetractBound} above.
 
 If $A$ is not separable, one can use the same techniques as in the proof of \autoref{prp:SurjMap} to get the desired result.
\end{proof}
 
\section{Nowhere scattered corona and multiplier algebras}\label{sec:NScaCorAlg}

In this section we study when a mutilplier algebra of a $\sigma$-unital, nowhere scattered \ca{} $A$ is nowhere scattered. Our main result is that this happens whenever $\sup_{a\in A_+}\wdiv_{2m}(m[a])<\infty$ for some $m\in\NN$; see \autoref{thm:MainMA}. Note that the supremum is not taken over all the elements of the Cuntz semigroup (ie. over $A\otimes \mathcal{K}$), but only over those with a representative in $A_+$. This set is called the \emph{scale} of the Cuntz semigroup in \cite[4.2]{AntPerThi20CuntzUltraproducts}.

As an application, we prove in \autoref{prp:FinNucDimMANSca} that nowhere scattered \ca{s} of finite nuclear dimension, or real rank zero, or stable rank one and $k$-comparison all have a nowhere scattered multiplier algebra. We also show in \autoref{prp:MAStable} that, for stable \ca{s}, the multiplier algebra being nowhere scattered implies that every element in the algebra has finite divisibility.

Let us begin the section with some examples:

\begin{exa}
 Let $A$ be a weakly purely infinite \ca{}. It was shown in \cite[Proposition~4.11]{KirRor02InfNonSimpleCalgAbsOInfty} that $\mathcal{M}(A)$ is weakly purely infinite, and thus nowhere scattered by \cite[Example~3.3]{ThiVil21arX:NowhereScattered}.
 
 As a similar example, it follows from \cite{LinSimpleCorAlg} that a simple, $\sigma$-unital, non-elementary (ie. nowhere scattered) \ca{} with continuous scale has a purely infinite corona algebra. This implies that $\mathcal{M}(A)$ is nowhere scattered by \cite[Proposition 4.2]{ThiVil21arX:NowhereScattered}; see also \cite{KafNgZha19PurInfMultAlg} and \cite{LinSimplePurInfCorAlg}.
\end{exa}

The examples below appeared in \cite[Examples~4.14,~4.15]{ThiVil21arX:NowhereScattered}. We recall them here for the convenience of the reader.

\begin{exa}\label{exa:product}
 Let $A_k$ be the family of separable, simple, AH-algebras from \cite{RobRor13Divisibility} (see \autoref{exa:DirSumInfDiv}). As shown in \cite[Corollary~8.6]{RobRor13Divisibility}, the product $\prod_k A_k$ has a one-dimensional, irreducible representation. By \cite[Theorem~3.1]{ThiVil21arX:NowhereScattered}, this implies that $\prod_k A_k$ is not nowhere scattered.  Consequently, $A:=\oplus A_k$ is a nowhere scattered \ca{} with a multiplier algebra $\mathcal{M}(A)\cong \prod_k A_k$ that is not nowhere scattered.
\end{exa}

\begin{exa}\label{exa:simple}
 In \cite[Theorem~1]{Sak71DerivationsSimple3}, Sakai constructs a simple \ca{} $A$ such that its derived algebra $D(A)$ satisfies $D(A)/A\cong\CC$. Using that $D(A)\cong \mathcal{M}(A)$ for simple \ca{s} (see the remarks after \cite[Proposition~2.6]{Ped72ApplWeakSemicontinuity}), one sees that $\mathcal{M}(A)$ cannot be nowhere scattered.
 
 It will follow from \autoref{thm:MainMA} and \autoref{prp:FNucDimImpMWDiv} that $A$ is of infinite nuclear dimension. \cite[Corollary~1]{Sak71DerivationsSimple3} already showed that $A$ is not $\mathcal{Z}$-stable.
\end{exa}

The following lemma will play an important role in the proof of \autoref{prp:CoronaNSca}.

\begin{lma}\label{lma:NScaCarac}
 Let $A$ be a \ca{}. The following are equivalent:
 \begin{itemize}
  \item[(i)] $A$ is nowhere scattered.
  \item[(ii)] For every $a\in A_+$ and $n\in\NN$, the class $[a\otimes 1_n]\in\Cu (A)$ is weakly $(2n,\omega )$-divisible.
  \item[(iii)] For every $a\in A_+$, there exists $n=n(a)\in\NN$ such that the class $[a\otimes 1_{n}]\in\Cu (A)$ is weakly $(n+1,\omega )$-divisible.
 \end{itemize}
\end{lma}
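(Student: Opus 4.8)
The plan is to prove the cycle (i) $\Rightarrow$ (ii) $\Rightarrow$ (iii) $\Rightarrow$ (i). The implication (ii) $\Rightarrow$ (iii) is immediate: simply take $n=1$ in (iii), which requires $[a\otimes 1_1]=[a]$ to be weakly $(2,\omega)$-divisible, and this is a special case of (ii) with $n=1$ (noting $n+1=2=2n$ there). So the real content lies in (i) $\Rightarrow$ (ii) and (iii) $\Rightarrow$ (i).

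For (i) $\Rightarrow$ (ii), the key input is the characterization recalled in \autoref{pgr:2OmegaDiv}: $A$ is nowhere scattered if and only if every element of $\Cu(A)$ is weakly $(m,\omega)$-divisible for every $m$. So if $A$ is nowhere scattered, then in particular every element is weakly $(2n,\omega)$-divisible, and applying this to the specific element $[a\otimes 1_n]$ gives exactly (ii). This direction should be essentially a quotation of the existing theorem.

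For (iii) $\Rightarrow$ (i), the natural strategy is to show that the hypothesis forces weak $(2,\omega)$-divisibility of every class $[a]$, since by \autoref{pgr:2OmegaDiv} (together with part (i) of \autoref{prp:Weak2wSum}, which gives monotonicity $\wdiv_2 \leq \wdiv_m$) it suffices to establish weak divisibility at the base level $m=2$ for all elements. Fix $a\in A_+$ and let $n=n(a)$ be as given, so that $[a\otimes 1_n]$ is weakly $(n+1,\omega)$-divisible. The idea is to compare $[a]$ and $[a\otimes 1_n]=n[a]$: one has the obvious relations $[a]\leq n[a]\leq n[a]$, so by \autoref{prp:Weak2wSum}~(ii) (applied with the single element $b_1=a$, $N=n$, since $[a]\leq n[a]\leq n[a]$), a bound on $\wdiv$ of $n[a]$ at some level transfers to $[a]$. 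Specifically, weak $(n+1,\omega)$-divisibility of $n[a]$ gives, for each $x'\ll n[a]$, a decomposition $x'\leq y_1+\dots+y_k$ with $(n+1)y_j\leq n[a]$; the point is to convert this into a weak $(2,\omega)$-divisibility statement for $[a]$ itself. The hard part will be managing the arithmetic between the divisibility parameter $n+1$ and the multiplicity $n$: one must extract genuinely $2$-divisible pieces below $[a]$ from pieces that are only known to be $(n+1)$-divisible below $n[a]$, and this is where one needs to invoke the semigroup axioms \axiomO{5} and \axiomO{6} to split and relocate the elements $y_j$ so that doubled copies land below $[a]$ rather than merely below $n[a]$. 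I expect this combinatorial-order-theoretic step — showing that $(n+1,\omega)$-divisibility of $n[a]$ yields $(2,\omega)$-divisibility of $[a]$ — to be the main obstacle, as it is the one place where the delicate interplay between the divisibility level and the multiple must be resolved rather than quoted.
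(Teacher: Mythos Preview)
Your handling of (i)$\Rightarrow$(ii) and (ii)$\Rightarrow$(iii) is fine and matches the paper. The problem is the strategy for (iii)$\Rightarrow$(i).

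You propose to work entirely inside $\Cu(A)$: from weak $(n+1,\omega)$-divisibility of $n[a]$, extract weak $(2,\omega)$-divisibility of $[a]$ using only \axiomO{5} and \axiomO{6}. This cannot succeed. The paper itself points out (in the remark immediately following the lemma) that the implication (iii)$\Rightarrow$(i) \emph{fails} for abstract \CuSgp{s} satisfying \axiomO{5}--\axiomO{8}: in $S=\{0,1,\infty\}$ with $1+1=\infty$, the element $1$ is not weakly $(2,\omega)$-divisible, yet $2\cdot 1=\infty$ is weakly $(3,\omega)$-divisible, so (iii) holds while (i) fails. Any argument that uses only the order-theoretic axioms would prove the implication in $S$ as well, which is impossible. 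The ``hard combinatorial step'' you anticipate is therefore not merely hard but genuinely unavailable.

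What the paper does instead is use \ca{}-specific structure. It argues by contradiction: if $A$ is not nowhere scattered, then $A$ has an elementary ideal-quotient $J/I$ with $\Cu(J/I)\cong\NNbar$. Property~(iii) passes to $J/I$ (lift, divide in $\Cu(A)$, observe the witnesses lie in the ideal generated by the lift, then push down). One then locates $a'\in (J/I)_+$ with $[a']\mapsto 1\in\NNbar$; for any $n$, the image of $n[a']$ is $n\in\NNbar$, and the only $y\in\NNbar$ with $(n+1)y\leq n$ is $y=0$, so $n[a']$ is never weakly $(n+1,\omega)$-divisible, contradicting~(iii). The crucial point is that for \ca{s} the elementary obstruction is always $\NNbar$, never $\{0,1,\infty\}$; this is exactly the \ca{}-input your purely $\Cu$-semigroup plan is missing.
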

\begin{proof}
 We know from \cite[Theorem~8.9]{ThiVil21arX:NowhereScattered} that $A$ is nowhere scattered if and only if every element in $\Cu (A)$ is weakly $(2,\omega )$-divisible. Using \cite[Theorem~8.9]{ThiVil21arX:NowhereScattered}, it follows that every element is $(2n,\omega )$-divisible for every $n\in\NN$. This proves that (i)~implies~(ii), and it is trivial that (ii)~implies~(iii).
 
 To see that (iii) implies (i), assume for the sake of contradiction that $A$ is not nowhere scattered. Then, we know from \cite[Theorem~3.1]{ThiVil21arX:NowhereScattered} that there exist ideals $I\subseteq J\subseteq A$ such that $J/I$ is elementary, that is, $\Cu (I/J)\cong \Cu (\mathbb{C})\cong\NNbar$; see \cite[Theorem~4.4.4]{Eng14PhD}.
 
 Note that, since we are assuming that $A$ satisfies (iii), both $I$ and $I/J$ satisfy (iii) as well. Let $\phi$ be an isomorphism from $\Cu (I/J)$ to $\NNbar$. Now, it follows from \cite[Lemma~3.3]{ThiVil22arX:Glimm} that for every positive element $b\in (I/J)\otimes \mathcal{K}$ there exists $a\in (I/J)_+$ such that $[b]\leq \infty [a]$ in $\Cu (I/J)$. Thus, let $a\in (I/J)_+$ be such that $\phi ([a])\neq 0$. We get $1\ll 1\leq \phi ([a])$ and, using R{\o}rdam's lemma (see, for example, \cite[Theorem~2.30]{Thi17:CuLectureNotes}), there exists $a'\in (I/J)_+$ such that $1=\phi ([a'])$.
 
 Take $n\in\NN$ such that $[a'\otimes 1_n]$ is weakly $(n+1,\omega )$-divisible. Since $\phi$ is an isomorphism, we must have that $\phi ([a'\otimes 1_n])=n$ is also weakly $(n+1,\omega )$-divisible. However, the only element $x\in\NNbar$ such that $(n+1)x\leq n$ is zero. This contradicts the weak $(n+1,\omega )$-divisibility of $[a'\otimes 1_n]$.
 
 Thus, $A$ has no nonzero elementary ideal-quotients, as desired.
\end{proof}

\begin{rmk}\label{rmk:01inft}
 In the proof of `(iii)$\implies$(i)' in \autoref{lma:NScaCarac} above, we have used that a \ca{} is nowhere scattered if and only if it has no elementary ideal-quotients. As shown in \cite[Proposition~8.8]{ThiVil21arX:NowhereScattered}, a \CuSgp{} $S$ satisfying \axiomO{5}-\axiomO{8} has no elementary ideal-quotients if and only if every element in $S$ is weakly $(2,\omega )$-divisible.
 
 In light of this, one might expect `(iii)$\implies$(i)' to hold for every \CuSgp{} $S$ satisfying \axiomO{5}-\axiomO{8}, that is, that every element in $S$ is weakly $(2,\omega )$-divisible whenever for every element $x\in S$ there exists $n_x$ such that $n_x x$ is weakly $(n_x +1,\omega )$-divisible.
 
 However, this is not true: For example, $S=\{ 0,1,\infty \}$ is a \CuSgp{} satisfying \axiomO{5}-\axiomO{8}. $S$ is not weakly $(2,\omega )$-divisible, but every element in $S$ has a  properly infinite multiple. Thus, it satisfies (iii), but not (i).

 The reason for this disparity is due to the fact that, in the context of abstract Cuntz semigroups, $\NNbar$ is not the only elementary \CuSgp{}. In fact, $\{ 0,1,\infty \}$ is elementary; see \cite[Section~8]{ThiVil21arX:NowhereScattered}.
\end{rmk}

\autoref{prp:KirRor} below is \cite[Lemma~4.10]{KirRor02InfNonSimpleCalgAbsOInfty}, which uses some of the ideas from \cite{Ell74:DevMatAlg}. It was stated with the assumption of weak pure infiniteness, but an inspection of their proof shows that this is not actually needed. This was also stated, in a different way and with a different proof, in \cite[Theorem~4.2]{KafNgZha17StrCompMultiplier}.

\begin{lma}\label{prp:KirRor}
 Let $A$ be a $\sigma$-unital \ca{}, let $T\in \mathcal{M}(A)_+$, and let $\varepsilon >0$. Then $A$ has an increasing, countable, approximate unit $(e_n)_n$ of positive contractions with $e_{n+1}e_n=e_n$ such that 
 \[
  T = a + \sum_{n=1}^{\infty} f_{2n-1}^{1/2}Tf_{2n-1}^{1/2} + \sum_{n=1}^{\infty} f_{2n}^{1/2} Tf_{2n}^{1/2},\quad\quad 
  f_n := e_n - e_{n-1}
 \]
 with $e_0=0$ and $a\in A$ satisfying $\Vert a \Vert \leq \varepsilon$.
\end{lma}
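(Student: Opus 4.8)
The plan is to build the approximate unit $(e_n)_n$ so that it is, in a quantitative sense, quasicentral with respect to $T$, and then to read off the decomposition as a controlled perturbation of the identity that would hold if $T$ commuted with every $e_n$. The starting observation, valid for \emph{any} increasing approximate unit of positive contractions with $e_0=0$, is the elementary identity $f_n^{1/2}Tf_n^{1/2}=f_nT+f_n^{1/2}[T,f_n^{1/2}]$, which combined with the telescoping $\sum_{n=1}^{N}f_n=e_N$ yields
\[
 \sum_{n=1}^{N}f_n^{1/2}Tf_n^{1/2}=e_NT+\sum_{n=1}^{N}f_n^{1/2}[T,f_n^{1/2}].
\]
Since $f_n^{1/2}\in A$ and $A$ is an ideal in $\mathcal{M}(A)$, each summand $f_n^{1/2}[T,f_n^{1/2}]$ lies in $A$, and the whole error term will be $a:=-\sum_{n}f_n^{1/2}[T,f_n^{1/2}]$. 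The point of the construction is thus to force $\sum_n\|f_n^{1/2}[T,f_n^{1/2}]\|\leq\varepsilon$, so that this series converges in norm to an element of $A$ of norm at most $\varepsilon$.

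The main work is the inductive construction of $(e_n)_n$. I would begin with a sequential approximate unit of $A$, which exists since $A$ is $\sigma$-unital, chosen quasicentral relative to $T$ (Arveson's theorem on quasicentral approximate units applies, as $\{T\}$ is separable), and then refine it so that in addition $e_{n+1}e_n=e_n$ for all $n$. The nesting is arranged by the usual functional-calculus trick: given $e_n$, choose a later member $u$ of the quasicentral approximate unit acting as a near-unit on $e_n$, and apply to $u$ a continuous function equal to $1$ on a neighbourhood of the relevant part of its spectrum, producing a positive contraction $e_{n+1}$ with $e_{n+1}e_n=e_n$ \emph{exactly}, still an approximate unit and still close to $u$. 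Because $t\mapsto t^{1/2}$ is a fixed continuous function on $[0,1]$, for every $\eta>0$ there is $\delta>0$, independent of the particular contraction, such that $0\leq b\leq 1$ and $\|[b,T]\|<\delta$ force $\|[b^{1/2},T]\|<\eta$ (approximate $t^{1/2}$ uniformly by polynomials and use $\|[p(b),T]\|\leq C_p\|[b,T]\|$); let $\delta_n$ be such a $\delta$ for $\eta=\varepsilon 2^{-n}$. Using quasicentrality I would arrange at each step $\|[e_n,T]\|<\tfrac14\min\{\delta_n,\delta_{n+1}\}$. The delicate point, and the one I expect to be the main obstacle, is running this single induction so that exact nesting, the approximate-unit property, and the quantitative quasicentrality estimate all hold at once: exact nesting pushes one toward functions of a single element, whereas quasicentrality is genuinely noncommutative, so the function producing $e_{n+1}$ must be chosen close enough to the identity on the spectrum that it spoils the already-small commutator only negligibly.

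With such $(e_n)_n$ in hand, the nesting gives $f_nf_m=0$, hence $f_n^{1/2}f_m^{1/2}=0$, whenever $|n-m|\geq 2$; in particular $\sum_n f_{2n-1}\leq 1$ and $\sum_n f_{2n}\leq 1$ are increasing limits of positive contractions, so each of $\sum_n f_{2n-1}^{1/2}Tf_{2n-1}^{1/2}$ and $\sum_n f_{2n}^{1/2}Tf_{2n}^{1/2}$ converges strictly in $\mathcal{M}(A)$. For the estimate, $\|[f_n,T]\|\leq\|[e_n,T]\|+\|[e_{n-1},T]\|<\delta_n$ by the choices above, whence $\|f_n^{1/2}[T,f_n^{1/2}]\|\leq\|[f_n^{1/2},T]\|<\varepsilon 2^{-n}$, so $\sum_n f_n^{1/2}[T,f_n^{1/2}]$ converges in norm to some $a'\in A$ with $\|a'\|\leq\varepsilon$. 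Letting $N\to\infty$ in the displayed identity and using that $e_NT\to T$ strictly (multiplication being strictly continuous on bounded sets) gives $\sum_n f_n^{1/2}Tf_n^{1/2}=T+a'$ strictly. Setting $a:=-a'$ then produces $T=a+\sum_n f_{2n-1}^{1/2}Tf_{2n-1}^{1/2}+\sum_n f_{2n}^{1/2}Tf_{2n}^{1/2}$ with $a\in A$ and $\|a\|\leq\varepsilon$, as required.
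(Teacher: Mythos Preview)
The paper does not give its own proof of this lemma: immediately before the statement it records that the result is \cite[Lemma~4.10]{KirRor02InfNonSimpleCalgAbsOInfty} (with ideas going back to \cite{Ell74:DevMatAlg}), and that an inspection of the Kirchberg--R{\o}rdam argument shows the weak pure infiniteness hypothesis there is not needed; it also points to \cite[Theorem~4.2]{KafNgZha17StrCompMultiplier} for an alternative formulation.

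Your sketch is precisely the Kirchberg--R{\o}rdam/Elliott argument: build an increasing approximate unit $(e_n)_n$ with $e_{n+1}e_n=e_n$ that is quasicentral for $T$ with summable commutator bounds, use the telescoping identity $\sum_{n\leq N} f_n^{1/2}Tf_n^{1/2}=e_NT+\sum_{n\leq N} f_n^{1/2}[T,f_n^{1/2}]$, and pass to the strict limit. The one point you flag as delicate---arranging \emph{exact} nesting $e_{n+1}e_n=e_n$ simultaneously with the quantitative commutator control---is indeed the only nontrivial step, and it is exactly what is carried out in the cited references; so your proposal agrees with the argument the paper defers to.
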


In the proof of \autoref{prp:CoronaNSca} we will also need the following:

\begin{lma}\label{prp:DiagSumDiv}
Let $A$ be a \ca{}, and let $a,b\in A_+$ be elements such that $\mwdiv ([a]) , \mwdiv ( [b])<\infty$. Then $\mwdiv ([a+b])<\infty$.

In particular, there exists $n\in\NN$ such that $(a+b)\otimes 1_{n}$ has a $(n+1 ,\omega )$-divisible class.
\end{lma}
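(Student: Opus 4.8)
The plan is to first reduce the two hypotheses to a single common modulus, and then invoke the additivity of $\wdiv$. Since $\mwdiv([a])<\infty$, there is $m_a\in\NN$ with $n_a:=\wdiv_{2m_a}(m_a[a])<\infty$, and likewise $m_b\in\NN$ with $n_b:=\wdiv_{2m_b}(m_b[b])<\infty$. The difficulty is that the witnessing moduli $m_a$ and $m_b$ need not agree, so I cannot add the two divisibility estimates directly. To remedy this I would apply \autoref{prp:DivStComp}: taking the element there to be $m_a[a]$, the modulus to be $2m_a$, and $k=m_b$, it yields $\wdiv_{2m_am_b}((m_am_b)[a])\leq\wdiv_{2m_a}(m_a[a])=n_a$; symmetrically $\wdiv_{2m_am_b}((m_am_b)[b])\leq n_b$. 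Thus, setting $m:=m_am_b$, both $\wdiv_{2m}(m[a])\leq n_a$ and $\wdiv_{2m}(m[b])\leq n_b$ hold for the \emph{same} $m$.

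Next I would combine these using part (iii) of \autoref{prp:Weak2wSum}. The key observation is that, realizing $m[a]$ and $m[b]$ as the classes of the diagonal elements $a\otimes 1_m$ and $b\otimes 1_m$ in $M_m(A)_+$, their sum is exactly $(a+b)\otimes 1_m$, so that $m[a+b]=[(a+b)\otimes 1_m]=[a\otimes 1_m+b\otimes 1_m]$. Applying \autoref{prp:Weak2wSum}~(iii) to these two positive elements with modulus $2m$ then gives
\[
 \wdiv_{2m}(m[a+b])\leq\wdiv_{2m}(m[a])+\wdiv_{2m}(m[b])\leq n_a+n_b<\infty.
\]
Since $\mwdiv([a+b])=\inf_{m'}\wdiv_{2m'}(m'[a+b])\leq\wdiv_{2m}(m[a+b])$, this proves $\mwdiv([a+b])<\infty$. (Alternatively one can bypass the element-level identity and invoke \autoref{prp:Weak2wSum}~(ii) with $[b_1]=m[a]$, $[b_2]=m[b]$ and $N=1$, using only $m[a],m[b]\leq m[a+b]\leq m[a]+m[b]$.)

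Finally, for the ``in particular'' clause I would take $n:=m$ with the $m$ just produced. By construction $m[a+b]=[(a+b)\otimes 1_n]$ is weakly $(2m,N)$-divisible for $N=\wdiv_{2m}(m[a+b])$; that is, for each $x'\ll m[a+b]$ there are $y_1,\dots,y_N$ with $x'\leq y_1+\dots+y_N$ and $2m\,y_j\leq m[a+b]$. As $m\geq 1$ we have $m+1\leq 2m$, so $(m+1)y_j\leq (2m)y_j\leq m[a+b]$ for every $j$, and hence $[(a+b)\otimes 1_n]$ is weakly $(n+1,\omega)$-divisible with $n=m$. The main obstacle in the whole argument is precisely the mismatch of the two moduli $m_a$ and $m_b$; once \autoref{prp:DivStComp} is used to pass to the common modulus $m_am_b$, everything reduces to the already-established additivity of weak divisibility.
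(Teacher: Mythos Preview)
Your proof is correct and follows essentially the same approach as the paper: pass to the common modulus $m=m_am_b$ via \autoref{prp:DivStComp} and then use the additivity of $\wdiv$ from \autoref{prp:Weak2wSum}(iii) applied to $a\otimes 1_m$ and $b\otimes 1_m$. The paper merely swaps the order of these two steps (first additivity, then \autoref{prp:DivStComp} on each summand), and leaves the ``in particular'' clause implicit, which you spell out correctly using \autoref{prp:Weak2wSum}(i).
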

\begin{proof}
 Let $n_a,n_b\in\NN$ be such that $\wdiv_{2n_a}(n_a[a]),\wdiv_{2n_b}(n_b[b])<\infty$.

 Using that $n_a n_b[a+b]=[a\otimes 1_{n_a n_b} + b\otimes 1_{n_a n_b}]$, we can apply \autoref{prp:Weak2wSum}~(iii) and get 
 \[
  \wdiv_{2n_a n_b}(n_an_b[a+b]) \leq \wdiv_{2n_a n_b}(n_an_b[a]) + 
  \wdiv_{2n_a n_b}(n_an_b[b]).
 \]

 Thus, applying \autoref{prp:DivStComp} to both summands, we obtain 
 \[
  \wdiv_{2n_a n_b}(n_an_b[a+b]) \leq \wdiv_{2n_a}(n_a[a]) + 
  \wdiv_{2 n_b}(n_b[b])<\infty ,
 \]
 as desired.
\end{proof}


\begin{lma}\label{prp:StConvSumsDiv}
 Let $A$ be a $\sigma$-unital \ca{}, and let $(a_i)_{i=1}^\infty$ be a bounded sequence in $A$ of pairwise orthogonal elements such that $R=\sum_i a_i$ is strictly convergent and $a_i\perp e_{i-1}$ for every $i\geq 1$, where $(e_i)_i$ is an approximate unit as in \autoref{prp:KirRor}.
 
 Let $m\in\NN$.  Then, for every $\varepsilon_0 >0$, one has
 \[
  \wdiv_{2m}(m[R])\leq \sup_{i}\sup_{0\leq \varepsilon \leq \varepsilon_0} \wdiv_{2m}(m[(a_i -\varepsilon )_+])
 \]
 in $\Cu (\mathcal{M}(A))$.
\end{lma}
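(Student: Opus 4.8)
The plan is to first peel off the cut-downs of $R$ so that the claim is reduced to a single fixed $\varepsilon$, and then to transport the divisibility of the summands $(a_i-\varepsilon)_+$ across the strictly convergent orthogonal sum. Write $N:=\sup_i\sup_{0\le\varepsilon\le\varepsilon_0}\wdiv_{2m}(m[(a_i-\varepsilon)_+])$ and assume $N<\infty$. Since the $a_i$ are pairwise orthogonal, functional calculus gives $(R-\varepsilon)_+=\sum_i(a_i-\varepsilon)_+$ as a strictly convergent orthogonal sum for each $\varepsilon$, and $m[R]=\sup_k m[(R-\varepsilon_k)_+]$ for any $\varepsilon_k\downarrow 0$ with $\varepsilon_k\le\varepsilon_0$. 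By \autoref{prp:Weak2wSum}~(v) it then suffices to fix $\varepsilon\in(0,\varepsilon_0]$ and prove $\wdiv_{2m}(m[R_\varepsilon])\le N$, where $R_\varepsilon:=(R-\varepsilon)_+=\sum_i b_i$ with $b_i:=(a_i-\varepsilon)_+$. These summands are pairwise orthogonal, satisfy $b_i\perp e_{i-1}$ (as $b_i\in\overline{a_iAa_i}$ and $a_ie_{i-1}=0$), are uniformly bounded, and have $\wdiv_{2m}(m[b_i])\le N$.

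Next I would manufacture the witnesses from the divisibility of the individual summands. Fix $x'\ll m[R_\varepsilon]$; by R{\"o}rdam's lemma (see \cite[Theorem~2.30]{Thi17:CuLectureNotes}) there is $\delta>0$ with $x'\le m[(R_\varepsilon-\delta)_+]$, and $(R_\varepsilon-\delta)_+=\sum_i c_i$ with $c_i:=(b_i-\delta)_+$. For each $i$, applying the weak $(2m,N)$-divisibility of $m[b_i]$ to $m[c_i]\ll m[b_i]$ yields $y_{i,1},\dots,y_{i,N}$ with $m[c_i]\le\sum_j y_{i,j}$ and $2m\,y_{i,j}\le m[b_i]$. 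Since $y_{i,j}\le m[b_i]=[b_i\otimes 1_m]$, I may realize $y_{i,j}=[d_{i,j}]$ with $d_{i,j}\in(\overline{b_iAb_i}\otimes M_m)_+$, normalized so that $\|d_{i,j}\|\le\|b_i\|$, pairwise orthogonal in $i$ and orthogonal to $e_{i-1}\otimes 1_m$. Because $\mathcal{M}(A\otimes M_m)=\mathcal{M}(A)\otimes M_m$ and the $d_{i,j}$ are uniformly bounded with staggered supports, the orthogonal sums $D_j:=\sum_i d_{i,j}$ converge strictly and define classes in $\Cu(\mathcal{M}(A))$.

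It would then remain to verify the two defining inequalities for $m[R_\varepsilon]$: that $2m[D_j]\le m[R_\varepsilon]\le m[R]$, and that $x'\le\sum_{j=1}^N[D_j]$. The first should follow from $D_j\otimes 1_{2m}=\sum_i(d_{i,j}\otimes 1_{2m})\precsim\sum_i(b_i\otimes 1_m)=R_\varepsilon\otimes 1_m$, using $d_{i,j}\otimes 1_{2m}\precsim b_i\otimes 1_m$ for each $i$; the second from $(R_\varepsilon-\delta)_+\otimes 1_m=\sum_i(c_i\otimes 1_m)\precsim\sum_i\big(d_{i,1}\oplus\dots\oplus d_{i,N}\big)=D_1\oplus\dots\oplus D_N$, using $c_i\otimes 1_m\precsim d_{i,1}\oplus\dots\oplus d_{i,N}$ for each $i$, together with $x'\le m[(R_\varepsilon-\delta)_+]$.

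Both reduce to the same principle: Cuntz subequivalence of the summands of two strictly convergent orthogonal sums should transfer to the sums themselves. This is the crux, and the main obstacle. It cannot be obtained merely by truncating and taking suprema, since for genuine multiplier elements one has only $\sup_n[\sum_{i\le n}b_i]\le[R_\varepsilon]$, typically with strict inequality: the finite truncations miss the behaviour ``at infinity'' of the strict sum, so the sup-of-truncations witnesses fail to capture $x'$. Instead I would, for fixed $\eta>0$ and generic summands $s_i\precsim t_i$, use R{\"o}rdam's lemma to choose implementing elements $w_i$ in the orthogonal corners with $(s_i-\eta)_+=w_i t_i w_i^*$, arrange them to have pairwise orthogonal left and right supports inheriting $w_i\perp e_{i-1}$, and assemble $W:=\sum_i w_i$; orthogonality would kill the cross terms, giving $WTW^*=\sum_i(s_i-\eta)_+=(S-\eta)_+$ and hence $S\precsim T$. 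The delicate point—and the reason the hypotheses impose a bounded sequence with the staggered condition $a_i\perp e_{i-1}$ and the approximate unit relation $e_{n+1}e_n=e_n$ of \autoref{prp:KirRor}—is securing a uniform norm bound on the $w_i$ (Cuntz subequivalence alone does not control norms), so that $W$ is a genuine strictly convergent multiplier; this bounded-implementer estimate is the technical heart, in the spirit of the strict-sum manipulations of Kirchberg and R{\"o}rdam, and is where the careful choice of the representatives $d_{i,j}$ must be exploited.
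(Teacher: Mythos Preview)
Your overall architecture matches the paper's: decompose $R$ as an orthogonal strict sum, divide each summand, reassemble the witnesses into strictly convergent sums $D_j$, and verify the two inequalities. You also correctly isolate the crux---that passing Cuntz subequivalence through a strict orthogonal sum requires \emph{uniformly bounded} implementers. But your proposal treats the two inequalities symmetrically, and this is where a genuine gap appears.

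For the inequality $2m[D_j]\le m[R]$ your plan does work once made precise: since $d_{i,j}\otimes 1_{2m}\precsim b_i\otimes 1_m=(a_i-\varepsilon)_+\otimes 1_m$, the $\varepsilon$-gap to $a_i\otimes 1_m$ lets one invoke \cite[Lemma~2.2]{KafNgZha17StrCompMultiplier} to obtain $r_{i,j}$ with $(d_{i,j}-\delta)_+\otimes 1_{2m}=r_{i,j}(a_i\otimes 1_m)r_{i,j}^*$ and $\sup_i\|r_{i,j}\|^2\le \tfrac{1}{\varepsilon}\sup_i\|d_{i,j}\|$; the paper does exactly this. (A side remark: this shows $2m[D_j]\le m[R]$, not $2m[D_j]\le m[R_\varepsilon]$, so your reduction via \autoref{prp:Weak2wSum}~(v) to $\wdiv_{2m}(m[R_\varepsilon])$ is slightly misaligned---just work directly with $x'\ll m[R]$ as the paper does.)

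The real problem is the other inequality. You propose to get $(R_\varepsilon-\delta)_+\otimes 1_m\precsim D_1\oplus\cdots\oplus D_N$ from $c_i\otimes 1_m\precsim d_{i,1}\oplus\cdots\oplus d_{i,N}$ via uniformly bounded implementers. But here there is \emph{no} $\varepsilon$-gap: weak divisibility gives only $m[c_i]\le\sum_j[d_{i,j}]$, with nothing like $c_i\otimes 1_m\precsim(\bigoplus_j d_{i,j}-\alpha)_+$ for a uniform $\alpha>0$, so the norm bound from R{\o}rdam/KNZ is unavailable and the implementers may blow up. The paper avoids this obstacle altogether: instead of seeking implementers, it uses \cite[Lemma~2.3(i)]{RobRor13Divisibility} to upgrade the Cuntz inequality $m[(a_i-\varepsilon/3)_+]\le\sum_j[b_{i,j}]$ to an \emph{exact} decomposition $(a_i-2\varepsilon/3)_+\otimes 1_m=\sum_j c_{i,j}$ with $c_{i,j}\precsim b_{i,j}$, then sets $d_{i,j}:=((a_i-2\varepsilon/3)_+\otimes 1_m)\,c_{i,j}\,((a_i-2\varepsilon/3)_+\otimes 1_m)$. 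Summing first over $j$ gives $((a_i-2\varepsilon/3)_+\otimes 1_m)^3$, and then over $i$ gives the literal equality $(R-2\varepsilon/3)_+^3\otimes 1_m=\sum_j R_j$, from which $m[(R-\varepsilon)_+]\ll\sum_j[R_j]$ follows without any implementer argument. This exact-decomposition trick is the missing ingredient in your proposal.
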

\begin{proof}
 Let $\varepsilon_0>0$. Assume that $n:=\sup_i\sup_{\varepsilon\leq \varepsilon_0} \wdiv_{2m}(m[(a_i-\varepsilon )_+])$ is finite, since otherwise there is nothing to prove. Fix $\varepsilon \leq\varepsilon_0$ positive. By assumption, $(a_i-\tfrac{\varepsilon}{4})_+\otimes 1_m$ is weakly $(2m,n)$-divisible for each $i$. Thus, we can find elements $b_{i,j}\in A\otimes\mathcal{K}$ for $j=1,\ldots , n$ such that 
\[
 m[(a_i-\varepsilon/3 )_+]\leq [b_{i,1}]+\ldots +[b_{i,n}]
 ,\andSep
 2m[b_{i,j}]\leq m[(a_i-\varepsilon/4 )_+]
\]
for each $i$ and $j$.

It follows from \cite[Lemma~2.3(i)]{RobRor13Divisibility} that there exist elements $c_{i,j}\in M_m(A)_+$ with  $c_{i,j}\precsim b_{i,j}$ such that 
\[
 (a_i - 2\varepsilon/3 )_+\otimes 1_m = \sum_{j=1}^n c_{i,j}.
\]

Set $d_{i,j}:= ((a_i - 2\varepsilon /3 )_+\otimes 1_m) c_{i,j} ((a_i - 2\varepsilon /3)_+\otimes 1_m)\in M_m (A)$. Note that every entry in $d_{i,j}$ is in $\overline{a_i Aa_i}$, and is thus orthogonal with $e_{i-1}$. Further, since the sequence $(d_{i,j})_i$ is bounded for every fixed $j$, so is each of the induced entry-wise sequences. This shows that the (strictly convergent) sum $R_j := \sum_{i} d_{i,j}$ is in $M_m (\mathcal{M}(A))$.

Thus, we have 
\[
 (R-2\varepsilon/3 )_+^3\otimes 1_m = \sum_{j=1}^n R_j
\]
and, therefore, $m[(R-\varepsilon )_+]\ll \sum_{j=1}^n [R_j]$.

Let $\delta >0$ be such that $m[(R-\varepsilon )_+]\leq \sum_{j=1}^n [(R_j -\delta )_+]$. Applying \cite[Lemma~2.2]{KafNgZha17StrCompMultiplier} at 
\[
 d_{i,j}\otimes 1_{2m}\precsim c_{i,j}\otimes 1_{2m}\precsim 
 b_{i,j}\otimes 1_{2m}\precsim (a_i-\varepsilon/4 )_+\otimes 1_m
\]
we find elements $r_{i,j}$ such that 
\[
 (d_{i,j}-\delta)_+\otimes 1_{2m} = r_{i,j}(a_i\otimes 1_m)r_{i,j}^*,\andSep 
 \sup_i \Vert r_{i,j}\Vert^2 \leq \frac{4}{\varepsilon} \sup_i \Vert d_{i,j}\otimes 1_{2m}\Vert.
\]

Set $x_{i,j}:= (a_i\otimes 1_m)^{1/2}r_{i,j}^* (d_{i,j}\otimes 1_{2m}-\delta )_+$. We get
 \[
  x_{i,j}^*x_{i,j}= (d_{i,j}-\delta )_+^3\otimes 1_{2m},\andSep 
  x_{i,j}x_{i,j}^*\in \overline{(a_i\otimes 1_m) M_m(A) (a_i\otimes 1_m)},
 \]
 with both $\left((a_i\otimes 1_m)^{1/2}r_{i,j}^*\right)_i$ and $\left( r_{i,j}^* (d_{i,j}\otimes 1_{2m}-\delta )_+\right)_i$ being bounded sequences for each fixed $j$.
 
 Let $x_{i,j}(s,r)$ denote the $(s,r)$-th entry of $x_{i,j}$. Working as in \cite[Proposition~4.4]{KafNgZha17StrCompMultiplier}, one can now show that the following sums are all strictly convergent
 \[
  x_{j}(s,r):=\sum_i x_{i,j}(s,r).
 \]

Set $x_{j}:=(x_{j}(s,r))_{s,r}\in M_{m,2m^2}(\mathcal{M}(A))$. One gets
\[
 x_j^* x_j = (R_j -\delta)_+^3\otimes 1_{2m}\sim (R_j -\delta)_+\otimes 1_{2m},
 \]
 and 
 \[
 x_j x_j^*\in \overline{(R\otimes 1_m)M_m (\mathcal{M}(A) )(R\otimes 1_m)}.
\]

It follows that, in $\Cu (\mathcal{M}(A))$, we have $2m[(R_j-\delta )_+]\leq m[R]$ for each $j\in\NN$. Since $[R]=\sup_{0\leq \varepsilon\leq \varepsilon_0}[(R-\varepsilon )_+]$, we have that $m[R]$ is weakly $(2m,n)$-divisible, as desired.
\end{proof}

Let us say that a sequence $(x_i)_i$ in a \ca{} $A$ is \emph{orthogonal} if the elements in the sequence are pairwise orthogonal, that is, if $x_i x_j=0$ whenever $i\leq j$.

\begin{prp}\label{prp:CoronaNSca}
 Let $A$ be a $\sigma$-unital \ca{}. Assume that for every orthogonal sequence $(a_i)_i$ of positive elements in $A$ there exist $m,n\in\NN$ such that $\wdiv_{2m} (m[a_i])\leq n$ for each $i$. Then $\wdiv_{2m} (m[T])<\infty$ for every $T\in \mathcal{M}(A)/A$.
 
 In particular, $\mathcal{M}(A)/A$ is nowhere scattered.
\end{prp}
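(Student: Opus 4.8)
The plan is to pull the problem back to the corona by decomposing a given $T\in\mathcal{M}(A)_+$ via \autoref{prp:KirRor} and estimating the divisibility of the two resulting ``half-sums'' with \autoref{prp:StConvSumsDiv}. Fix $T\in\mathcal{M}(A)_+$ and $\varepsilon_0>0$, and let $(e_n)_n$ and $f_n=e_n-e_{n-1}$ be as in \autoref{prp:KirRor}, so that $T=a+R_{\mathrm{odd}}+R_{\mathrm{even}}$ with $a\in A$, $\|a\|\le\varepsilon_0$, and $R_{\mathrm{odd}}=\sum_n b_n$, $R_{\mathrm{even}}=\sum_n b_n'$ for $b_n=f_{2n-1}^{1/2}Tf_{2n-1}^{1/2}$, $b_n'=f_{2n}^{1/2}Tf_{2n}^{1/2}$. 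Since $e_{n+1}e_n=e_n$ forces $f_if_j=0$ whenever $|i-j|\ge 2$, each of the families $(b_n)_n$ and $(b_n')_n$ is an orthogonal sequence of positive elements of $A$, and $b_n\perp e_{n-1}$ (because $f_{2n-1}\perp e_{2n-2}\ge e_{n-1}$), so both meet the hypotheses of \autoref{prp:StConvSumsDiv}.

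Granting for the moment that the relevant suprema are finite, I would first apply the standing assumption to the orthogonal sequence $(b_n)_n$ to obtain $m,N\in\NN$ with $\wdiv_{2m}(m[b_n])\le N$ for all $n$, and similarly for $(b_n')_n$; \autoref{prp:StConvSumsDiv} then yields $\wdiv_{2m}(m[R_{\mathrm{odd}}])<\infty$ and $\wdiv_{2m'}(m'[R_{\mathrm{even}}])<\infty$ in $\Cu(\mathcal{M}(A))$, that is, $\mwdiv([R_{\mathrm{odd}}]),\mwdiv([R_{\mathrm{even}}])<\infty$. By \autoref{prp:DiagSumDiv} the sum satisfies $\mwdiv([R_{\mathrm{odd}}+R_{\mathrm{even}}])<\infty$, so some $m''$ gives $\wdiv_{2m''}(m''[R_{\mathrm{odd}}+R_{\mathrm{even}}])<\infty$. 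Since $a\in A$, in the corona one has $\pi(T)=\pi(R_{\mathrm{odd}}+R_{\mathrm{even}})$, and the induced surjective \CuMor{} $\Cu(\pi)$ sends weakly $(2m'',n)$-divisible elements to weakly $(2m'',n)$-divisible ones; hence $\wdiv_{2m''}(m''[T])<\infty$ in $\Cu(\mathcal{M}(A)/A)$. As $T$ was arbitrary this is the asserted finiteness, and nowhere scatteredness of $\mathcal{M}(A)/A$ then follows from the implication (iii)$\implies$(i) of \autoref{lma:NScaCarac}: with $n:=m''$, the element $m''[T]=[T\otimes 1_{m''}]$ is weakly $(2m'',\omega)$-divisible, hence a fortiori weakly $(m''+1,\omega)$-divisible (the same pieces $y_j$ satisfy $(m''+1)y_j\le 2m''y_j$).

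The crux — the only nonroutine step — is verifying, for a single fixed $m$, that $\sup_n\sup_{0\le\varepsilon\le\varepsilon_0}\wdiv_{2m}(m[(b_n-\varepsilon)_+])<\infty$, as \autoref{prp:StConvSumsDiv} requires. The obstruction is that divisibility does not transfer between an element and its proper truncations: neither the full class $[b_n]$ nor the smallest truncation $[(b_n-\varepsilon_0)_+]$ controls an intermediate $[(b_n-\varepsilon)_+]$, because shrinking the target tightens the constraint $2m y_j\le m[(b_n-\varepsilon)_+]$ on the pieces, while enlarging it weakens the covering relation $x'\le\sum_j y_j$ — so the standard \axiomO{5}/\axiomO{6} manipulations produce pieces below $m[(b_n-\varepsilon)_+]$ but of the wrong size, or of the right size but only below $m[b_n]$.

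My plan for this step is threefold. First, exploit the one-sided semicontinuity supplied by \autoref{prp:Weak2wSum}~(v): if $\varepsilon_k\searrow\varepsilon$ then $(b_n-\varepsilon_k)_+\nearrow(b_n-\varepsilon)_+$, so $\wdiv_{2m}(m[(b_n-\varepsilon)_+])\le\sup_k\wdiv_{2m}(m[(b_n-\varepsilon_k)_+])$; choosing $\varepsilon_0$ and the $\varepsilon_k$ rational, this replaces the supremum over $[0,\varepsilon_0]$ by one over the countable set $[0,\varepsilon_0]\cap\QQ$. Second, for each such level $\delta$ the family $((b_n-\delta)_+)_n$ is again an orthogonal sequence in $A$, so the hypothesis applies and returns a level-dependent index $m_\delta$ and bound $N_\delta$. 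Third, I would promote these to a common index using \autoref{prp:DivStComp}, which permits replacing $m_\delta$ by any multiple without worsening the bound, together with a diagonal extraction over distinct indices $n$ to exclude an unbounded family (an unbounded one would, after thinning to distinct $n$, be an orthogonal sequence violating the hypothesis). I expect the genuinely delicate point to be precisely the reconciliation of the \emph{fixed} index $m$ demanded by \autoref{prp:StConvSumsDiv} with the \emph{level-dependent} indices $m_\delta$ returned by the hypothesis across infinitely many truncation levels; this is where the orthogonal-sequence formulation of the assumption and the exact quantitative content of \autoref{prp:DivStComp} must be deployed together.
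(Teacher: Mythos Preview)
Your overall architecture is exactly the paper's: decompose $T$ via \autoref{prp:KirRor}, bound $\mwdiv$ of each half-sum by \autoref{prp:StConvSumsDiv}, add the two pieces with \autoref{prp:DiagSumDiv}, pass to the corona, and invoke \autoref{lma:NScaCarac}. The paper's proof is two sentences and does not discuss the double supremum at all; you have correctly isolated the only nontrivial verification, namely that the right-hand side of \autoref{prp:StConvSumsDiv} is finite for some fixed $m$.

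Your resolution, however, has a gap. The diagonal extraction ``thin to distinct $n$'' presupposes that an unbounded family $\{(n,\varepsilon):\wdiv_{2m}(m[(b_n-\varepsilon)_+])>N\}$ always involves infinitely many distinct $n$. It need not: all the badness could be concentrated in a single $b_{n_0}$, i.e.\ for every $(m,N)$ there is an $\varepsilon$ with $\wdiv_{2m}(m[(b_{n_0}-\varepsilon)_+])>N$, while every other $b_n$ is perfectly well behaved. The cut-downs $(b_{n_0}-\varepsilon)_+$ are not mutually orthogonal, so you cannot feed them to the hypothesis, and \autoref{prp:DivStComp} gives no relation between divisibility of an element and of a proper cut-down.

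The missing observation is that you are working in the corona. Define $B_{m,N}=\{n:\exists\,\varepsilon\in[0,\varepsilon_0]\text{ with }\wdiv_{2m}(m[(b_n-\varepsilon)_+])>N\}$. If $B_{m,N}$ is infinite for every pair $(m,N)$, then your diagonal does go through: choose $n_1<n_2<\cdots$ with $n_k\in B_{m_k,N_k}$ for an enumeration of $\NN\times\NN$, and the orthogonal sequence $c_k=(b_{n_k}-\varepsilon_k)_+$ contradicts the hypothesis. Otherwise some $B_{m_0,N_0}$ is finite; then $\sum_{n\in B_{m_0,N_0}}b_n\in A$ vanishes under $\pi$, so you may discard those terms and apply \autoref{prp:StConvSumsDiv} with $m=m_0$ to the remaining (cofinite) sum, whose double supremum is at most $N_0$. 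This dichotomy, together with the rest of your argument, completes the proof.
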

\begin{proof}
Let $\pi\colon \mathcal{M}(A)\to \mathcal{M}(A)/A$ denote the quotient map. Using Lemmas \ref{prp:KirRor} and \ref{prp:StConvSumsDiv}, we deduce that each positive element in the corona algebra can be written as $\pi (R_1)+\pi (R_2)$ with $\mwdiv (R_1), \mwdiv (R_2)<\infty$. 

Using \autoref{prp:DiagSumDiv}, one has $\mwdiv (\pi (T))<\infty$ in $\mathcal{M}(A)/A$ for every element $T\in\mathcal{M}(A)$. \autoref{lma:NScaCarac} shows that the corona algebra is nowhere scattered, as desired.
\end{proof}


%

\begin{rmk}\label{rmk:ExaSimpNotDiv}
 Let $A$ be the simple, nowhere scattered \ca{} from \autoref{exa:simple}. It follows from \autoref{prp:CoronaNSca} above that there must exist an element $a\in (\oplus_{i=1}^\infty A)_+$ such that $\mwdiv ([a])=\infty$.
\end{rmk}

Using \autoref{prp:CoronaNSca}, we can now prove the main result of the paper.

\begin{thm}\label{thm:MainMA}
 Let $A$ be a $\sigma$-unital \ca{}. Assume that for every orthogonal sequence $(a_i)_i$ of positive elements in $A$ there exist $m,n\in\NN$ such that $\wdiv_{2m} (m[a_i])\leq n$ for each $i$. Then $\mathcal{M}(A)$ is nowhere scattered.
\end{thm}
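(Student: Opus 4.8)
The plan is to read the statement off from \autoref{prp:CoronaNSca} together with the extension $0\to A\to\mathcal{M}(A)\to\mathcal{M}(A)/A\to 0$. First I would record that $A$ itself is nowhere scattered. Applying the hypothesis to the orthogonal sequence whose only nonzero entry is a given $a\in A_+$ produces $m,n\in\NN$ with $\wdiv_{2m}(m[a])\leq n$, so that $\mwdiv([a])<\infty$; since $\wdiv_{m+1}([a\otimes 1_m])\leq\wdiv_{2m}([a\otimes 1_m])\leq n$ by \autoref{prp:Weak2wSum}~(i), the class $[a\otimes 1_m]$ is weakly $(m+1,\omega)$-divisible, and \autoref{lma:NScaCarac} gives that $A$ is nowhere scattered. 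In particular $\mwdiv([b])<\infty$ for every $b\in A_+$.

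The main step is to transfer finite multiple-divisibility across the extension. By \autoref{prp:CoronaNSca} we already know $\mwdiv(\pi(T))<\infty$ for every $T\in\mathcal{M}(A)$, where $\pi\colon\mathcal{M}(A)\to\mathcal{M}(A)/A$ denotes the quotient map. I would then run the argument of \autoref{prp:BoundDivExt} for the ideal $A\subseteq\mathcal{M}(A)$, but element by element rather than with uniform bounds: given $c\in\mathcal{M}(A)_+$, the finiteness of $\mwdiv([\pi(c)])$ in $\Cu(\mathcal{M}(A)/A)$ yields, after an application of \axiomO{6}, an element $r=[b]$ with $b\in A_+$, and the finiteness of $\mwdiv([b])$ established above lets one also divide this ideal part. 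Following the bookkeeping of the two divisibility parameters exactly as in \autoref{prp:BoundDivExt} then gives $\mwdiv([c])<\infty$ for every $c\in\mathcal{M}(A)_+$, and a final appeal to \autoref{lma:NScaCarac} shows that $\mathcal{M}(A)$ is nowhere scattered.

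The principle underneath is that nowhere scatteredness passes to extensions, which one can also see directly from \cite[Theorem~3.1]{ThiVil21arX:NowhereScattered}: a hypothetical nonzero elementary ideal-quotient $J/K$ of $\mathcal{M}(A)$ is simple, so the closed ideal $(J\cap A+K)/K$ of $J/K$ is either $0$ or all of $J/K$; in the first case $J/K$ descends to a nonzero elementary ideal-quotient of $\mathcal{M}(A)/A$, while in the second it is isomorphic to the ideal-quotient $(J\cap A)/(K\cap A)$ of $A$, and either outcome contradicts the nowhere scatteredness of $\mathcal{M}(A)/A$ or of $A$. The genuine obstacle has already been overcome in \autoref{prp:CoronaNSca}, through the decomposition of a positive multiplier into two strictly convergent sums of divisible pieces afforded by \autoref{prp:KirRor} and \autoref{prp:StConvSumsDiv}; in the present argument the only point needing care is that the bounds of \autoref{prp:BoundDivExt} need not be uniform, as its proof uses nothing beyond the finiteness of $\mwdiv$ for the single classes $[\pi(c)]$ and $[b]$ that actually arise.
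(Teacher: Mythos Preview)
The paper's own proof is a one-liner: it combines \autoref{prp:CoronaNSca} with the fact that nowhere scatteredness passes to extensions, citing \cite[Proposition~4.2]{ThiVil21arX:NowhereScattered}. Your third paragraph reproduces exactly this extension principle via the ideal-quotient characterisation, and that part of your proposal is correct and matches the paper.

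Your main argument in the second paragraph, however, does not go through as written. You claim that running the proof of \autoref{prp:BoundDivExt} element by element gives $\mwdiv([c])<\infty$, justifying this by saying the proof only uses finiteness of $\mwdiv$ for ``the single classes $[\pi(c)]$ and $[b]$ that actually arise''. But there is no single $[b]$: in that proof the ideal element $r=[b]$ is obtained via \axiomO{6} from a particular choice $x'\ll x''\ll [c]$, and a different approximant $x'$ produces a different $b\in A_+$. Consequently the parameters $m_1,n_1$ attached to $[b]$, and with them the level $m_0m_1$ at which divisors are produced, depend on $x'$; you do not end up with a fixed $m$ for which $\wdiv_{2m}(m[c])<\infty$. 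If instead you feed in the weak $(2,\omega)$-divisibility of each such $[b]$ (available because $A$ is nowhere scattered) rather than a finite $\mwdiv$, the argument does show that $m_0[c]$ is weakly $(m_0+1,\omega)$-divisible for the fixed $m_0$ coming from $[\pi(c)]$, which suffices for \autoref{lma:NScaCarac}; but then you are simply re-proving the extension result of your third paragraph inside $\Cu$, not the stronger statement $\mwdiv([c])<\infty$.
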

\begin{proof}
 Using that nowhere scatteredness works well with extensions (\cite[Proposition 4.2]{ThiVil21arX:NowhereScattered}), the result is a direct consequence of \autoref{prp:CoronaNSca}.
\end{proof}

It follows from the results in \cite{PerRor04AFembeddings} that the multiplier algebra of a simple, separable \ca{} of real rank zero does not admit a character. \autoref{prp:FinNucDimMANSca}~(i) below generalizes this result.

\begin{thm}\label{prp:FinNucDimMANSca}
 Let $A$ be a $\sigma$-unital \ca{}. Assume that $A$ has 
 \begin{itemize}
  \item[(i)] real rank zero, or
  \item[(ii)] finite nuclear dimension, or
  \item[(iii)] $k$-comparison and a surjective rank map.
 \end{itemize}
 
 Then $A$ is nowhere scattered if and only if $\mathcal{M}(A)$ is nowhere scattered.
\end{thm}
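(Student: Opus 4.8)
The plan is to treat the two implications separately and to reduce the substantive one entirely to \autoref{thm:MainMA}. The implication from $\mathcal{M}(A)$ being nowhere scattered to $A$ being nowhere scattered requires none of the hypotheses (i)--(iii): since $A$ sits inside $\mathcal{M}(A)$ as an (essential) ideal and nowhere scatteredness passes to ideals by \cite[Proposition~4.2]{ThiVil21arX:NowhereScattered}, this direction is automatic for every $\sigma$-unital \ca{}.

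For the converse, assume $A$ is nowhere scattered. The key point is that in each of the three cases one obtains a \emph{uniform} divisibility bound over the scale, that is, constants $m,n\in\NN$ with $\wdiv_{2m}(m[a])\leq n$ for every $a\in A_+$. Such a uniform bound is stronger than the hypothesis of \autoref{thm:MainMA}, which only asks for a bound along each orthogonal sequence (here the same $m,n$ work for all orthogonal sequences at once); hence \autoref{thm:MainMA} immediately yields that $\mathcal{M}(A)$ is nowhere scattered. It therefore suffices to produce the bound case by case. In case~(i), \autoref{exa:exaDivRR0} gives $\wdiv_2([a])\leq 3$ for all $a\in A_+$, i.e.\ $m=1$ and $n=3$. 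In case~(ii), if $A$ has nuclear dimension at most $k$, then \autoref{prp:FNucDimImpMWDiv} gives $\wdiv_{4(k+1)}(2(k+1)[a])\leq 8(k+1)^2$, i.e.\ $m=2(k+1)$ and $n=8(k+1)^2$. In case~(iii), \autoref{prp:SurjMap} gives $\sdiv_{2(k+1)}((k+1)[a])\leq 4(k+1)^3$; since $(m,n)$-divisibility entails weak $(m,n)$-divisibility, one has $\wdiv_{2(k+1)}((k+1)[a])\leq\sdiv_{2(k+1)}((k+1)[a])\leq 4(k+1)^3$, i.e.\ $m=k+1$ and $n=4(k+1)^3$.

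The one point that will need care is the compatibility of case~(iii) with \autoref{prp:SurjMap}, which is phrased not under ``$A$ has a surjective rank map'' but under the formally stronger assumption that the separable sub-\ca{s} of $A$ admitting a surjective rank map form a $\sigma$-complete cofinal family. I would resolve this either by reading the surjective-rank-map hypothesis through such a cofinal system of separable subalgebras, so that the argument of \autoref{prp:SurjMap} transfers directly, or by specializing (as in the abstract) to the stable rank one setting, where \autoref{rmk:Sr1SurjMap} supplies the surjective rank map automatically for separable nowhere scattered algebras and the cofinality is unproblematic. Everything else is bookkeeping: matching the parameters $m,n$ in each case, and observing that a bound over the scale $A_+$ subsumes the orthogonal-sequence condition required to invoke \autoref{thm:MainMA}.
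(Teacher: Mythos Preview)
Your proposal is correct and follows essentially the same route as the paper: use \cite[Proposition~4.2]{ThiVil21arX:NowhereScattered} for the backward implication, and for the forward one invoke \autoref{exa:exaDivRR0}, \autoref{prp:FNucDimImpMWDiv}, and \autoref{prp:SurjMap} to obtain a uniform bound on $\wdiv_{2m}(m[a])$ over $A_+$, then apply \autoref{thm:MainMA}. Your observation about the hypothesis mismatch in case~(iii) is well taken---the paper's own proof glosses over this, and your proposed resolutions (reading the hypothesis through a $\sigma$-complete cofinal family, or specializing to stable rank one via \autoref{rmk:Sr1SurjMap}) are exactly the right way to reconcile the statement with \autoref{prp:SurjMap}.
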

\begin{proof}
 Assume that $\mathcal{M}(A)$ is nowhere scattered. \cite[Proposition 4.2]{ThiVil21arX:NowhereScattered} implies that $A$ is nowhere scattered as well.

 Conversely, assume that $A$ is nowhere scattered. It follows from \autoref{exa:exaDivRR0}, \autoref{prp:FNucDimImpMWDiv} and \autoref{prp:SurjMap} respectively that (i), (ii) and (iii) imply that every sequence of positive elements (orthogonal or not) in $A_+$ has uniformly bounded multiple divisibility. Thus, we can use \autoref{thm:MainMA} to get the desired result.
\end{proof}

\begin{qst}\label{qst:NSCaFinmdiv}
 Let $A$ be a nowhere scattered \ca{} such that $\mwdiv ([a])<\infty$ for every $a\in A_+$. Does it follow that $\mathcal{M}(A)$ is nowhere scattered?
 
 Note that this would imply that the simple \ca{} from \autoref{exa:simple} contains a $(2,\omega )$-divisible element of infinite divisibility.
\end{qst}

\begin{rmk}
 The assumption from \autoref{thm:MainMA} above is not equivalent to nowhere scatteredness, not even in the simple or $\sigma$-unital case:
 
 \begin{itemize} 
  \item[(1)] The example from \autoref{exa:product} is $\sigma$-unital. Thus, there exist $\sigma$-unital, nowhere scattered \ca{s} that do not satisfy the condition from \autoref{thm:MainMA}.
  
  \item[(2)] By \autoref{prp:SimpFinDiv}, the set of classes with finite divisibility is sup-dense in the Cuntz semigroup of a simple, non-elementary \ca{} $A$. However, and as made explicit in \autoref{exa:simple}, this is not enough for the multiplier algebra $\mathcal{M}(A)$ to be nowhere scattered.
 \end{itemize}
 
 Further, note that if the converse of \autoref{thm:MainMA} holds for some family of \ca{s}, one must have that any unital, nowhere scattered \ca{} $A$ in the family must satisfy $\sup_{a\in A_+}\wdiv_{2m} (m[a_i])<\infty$ for some $m$.
 \end{rmk}
 
 As another application of \autoref{prp:StConvSumsDiv}, we can study when multiplier algebras have a character:
 
\begin{prp}\label{prp:CharacMA}
 Let $A$ be a $\sigma$-unital \ca{}, and let $(e_i)_i$ be an approximate unit as in \autoref{prp:KirRor}. Assume that 
 \[
  \sup_i\sup_{0\leq \varepsilon \leq \varepsilon_0} \wdiv_{2m} (m[(e_i-e_{i-1}-\varepsilon)_+])<\infty
 \]
 for some $\varepsilon_0 >0$. 
 
 Then, $\mathcal{M}(A)$ does not admit a character.
\end{prp}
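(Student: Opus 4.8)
The plan is to show that the unit of $\mathcal{M}(A)$ has finite multiple weak divisibility, that is, $\mwdiv([1_{\mathcal{M}(A)}])<\infty$, and then to rule out a character by a functoriality obstruction in the Cuntz semigroup. First I would record the elementary structure of the approximate unit. Writing $f_n:=e_n-e_{n-1}$ (with $e_0=0$), the relation $e_{n+1}e_n=e_n$ from \autoref{prp:KirRor} yields, by a short induction together with self-adjointness, that $e_pe_q=e_{\min(p,q)}$ whenever $p\neq q$. Consequently $f_me_k=0$ for all $k\leq m-2$, and $f_mf_{m'}=0$ whenever $|m-m'|\geq 2$; in particular the odd-indexed family $(f_{2i-1})_i$ and the even-indexed family $(f_{2i})_i$ are each pairwise orthogonal. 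Since $\sum_n f_n=\lim_N e_N=1_{\mathcal{M}(A)}$ strictly, I would set
\[
 R_1:=\sum_i f_{2i-1},\qquad R_2:=\sum_i f_{2i},
\]
two strictly convergent orthogonal sums with $R_1+R_2=1_{\mathcal{M}(A)}$.

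Next I would verify the hypotheses of \autoref{prp:StConvSumsDiv} for each $R_j$. Reindexing $R_1$ as $\sum_i a_i$ with $a_i:=f_{2i-1}$, the orthogonality $f_m\perp e_k$ for $k\leq m-2$ gives $a_i\perp e_{i-1}$ for all $i$ (using $i-1\leq 2i-3$ for $i\geq 2$, and $f_1=e_1\perp e_0=0$); likewise $f_{2i}\perp e_{i-1}$, since $i-1\leq 2i-2$. Applying \autoref{prp:StConvSumsDiv} with the given $m$ and $\varepsilon_0$, and bounding the resulting suprema over the odd (respectively even) indices by the supremum over all indices, the hypothesis of the proposition yields
\[
 \wdiv_{2m}(m[R_1]),\ \wdiv_{2m}(m[R_2])\leq \sup_i\sup_{0\leq\varepsilon\leq\varepsilon_0}\wdiv_{2m}(m[(e_i-e_{i-1}-\varepsilon)_+])<\infty.
\]
Hence $\mwdiv([R_1]),\mwdiv([R_2])<\infty$, and \autoref{prp:DiagSumDiv} applied to $1_{\mathcal{M}(A)}=R_1+R_2$ gives $\mwdiv([1_{\mathcal{M}(A)}])<\infty$; that is, there is $M\in\NN$ with $\wdiv_{2M}(M[1_{\mathcal{M}(A)}])<\infty$.

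Finally I would deduce the absence of a character. Suppose $\chi\colon\mathcal{M}(A)\to\CC$ were a character. Then $\chi$ induces a \CuMor{} $\Cu(\chi)\colon\Cu(\mathcal{M}(A))\to\Cu(\CC)\cong\NNbar$ sending $[1_{\mathcal{M}(A)}]$ to $1$, hence $M[1_{\mathcal{M}(A)}]$ to $M$. The element $M[1_{\mathcal{M}(A)}]$ is compact, so weak $(2M,n)$-divisibility (with $n=\wdiv_{2M}(M[1_{\mathcal{M}(A)}])$) applied to $x':=M[1_{\mathcal{M}(A)}]\ll M[1_{\mathcal{M}(A)}]$ produces $y_1,\ldots,y_n$ with $M[1_{\mathcal{M}(A)}]\leq\sum_j y_j$ and $2My_j\leq M[1_{\mathcal{M}(A)}]$ for each $j$. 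Pushing these inequalities through the order- and addition-preserving map $\Cu(\chi)$ gives $2M\,\Cu(\chi)(y_j)\leq M$ in $\NNbar$, forcing $\Cu(\chi)(y_j)=0$ for every $j$, and hence $M\leq\sum_j\Cu(\chi)(y_j)=0$, a contradiction. Therefore $\mathcal{M}(A)$ admits no character.

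The main obstacle I anticipate is this last step. It is tempting to invoke the characterization of \cite[Corollary~5.6]{RobRor13Divisibility}, namely that a unital \ca{} has no character if and only if its unit is weakly $(2,\omega)$-divisible; but we only control a multiple $M[1_{\mathcal{M}(A)}]$ rather than $[1_{\mathcal{M}(A)}]$ itself, and passing to $M_M(\mathcal{M}(A))$ discards all information, since matrix algebras never carry characters. The functorial obstruction through $\Cu(\chi)$ is precisely what makes the multiple suffice. The other point requiring care is the bookkeeping behind the orthogonality conditions $a_i\perp e_{i-1}$ after the odd/even reindexing, which rests on the identity $e_pe_q=e_{\min(p,q)}$ for $p\neq q$.
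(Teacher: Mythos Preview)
Your proof is correct and follows the same route as the paper: split $1=\sum_i f_i$ into the odd and even subsums, apply \autoref{prp:StConvSumsDiv} to each, and conclude that $m[1_{\mathcal{M}(A)}]$ has finite weak $(2m)$-divisibility; the paper then simply cites \cite[Corollary~5.6]{RobRor13Divisibility}, whose relevant (easy) implication is precisely the functorial obstruction through $\Cu(\chi)$ that you spell out, so your worry about only controlling a multiple is unfounded. One small simplification: since the same $m$ works for both $R_1$ and $R_2$, you can bypass \autoref{prp:DiagSumDiv} and use \autoref{prp:Weak2wSum}(iii) (applied to $(R_1\otimes 1_m)+(R_2\otimes 1_m)$) to get $\wdiv_{2m}(m[1])\leq \wdiv_{2m}(m[R_1])+\wdiv_{2m}(m[R_2])<\infty$ directly, keeping the original $m$ rather than passing to an unspecified $M$.
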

\begin{proof}
 Using \autoref{prp:StConvSumsDiv}, we see that the unit $1=\sum_i (e_i - e_{i-1})$ in $\mathcal{M}(A)$ satisfies $\wdiv_{2m} (m[1])<\infty$.
 
 The result now follows from \cite[Corollary~5.6]{RobRor13Divisibility}.
\end{proof}

In particular, \autoref{prp:CharacMA} recovers \cite[Corollary~8.5]{RobRor13Divisibility}:

\begin{cor}\label{prp:CharacCharac}
 The product of unital \ca{s} $A_i$ does not admit a character whenever the supremum of $\wdiv_2 ([1_i])$ is finite.
\end{cor}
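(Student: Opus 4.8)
The plan is to present $\prod_i A_i$ as a multiplier algebra and then invoke \autoref{prp:CharacMA}. Set $A := \bigoplus_{i=1}^\infty A_i$, the $c_0$-direct sum. Since each $A_i$ is unital, $A$ is $\sigma$-unital: writing $p_i \in A$ for the element having $1_{A_i}$ in the $i$-th coordinate and $0$ elsewhere, the partial sums $e_n := p_1 + \cdots + p_n$ form an increasing approximate unit of positive contractions satisfying $e_{n+1}e_n = e_n$, exactly of the type used in \autoref{prp:KirRor}. Moreover, because each $A_i$ is unital we have $\mathcal{M}(A_i) = A_i$, and hence $\mathcal{M}(A) \cong \prod_i \mathcal{M}(A_i) = \prod_i A_i$, the identification already exploited in \autoref{exa:product}.

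For this approximate unit the differences are $f_i := e_i - e_{i-1} = p_i$, which are projections. Consequently, for every $0 \le \varepsilon < 1$ one has $(f_i - \varepsilon)_+ = (1-\varepsilon)p_i$, and since a positive scalar multiple has the same Cuntz class, $[(f_i - \varepsilon)_+] = [p_i]$. As $A_i$ sits inside $A$ as an ideal, the induced map $\Cu(A_i) \to \Cu(A)$ is an order-embedding onto a downward-closed subsemigroup, so both the class $[p_i] = [1_{A_i}]$ and its weak divisibility agree whether computed in $\Cu(A_i)$ or in $\Cu(A)$ (any witness $y_j$ with $2y_j \le [p_i]$ automatically lies in the ideal). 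Taking $m = 1$ and $\varepsilon_0 = 1/2$, the quantity governing \autoref{prp:CharacMA} therefore equals
\[
 \sup_i \sup_{0 \le \varepsilon \le \varepsilon_0} \wdiv_2\big([(f_i - \varepsilon)_+]\big) = \sup_i \wdiv_2([1_{A_i}]),
\]
which is finite by hypothesis. \autoref{prp:CharacMA} then gives that $\mathcal{M}(A) = \prod_i A_i$ admits no character, as claimed.

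There is no genuine obstacle here beyond bookkeeping; the proof is a direct specialization of \autoref{prp:CharacMA}. The two points deserving care are the identification $\mathcal{M}\big(\bigoplus_i A_i\big) \cong \prod_i A_i$ (valid precisely because the $A_i$ are unital), and the recognition that the natural approximate unit of $\bigoplus_i A_i$ has increments $f_i$ equal to the coordinate units $1_{A_i}$. Once these are in place, the divisibility hypothesis on the $[1_{A_i}]$ feeds verbatim (with $m = 1$) into the hypothesis of \autoref{prp:CharacMA}, so no further estimate is needed.
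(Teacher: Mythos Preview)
Your proof is correct and follows exactly the route the paper intends: the corollary is stated immediately after \autoref{prp:CharacMA} as a direct specialization, and you have supplied precisely the identification $\mathcal{M}\bigl(\bigoplus_i A_i\bigr)\cong\prod_i A_i$ together with the observation that the increments $f_i$ of the natural approximate unit are the coordinate units $1_{A_i}$, so the hypothesis of \autoref{prp:CharacMA} with $m=1$ reduces to $\sup_i\wdiv_2([1_{A_i}])<\infty$. The paper gives no further argument beyond invoking \autoref{prp:CharacMA}, so your write-up matches (and in fact makes explicit) the intended proof.
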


 \subsection{Stable \ca{s}}
 
 The multiplier algebra of a $\sigma$-unital, stable \ca{} $A$ has been studied extensively. For example, \cite{Ror91IdMult} studies when the corona algebra of $A$ is simple, while in \cite{WinNgNucDimCorFacProp} it is shown, using its multiplier algebra, that a nowhere scattered \ca{} of finite nuclear dimension has the corona factorization property.
 
 In what follows, we show that $\mwdiv ([a])$ must be finite for every $a\in A_+$ if $\mathcal{M}(A)$ is to be nowhere scattered; see \autoref{prp:MAStable}. Thus, if \autoref{qst:NSCaFinmdiv} has an affirmative answer, this would imply that the multiplier algebra of a $\sigma$-unital, stable \ca{} is nowhere scattered if and only if every element in the algebra has a multiple of finite divisibility.
 
\autoref{lma:WeakDivExa} below was stated for $(m,\omega )$-divisibility in \cite{ThiVil22arX:Glimm}. We now provide the analog statement for weak  $(m,\omega )$-divisibility, which only needs to assume the element (and not the whole Cuntz semigroup) to be divisible.

\begin{lma}\label{lma:WeakDivExa}
 Let $A$ be a \ca{}, and let $a\in (A\otimes \mathcal{K})_+$. Then, if $[a]$ is weakly $(m,\omega )$-divisible, there exists a sequence $(y_n)_n\subseteq \Cu (A)$ such that $my_j\leq [a]\leq \sum_{j=1}^\infty y_j$ for every $j\in\NN$.
\end{lma}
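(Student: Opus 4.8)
The plan is to reduce the statement directly to the definition of weak $(m,\omega)$-divisibility by writing $[a]$ as the supremum of a $\ll$-increasing sequence and then assembling the finitely many divisors produced at each stage into a single sequence. Write $x:=[a]$ and recall from the discussion of the Cuntz semigroup that $x=\sup_k x_k$ for some $\ll$-increasing sequence $(x_k)_k$ in $\Cu(A)$. Since $x_k\ll x_{k+1}\leq x$, we have $x_k\ll x$ for every $k$, so the hypothesis in \autoref{pgr:2OmegaDiv} applies to each $x_k$.

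First I would fix, for each $k$, an application of weak $(m,\omega)$-divisibility of $x$ at the element $x_k\ll x$: this produces a finite family $y_1^{(k)},\ldots ,y_{n_k}^{(k)}\in\Cu(A)$, with the length $n_k$ allowed to depend on $k$, such that $x_k\leq y_1^{(k)}+\ldots +y_{n_k}^{(k)}$ and $my_i^{(k)}\leq x$ for every $i\leq n_k$. I would then concatenate these finite families into a single sequence $(y_j)_j$, listing the block coming from $k=1$, then the block coming from $k=2$, and so on; let $N_k$ denote the index at which the $k$-th block ends, with $N_0=0$.

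The two required properties then follow almost immediately. Every term $y_j$ of the concatenated sequence equals some $y_i^{(k)}$, so $my_j\leq x$ holds for all $j$ by construction. For the remaining inequality, note that $\sum_{j=1}^\infty y_j$ is by definition the supremum of the increasing sequence of partial sums, which exists in $\Cu(A)$. Adding the remaining non-negative terms gives
\[
 x_k\leq \sum_{j=N_{k-1}+1}^{N_k} y_j\leq \sum_{j=1}^{N_k} y_j\leq \sum_{j=1}^\infty y_j
\]
for every $k$, so taking the supremum over $k$ yields $x=\sup_k x_k\leq \sum_{j=1}^\infty y_j$, as desired.

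I do not expect a serious obstacle here; the only points that require care are organizational. One must invoke the hypothesis at elements $x_k$ that are genuinely way-below $x$, so that the definition applies and the multiplicities $n_k$ are permitted to grow with $k$, and one must phrase the infinite sum correctly as the supremum of its partial sums so that it is the single inequality $x\leq \sum_j y_j$ that is verified, rather than a statement about a common multiple. This last point is precisely what distinguishes the weak case from the $(m,\omega)$-divisible case of \cite{ThiVil22arX:Glimm}, where the $y_j$ could be taken equal and a single divisor suffices.
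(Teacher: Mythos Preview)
Your argument is correct. The two required conditions follow exactly as you describe: each $y_j$ is one of the $y_i^{(k)}$, so $my_j\leq x$, and since the $k$-th block already dominates $x_k$, the full supremum of partial sums dominates $\sup_k x_k=x$.

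The paper's proof reaches the same conclusion by a slightly more elaborate route. Rather than applying weak $(m,\omega)$-divisibility of $x$ directly at each $x_k\ll x$, the paper uses \axiomO{5} at each stage to split off a ``remainder'' $c$ with $x_{n}+c\leq [a]\leq x_{n+1}+c$ (up to an index shift), and then applies divisibility to $c$ rather than to $x$ itself. The resulting divisors at stage $n+1$ are therefore tailored to cover only the increment from $x_n$ to $x_{n+1}$; the partial sums $\sum_{k\leq n}\sum_j y_{k,j}$ already dominate $x_n$ without the redundancy present in your concatenation. This more economical decomposition is the natural adaptation of the argument for genuine $(m,\omega)$-divisibility in \cite{ThiVil22arX:Glimm}, where one wants a \emph{single} repeated divisor and must keep careful track of complements. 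For the weak version stated here, however, no such bookkeeping is needed: any countable family with $my_j\leq x$ whose tail sums eventually dominate every $x_k$ will do, and your direct concatenation produces one with less effort.
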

\begin{proof}
 Let $\varepsilon_n$ be a strictly decreasing sequence converging to $0$, and let $x_n := [(a-\varepsilon_n )_+]$. We will inductively find elements $y_{k,j} ,c_n$ such that 
 \[
  x_n\leq \sum_{k=1}^{n}\sum_{j}y_{k,j},\andSep 
  my_{k,j}\leq [a]. 
 \]
 
 First, use weak $(m,\omega )$-divisibility to obtain $y_{1,j}$ such that $x_1\leq \sum_j y_{1,j}$ and $my_{1,j}\leq [a]$.
 
 Assume now that we have proven the result for every $k\leq n$. Using \axiomO{5} at $x_{n-1}\ll x_n\leq [a]$, we obtain $c\in\Cu (A)$ such that $x_{n-1}+c\leq [a]\leq x_n +c$. In particular, one has $x_{n+1}\ll [a]\leq x_n +c$. Take $c'\ll c$ such that $x_{n+1}\leq x_n + c'$.
 
 Using $(m,\omega )$-divisibility at $c'\ll c\leq x$, we obtain elements $y_{n+1,j}$ such that $c'\leq \sum_j y_{n+1,j}$ and $my_{n+1,j}\leq x$ for each $j$. We have 
 \[
  x_{n+1}\leq x_n + c'\leq \sum_{k=1}^{n}\sum_{j}y_{k,j} + \sum_j y_{n+1,j} = 
  \sum_{k=1}^{n+1}\sum_{j}y_{k,j},
 \]
 as desired.
 
 By taking the supremum on $n$, one sees that the sequence $(y_{n,j})_{n,j}$ satisfies the desired conditions.
\end{proof}
 
 Note that, in general, one cannot adapt the previous proof to show that there exist $y_1,\ldots ,y_m$ with  $2y_j\leq [a]\leq y_1+\ldots +y_m$ whenever $[a]$ is $(2,m)$-divisible. 
 
 However, this is the case for $\sigma$-unital \ca{s} whose multiplier algebra is nowhere scattered: 
 
 \begin{thm}\label{prp:MAStable}
  Let $A$ be a $\sigma$-unital, stable \ca{}. Assume that $\mathcal{M}(A)$ is nowhere scattered. Then, for every $a\in A_+$ and $m\in\NN$ there exist finitely many elements $y_1,\ldots ,y_n\in\Cu (A)$ such that 
  \[
   my_j\leq [a]\leq y_1+\ldots +y_n
  \] 
  for every $j\leq n$.
  
  In particular, $\mwdiv (a)<\infty$ for every $a\in A_+$.
 \end{thm}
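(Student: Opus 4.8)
The plan is to use stability to create infinitely many orthogonal copies of $a$ inside $A$, to transport onto the single class $[a]$ the divisibility that $\mathcal{M}(A)$ enjoys, and to read off finiteness from the fact that the ``diagonal at infinity'' becomes a genuine element of the multiplier algebra. Concretely, since $A$ is stable I fix isometries $(s_i)_i$ in $\mathcal{M}(A)$ with orthogonal ranges and $\sum_i s_i s_i^*=1$ strictly, set $a_i:=s_i a s_i^*$ and $R:=\sum_i a_i = a\otimes 1_\infty\in\mathcal{M}(A)_+$ (the sum converging strictly by \autoref{prp:KirRor}). Each $a_i$ is Cuntz equivalent to $a$, the $a_i$ are pairwise orthogonal, and $[R]=\infty[a]$ in $\Cu(\mathcal{M}(A))$. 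Throughout I will use that, by \cite{CiuRobSan10CuIdealsQuot}, $\Cu(A)$ is an ideal of $\Cu(\mathcal{M}(A))$, hence downward hereditary: anything dominated by a class coming from $A_+$ again lies in $\Cu(A)$. This is what keeps the divided pieces inside $\Cu(A)$, since any $y$ with $my\leq[a]$ automatically satisfies $y\in\Cu(A)$.

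First I would record the soft reductions. As $\mathcal{M}(A)$ is nowhere scattered, every class in $\Cu(\mathcal{M}(A))$ is weakly $(m,\omega)$-divisible (\autoref{pgr:2OmegaDiv}); in particular $[R]$ is. Transporting a fixed division of $[a]$ through each corner $s_i(\cdot)s_i^*$ gives, for every $i$, a sequence $(u_k^{(i)})_k\subseteq\Cu(A)$ with $m u_k^{(i)}\leq [a_i]=[a]$ and $[a_i]\leq\sum_k u_k^{(i)}$; preservation of $\precsim$ under countable orthogonal direct sums then yields $[R]\leq\sum_{i,k}u_k^{(i)}$ with every piece satisfying the \emph{sharp} bound $m u_k^{(i)}\leq[a]$. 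Note that covering a fixed cut-down is easy: if $\eps>0$ then $[(a-\eps)_+]\ll[a]\leq[R]\leq\sum_{i,k}u_k^{(i)}$, so $[(a-\eps)_+]$ already lies below a finite partial sum. The obstruction is therefore neither keeping pieces in $\Cu(A)$ nor the sharp divisibility bound along this route; it is producing a \emph{uniform} finite count that covers $[a]$ itself.

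The heart of the argument is exactly this uniform finiteness, and here the multiplier algebra must do the work that $A$ alone cannot (recall that an element of a stable nowhere scattered $A$ may well have $\wdiv_m([a])=\infty$, e.g.\ the diagonal class in $(\oplus_i A_i)\otimes\Cpct$). The mechanism I expect to use is that $[R]=\infty[a]$ is \emph{compact} in $\Cu(\mathcal{M}(A))$ precisely when $\mathcal{M}(A)$ is nowhere scattered: realizing $\infty[a]$ by the honest element $R=a\otimes 1_\infty$, one checks $[R]=[(R-\eps)_+]$, whence $[R]\ll[R]$. Then \autoref{rmk:FinDivProj} upgrades weak $(m,\omega)$-divisibility of the compact class $[R]$ to $\wdiv_m([R])<\infty$, i.e.\ a \emph{uniform} finite division of $[R]$. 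Combining this uniform division of $[R]$ with the sharp per-copy divisions above, and using \axiomO{5} and \axiomO{6} together with the ideal $\Cu(A)\subseteq\Cu(\mathcal{M}(A))$ to split pieces along the orthogonal decomposition $R=\bigoplus_i a_i$, I would extract finitely many $y_1,\dots,y_n\in\Cu(A)$ with $m y_j\leq[a]$ and $[a]\leq y_1+\dots+y_n$. This step is, in effect, a reversal of \autoref{prp:StConvSumsDiv}, where the isometries $s_i$ replace the strict sums used there.

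The main obstacle is the passage from a division of the large class $[R]$—whose pieces only satisfy $m(\cdot)\leq[R]=\infty[a]$—to pieces localized to a single copy with $m(\cdot)\leq[a]$, while simultaneously retaining finiteness. A naive compression by $s_1$ does not preserve $\precsim$, and regrouping the infinite per-copy division destroys the bound $m(\cdot)\leq[a]$; the two requirements (pieces small enough that $m$ of them fit in one $[a]$, yet finitely many covering all of $[a]$) pull in opposite directions, and reconciling them is where stability and nowhere scatteredness of $\mathcal{M}(A)$ are used in tandem. Once the displayed finite division is obtained, the final clause is immediate: taking $m=2$ gives $\wdiv_2([a])\leq n<\infty$, and hence $\mwdiv([a])\leq\wdiv_2([a])<\infty$.
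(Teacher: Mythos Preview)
Your argument has a genuine gap at its core: the claim that $[R]=[(R-\eps)_+]$, and hence that $[R]=\infty[a]$ is compact in $\Cu(\mathcal{M}(A))$, is false in general. Since the summands $s_ias_i^*$ are pairwise orthogonal, one has $(R-\eps)_+=\sum_i s_i(a-\eps)_+s_i^*$, so $[(R-\eps)_+]=\infty[(a-\eps)_+]$. For this to equal $\infty[a]$ one needs $a$ to lie in the ideal of $\mathcal{M}(A)$---equivalently, of $A$, since $A\lhd\mathcal{M}(A)$---generated by $(a-\eps)_+$. That fails, for instance, when $a$ is a strictly positive element of $A=C_0(0,1]\otimes\mathcal{Z}\otimes\mathcal{K}$, a $\sigma$-unital, stable, nowhere scattered algebra whose multiplier algebra is nowhere scattered by \autoref{prp:FinNucDimMANSca}: here $(a-\eps)_+$ generates a proper ideal of $A$ for every $\eps>0$. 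Thus $[R]$ is not compact, \autoref{rmk:FinDivProj} does not apply, and you cannot upgrade weak $(m,\omega)$-divisibility of $[R]$ to $\wdiv_m([R])<\infty$. Nowhere scatteredness of $\mathcal{M}(A)$ plays no role here; this is a spectral obstruction on $R$ itself. The subsequent ``extraction'' step, which you yourself flag as the main obstacle, is therefore moot---but note that even granting compactness you have not actually indicated how \axiomO{5}--\axiomO{6} would produce pieces with $m(\cdot)\leq[a]$ rather than $m(\cdot)\leq\infty[a]$.

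The paper avoids all of this by invoking \cite[Proposition~2.8]{AntPerRobThi18arX:CuntzSR1}: for $\sigma$-unital stable $A$ and $a\in A_+$ there is a genuine \emph{projection} $p_a\in\mathcal{M}(A)$ with the property that, for $x\in\Cu(A)$, one has $x\leq[a]$ if and only if $x\leq[p_a]$ in $\Cu(\mathcal{M}(A))$. Since $[p_a]$ is compact, nowhere scatteredness plus \autoref{rmk:FinDivProj} give $\wdiv_m([p_a])<\infty$. From a finite division $[a]\leq[p_a]\leq z_1+\dots+z_n$ with $mz_j\leq[p_a]$ one sets $y_j:=z_j\wedge\infty[a]\in\Cu(A)$ and reads off $my_j\leq[a]$ directly from the defining property of $p_a$. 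The passage from ``pieces bounded by $[p_a]$'' to ``pieces bounded by $[a]$'' is built into $p_a$; no extraction along orthogonal copies is needed.
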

 \begin{proof}
  Let $a\in A_+$ and $m\in\NN$. By \cite[Proposition~2.8]{AntPerRobThi18arX:CuntzSR1} there exists a projection $p_a\in \mathcal{M}(A)$ such that, for every $x\in\Cu (A)$, one has $x\leq [a]$ if and only if $x\leq [p_a]$ in $\Cu (\mathcal{M}(A))$.
  
  Assume that $\mathcal{M}(A)$ is nowhere scattered. Thus, $\Cu (\mathcal{M}(A))$ is weakly $(m,\omega )$-divisible by \cite[Theorem~8.9]{ThiVil21arX:NowhereScattered}. It follows from \autoref{rmk:FinDivProj} that $\wdiv_m ([p])<\infty$ for every projection $p\in \mathcal{M}(A)$.
  
  Then, since we have $[a]\leq [p_a]\ll [p_a]$, there exist elements $z_1,\ldots ,z_n\in \Cu (\mathcal{M}(A))$ such that 
  \[
   [a]\leq [p_a]\leq z_1+\ldots +z_n,\andSep mz_j\leq [p_a]
  \]
 for each $j\leq n$.
 
 Since $\mathcal{M}(A)$ is a \ca{}, we know from \cite[Remark~2.6]{AntPerRobThi21Edwards} that the infimum $y_j:=z_j\wedge \infty [a]$ exists for each $j$. Any representative of this element is contained in the ideal generated by $a$ and, therefore, must be a positive element in $A$. Thus, we have 
 $[a]\leq y_1+\ldots +y_n$ in $\Cu (\mathcal{M}(A))$ with $y_j\in \Cu (A)$ for each $j$.
 
 Further, we also get $my_j=m(z_j\wedge \infty [a])\leq [p_a]$. Thus, $my_j\leq [a]$ in $\Cu (\mathcal{M}(A))$.
 
 Finally, note that, since $A$ is an ideal of $\mathcal{M}(A)$, a pair of elements in $A$ are Cuntz subequivalent in $\mathcal{M}(A)$ if and only if they are Cuntz subequivalent in $A$; see, for example, \cite[Proposition~2.18]{Thi17:CuLectureNotes}.
 
 Consequently, we have 
 \[
  [a]\leq y_1+\ldots +y_n,\andSep 
  my_j\leq [a]
 \]
 in $\Cu (A)$, as desired.
 \end{proof}

\begin{rmk}
 Note that there exist stable, nowhere scattered \ca{s} with a multiplier algebra that is not nowhere scattered. Indeed, simply take $A$ as in \autoref{exa:DirSumInfDiv}. Then, $A\otimes \mathcal{K}$ is a nowhere scattered \ca{} by \cite[Proposition 4.12]{ThiVil21arX:NowhereScattered} that contains a $(2,\omega )$-divisible element of infinite divisibility.
 
 It follows from \autoref{prp:MAStable} that $\mathcal{M}(A\otimes \mathcal{K})$ cannot be nowhere scattered.
\end{rmk}

In this paper we have studied conditions under which a multiplier algebra $\mathcal{M}(A)$ is nowhere scattered. A problem that seems to be much more involved is to determine when $\mathcal{M}(A)$ has the Global Glimm Property.

\begin{qst}
 Assume that $A$ has the Global Glimm Property. When does $\mathcal{M}(A)$ also have the Global Glimm Property?
\end{qst}

Note that this has a positive solution whenever $\sup_{a\in A_+}\sdiv_{2m}(m[a])<\infty$ and $\mathcal{M}(A)/A$ satisfies a condition that ensures that the Global Glimm Problem can be answered affirmatively. For example, Lin and Ng show in \cite{LinNg16CoronaStableZ} that $\mathcal{M}(\mathcal{Z}\otimes\mathcal{K})/\mathcal{Z}\otimes\mathcal{K}$ has real rank zero. Thus, since $\mathcal{Z}\otimes\mathcal{K}$ is covered by \autoref{thm:MainMA}, we get that $\mathcal{M}(\mathcal{Z}\otimes\mathcal{K})/\mathcal{Z}\otimes\mathcal{K}$ is nowhere scattered. By \cite{EllRor06Perturb} (see also \cite[Proposition~7.4]{ThiVil22arX:Glimm}), $\mathcal{M}(\mathcal{Z}\otimes\mathcal{K})/\mathcal{Z}\otimes\mathcal{K}$ has the Global Glimm Property. Consequently, $\mathcal{M}(\mathcal{Z}\otimes\mathcal{K})$ also has the Global Glimm Property by \cite[Theorem~3.10]{ThiVil22arX:Glimm}.

\providecommand{\etalchar}[1]{$^{#1}$}
\providecommand{\bysame}{\leavevmode\hbox to3em{\hrulefill}\thinspace}
\providecommand{\noopsort}[1]{}
\providecommand{\mr}[1]{\href{http://www.ams.org/mathscinet-getitem?mr=#1}{MR~#1}}
\providecommand{\zbl}[1]{\href{http://www.zentralblatt-math.org/zmath/en/search/?q=an:#1}{Zbl~#1}}
\providecommand{\jfm}[1]{\href{http://www.emis.de/cgi-bin/JFM-item?#1}{JFM~#1}}
\providecommand{\arxiv}[1]{\href{http://www.arxiv.org/abs/#1}{arXiv~#1}}
\providecommand{\doi}[1]{\url{http://dx.doi.org/#1}}
\providecommand{\MR}{\relax\ifhmode\unskip\space\fi MR }
\providecommand{\MRhref}[2]{%
  \href{http://www.ams.org/mathscinet-getitem?mr=#1}{#2}
}
\providecommand{\href}[2]{#2}

\end{document}